
\documentclass[11pt,pdflatex, makeidx]{amsart}
\usepackage[T1]{fontenc}
\usepackage{mathpazo}
\usepackage{bm,mathrsfs}
\usepackage{enumerate}
\usepackage[table]{xcolor}
\usepackage{setspace}
\usepackage{ytableau}
\usepackage{bbding}
\usepackage{float}\restylefloat{table} 

\usepackage{color}
\usepackage[color,matrix,arrow]{xy}

\usepackage{tikz}
\usetikzlibrary{arrows, intersections, calc}
\usepackage{comment}
\usepackage{makecell}
\usepackage{mathtools}
\usepackage{xfrac}

\usepackage{bbold}

\numberwithin{equation}{section}

\setcounter{tocdepth}{3}

\let\oldtocsection=\tocsection

\let\oldtocsubsection=\tocsubsection

\let\oldtocsubsubsection=\tocsubsubsection

\renewcommand{\tocsection}[2]{\hspace{0em}\bfseries\oldtocsection{#1}{#2}}
\renewcommand{\tocsubsection}[2]{\hspace{1em}\oldtocsubsection{#1}{#2}}
\renewcommand{\tocsubsubsection}[2]{\hspace{2em}\oldtocsubsubsection{#1}{#2}}


\usepackage{pdfpages} 
\usepackage[colorlinks=true, linkcolor=blue]{hyperref}
\usepackage{multicol} 

\usepackage{ulem}
\usepackage{amssymb}
\usepackage{graphicx, epsfig}
\usepackage{latexsym, amsfonts, amscd, amsmath}
\usepackage{mathrsfs}
\usepackage{lscape}
\usepackage{wrapfig}
\usepackage{tabularx}\usepackage{booktabs} 
\makeatletter
\def\hlinewd#1{%
\noalign{\ifnum0=`}\fi\hrule \@height #1 %
\futurelet\reserved@a\@xhline}
\makeatother

\makeindex \setcounter{tocdepth}{3}
\input xy
\xyoption{all}
\xyoption{rotate}\xyoption{pdf}
\usepackage{wallpaper}

 \voffset = -60pt \hoffset = -60pt \textwidth =
460pt \textheight =660pt \headheight = 12pt \headsep = 20pt

\definecolor{orange}{rgb}{1,0.5,0}
\definecolor{Indigo}{rgb}{0.2,0.1,0.7}
\definecolor{Violet}{rgb}{0.5,0.1,0.7}

\definecolor{gold}{rgb}{1, 0.87498,0.4}


\newtheorem{thm}{Theorem}[subsection]
\newtheorem{prop}[thm]{Proposition}
\newtheorem{lem}[thm]{Lemma}
\newtheorem{cor}[thm]{Corollary}

\theoremstyle{definition}

\newtheorem{exa}[thm]{Example}

\theoremstyle{remark}
\newtheorem{rmk}[thm]{Remark}


\DeclareMathOperator{\Aut}{Aut}

\DeclareMathOperator{\diag}{{{diag}}}

\DeclareMathOperator{\End}{{{End}}}
\DeclareMathOperator{\Fr}{{{Fr }}}
\DeclareMathOperator{\Hom}{{{Hom}}}
\DeclareMathOperator{\Id}{{{Id}}}
\DeclareMathOperator{\Ima}{{{Im}}}

\DeclareMathOperator{\Jac}{{{Jac }}}
\DeclareMathOperator{\Ker}{{{Ker}}}

\DeclareMathOperator{\Nm}{{{Nm }}}

\DeclareMathOperator{\ord}{{{ord }}}

\DeclareMathOperator{\Spec}{{{Spec }}}

\DeclareMathOperator{\Tr}{{{Tr }}}

\DeclareMathOperator{\Ver}{{{Ver }}}

\DeclareMathOperator{\vol}{vol}

\DeclareMathOperator{\SL}{{{SL }}}
\DeclareMathOperator{\GL}{{{GL}}}

\DeclareMathOperator{\Gal}{{{Gal}}}



\newcommand{\gera}{{\frak{a}}}
\newcommand{\gerb}{{\frak{b}}}
\newcommand{\gerc}{{\frak{c}}}
\newcommand{\gerd}{{\frak{d}}}

\newcommand{\gerf}{{\frak{f}}}
\newcommand{\gerg}{{\frak{g}}}

\newcommand{\gerl}{{\frak{l}}}

\newcommand{\gerp}{{\frak{p}}}

\newcommand{\gerH}{{\frak{H}}}


\newcommand{\uA}{{\underline{A}}}
\newcommand{\uB}{{\underline{B}}}


\newcommand{\calM}{{\mathcal{M}}}

\newcommand{\calO}{{\mathcal{O}}}

\newcommand{\calT}{{\mathcal{T}}}

\newcommand{\calW}{{\mathcal{W}}}

\def\AA{\mathbb{A}}

\def\CC{\mathbb{C}}

\def\FF{\mathbb{F}}
\def\GG{\mathbb{G}}

\def\KK{\mathbb{K}}

\def\PP{\mathbb{P}}
\def\QQ{\mathbb{Q}}
\def\RR{\mathbb{R}}

\def\TT{\mathbb{T}}

\def\ZZ{\mathbb{Z}}


\newcommand{\scrB}{{\mathscr{B}}}

\newcommand{\scrE}{{\mathscr{E}}}

\newcommand{\scrG}{{\mathscr{G}}}

\newcommand{\scrP}{{\mathscr{P}}}
\newcommand{\scrQ}{{\mathscr{Q}}}

\newcommand{\scrS}{{\mathscr{S}}}


\newcommand{\id}{{\noindent}}


\newcommand{\arr}{{\; \rightarrow \;}}

\newcommand{\injects}{{\; \hookrightarrow \;}}

\newcommand{\ol}{{\mathcal{O}_L}}

\newcommand{\fpbar}{\overline{\mathbb{F}}_p}


\DeclareMathOperator{\disc}{disc}

\newcommand{\g}{\alpha}
\newcommand{\h}{\beta}
\newcommand{\x}{s}

\begin{document}
\marginparwidth 50pt

\title[Supersingular curves]{
Supersingular elliptic curves, quaternion algebras and applications to cryptography
}
\author{Eyal Z. Goren \& Jonathan Love}
\address{Department of Mathematics and Statistics,
McGill University, 805 Sherbrooke St. W., Montreal H3A 2K6, QC,
Canada.}
\email{eyal.goren@mcgill.ca; jon.love@mcgill.ca}
\subjclass{14G50, 11H55, 11R52 (Primary)}

\begin{abstract} This paper contains a survey of supersingular isogeny graphs associated to supersingular elliptic curves and their various applications to cryptography. Within limitation of space, we attempt to address a broad audience and make this part widely accessible. For those graphs we also present three recent results and sketch their proofs. We then discuss a generalization to superspecial isogeny graphs associated to superspecial abelian varieties with real multiplication. These graphs were introduced by Charles, Goren and Lauter and so our discussion is brief. Motivated by their cryptographic applications, we prove a general theorem concerning generation of lattices over totally real fields by elements of specified norm. Throughout the paper we have attempted to clarify certain considerations that are either vaguely stated in the literature, or are folklore. We hope this paper will be useful both to a novice wishing to familiarize themselves with this very active area, and to the expert who may enjoy some vignettes and an overview of some new results. 
\end{abstract}

\maketitle
{\small{
\tableofcontents
}}

\section{Introduction}

\id In recent years the use of supersingular elliptic curves in cryptography has been an active field of research and at various times some cryptographic primitives based on supersingular elliptic curves, such as the CGL hash function, the SIKE key-exchange protocol or  the SQIsign digital signature protocol, have been strong contenders for post-quantum cryptography protocols. This created a fertile playing field in which results from arithmetic geometry, some more than a century old, others very recent, were brought to bear on questions from mathematical cryptography. More recently, higher dimensional superspecial graphs have been brought into this area. For one, they offer a generalization of the cryptographic primitives previously mentioned; in addition, recent devastatingly successful attacks on the SIKE protocol can be viewed as happening within such superspecial graphs. The introduction of superspecial graphs raises and motivates many interesting questions concerning the arithmetic of higher dimensional abelian varieties and the study of their moduli spaces and endomorphism rings. Although this is a well-developed area already, the questions raised in this context are new and of much mathematical interest. 

This paper offers a fairly thorough introduction to supersingular isogeny graphs and provides an overview of several applications of supersingular elliptic curves to cryptography. In this part of the paper, we have attempted to reach a somewhat general audience: we assume a background in algebraic number theory at the level of a first-year graduate course, as well as some basic exposure to algebraic geometry. As our goal is to reach people who are new to the subject, the presentation includes a considerable amount of background material; many proofs are omitted, but in these cases references are provided at the beginning of each relevant section.

We also present three recent theorems on these topics, motivated by cryptographic applications but also of independent interest. As the results have appeared in print already with complete proofs, we only provide a sketch of the proofs here, for completeness. These are Theorems~\ref{thm: Mayo}, \ref{thm: generation of orders} and \ref{thm: theta determines E} in the article, where the first is due to S. Mayo, and the last two to the authors of the present paper. 
Theorem~\ref{thm: Mayo} states, roughly and under some conditions, that the only non-trivial automorphism of a supersingular isogeny graph is an involution that comes from the Frobenius map. Theorem~\ref{thm: generation of orders} states that endomorphism rings of supersingular elliptic curves in characteristic~$p$ are generated by isogenies of order in the set $\{\ell^k: k = 0, 1, 2, 3\dots\}$, where $\ell$ is any fixed prime different from $p$. (In fact, Theorem~\ref{thm: generation of orders} follows from Theorem~\ref{thm: special generators for a lattice}, that treats generation of general lattices by elements of specified norms.) Finally, Theorem~\ref{thm: theta determines E} asserts that supersingular elliptic curves $E$ are determined up to Frobenius transform by their theta function $\Theta_E(q):=\sum_{n = 0}^\infty r_E(n) q^n$, where $r_E(n)$ is the number of endomorphisms of $E$ of norm $n$. 
The material in this part of the paper cannot be viewed as original anymore, but along the way we took care to clarify some points that are hard to find explicitly, or in clear form, in the literature. We do however include one new application of Theorem~\ref{thm: special generators for a lattice} to the theory of even unimodular lattices (Corollary~\ref{cor: even unimodular}).

\vspace{0.2cm}

\noindent Following that, we pick up the pace and assume more background from the reader, since keeping the same pace of exposition would have more than doubled the length of the paper. Following \cite{CGL2}, we introduce superspecial isogeny graphs associated to superspecial abelian varieties with real multiplication. For real quadratic fields, these so-called RM isogeny graphs seem to be interpolating between the graphs associated to elliptic curves and those associated to abelian surfaces; for instance, they retain many properties of the elliptic curve graphs, such as the Ramanujan property, that the more commonly studied graphs of abelian surfaces do not. They raise several attractive questions that we hope some of the readers will pick up; one of them being the actual implementation of cryptographic primitives based on superspecial abelian varieties with real multiplication; another is to compare the computational complexity of walking supersingular isogeny graphs of elliptic curves, of (say) superspecial abelian surfaces with real multiplication and of superspecial principally polarized abelian surfaces. Yet another is to generalize all of that for totally real fields of arbitrary class number.

In the last part of the paper, motivated by possible cryptographic applications, we generalize Theorem~\ref{thm: special generators for a lattice}, and so Theorem~\ref{thm: generation of orders}, to lattices over totally real fields. These appear in the text as Theorems \ref{thm: totally real local-global generators} and \ref{thm: End generation totally real}, respectively. Despite generalizing our results over $\QQ$, the argument is constructed differently. We make use of inhomogeneous theta functions and the theta correspondence, as well as results on the asymptotic growth of Fourier coefficients. We thus stand on the shoulders of giants, as we use deep results by Shimura, Siegel-Weil, and Deligne. Besides their application to endomorphism rings of superspecial abelian varieties with real multiplication, we expect such theorems to have other lovely applications, for example of the sort we provide for even unimodular lattices over $\ZZ$.

\medskip

\id A precise description of the contents of this paper can be gleaned from the detailed table of contents.

\medskip

\id {\bf Acknowledgments.} The first named author was supported by NSERC grant 223148. The second author was a CRM-ISM 
postdoctoral  fellow  at McGill University 
when this research was done. 
The first author wishes to thank Professors Tony Shaska and Shaul Zemel for the invitation to attend the inspiring  \textit{Advanced Research Workshop: Isogeny based post-quantum cryptography} they organized in the Hebrew University of Jerusalem, July 29-31, 2024, and both authors are grateful to them for the opportunity to submit a paper to the proceedings. 

\

\section{Quaternion algebras and supersingular elliptic curves}
\subsection{Quaternion algebras} \label{sec: quat alg basics}
In this discussion we provide only the mere definitions and results that would assist in reading the rest of the paper. For more background, a general discussion of quaternion algebras can be found in \cite{Vigneras} and \cite{Voight}.  

Let $L$ be a number field, or one of its completions; thus, $\RR$, $\CC$, or a finite extension of $\QQ_p$ for some prime $p$. A {\bf quaternion algebra} $B$ over $L$ is an $L$-algebra that, as a vector space of dimension $4$ over $L$, has a basis $1, i, j, k$:
\[ B = L 1 \oplus Li \oplus Lj \oplus Lk, \]
where there exists some non-zero $\alpha, \beta \in L$ such that 
\[ i^2 = \alpha,\;\; j^2 = \beta, \;\;ij = -ji = k.\]
One refers to such an algebra by the symbol $\left( \frac{\alpha, \beta}{L}
\right)$.

On any such quaternion algebra we have the {\bf canonical involution} which can be written in terms of the basis as 
\[b = x+ yi + zj + wk \quad \mapsto\quad  \bar b = x- yi - zj - wk\]
and satisfies $\overline{ab} = \bar b \cdot \bar a$ for $a,b\in B$.
We also define the reduced {\bf trace}, which is an $L$-linear map $\Tr\colon B \arr L$, and the reduced {\bf norm}, which is a multiplicative homomorphism $\Nm\colon B^\times \arr L^\times$. These are defined for $b=x+yi+zj+wk\in B$ by
\renewcommand\arraystretch{1.5}
\[ \begin{array}{rcccl}
	\Tr(b) &=& b+\bar b & =& 2x,\\
	\Nm(b) &=& b\cdot\bar b & =& x^2 + \alpha y^2 + \beta z^2 + \alpha\beta w^2.
\end{array} \]
The norm is a quadratic form on $B$, and we can associate to it a bilinear form $(a, b) = \frac12\Tr(a\bar b)$ satisfying the relations
\begin{align*}
	\Nm(a)&=(a,a), & (a,b)&=\frac12\left(\Nm(a+b)-\Nm(a)-\Nm(b)\right).
\end{align*}
Note that some sources use a different normalization for the bilinear form.

\subsubsection{Ramification and splitting} A quaternion algebra $B$ over $L$ is called {\bf split} if $B$ is isomorphic as an $L$-algebra to the $2\times 2$ matrix ring $M_2(L)$. Otherwise it is called {\bf ramified} and in this case it is a division algebra. If $L= \CC$, then every quaternion algebra is split; if $L$ is equal to $\RR$, then the only ramified quaternion algebra is the Hamilton quaternions $\left( \frac{-1, -1}{\RR}\right)$. If $L$ is a finite extension of~$\QQ_p$ and~$L_2$ its unramified quadratic extension with $\langle \sigma \rangle = \Gal(L_2/L)$, then the only ramified quaternion algebra over $L$ is isomorphic to

\[ \left\{ \left(\begin{matrix}a & \pi b \\
b^\sigma & a^\sigma
\end{matrix}\right): a, b \in L_2 \right\},\]
where $\pi$ is a uniformizer of $L$.

If $\varphi \colon L \arr M$ is a field homomorphism and $B$ is a quaternion algebra over $L$, then $B\otimes_{L, \varphi}M$ is a quaternion algebra over $M$. For example, $\left( \frac{\alpha, \beta}{L}
\right)\otimes_{L, \varphi} M \cong \left( \frac{\varphi(\alpha), \varphi(\beta)}{M} \right)$. In particular, if $L$ is a number field, $v$ is a place of $L$ with corresponding completion $L_v$, and $B$ a quaternion algebra over $L$, then $v$ is called ramified (respectively, split) relative to $B$ if $B\otimes_L L_v$ is ramified (resp., split) over $L_v$.

A fundamental theorem states that (1) the number of ramified places for $B$ is finite and even; (2) $B$ is determined up to isomorphism by its set of ramified places; (3) given any set $S$ consisting of an even number of places of $L$, there is a quaternion algebra $B$ over $L$ with $S$ as its set of ramified places. As such a $B$ is unique up to isomorphism, one may write $B = B_S$. 

Suppose that $M/L$ is an extension of number fields, $V$ is a place of $M$, and $v$ is the restriction of $V$ to $L$. If $B$ is a quaternion algebra over $L$ that is split at $v$ then $B\otimes_L M$ is split at $V$. If $B$ is ramified at $v$ then $B\otimes_L M$ is ramified at $V$ if and only if $[M_V: L_v]$ is odd.

\subsubsection{The quaternion algebra $B_{p, \infty}$} We shall use the notation $B_{p, \infty}$ to denote that quaternion algebra over $\QQ$ ramified precisely at $p$ and $\infty$, where $p$ is a finite prime. If $p=2$ we obtain the rational Hamilton quaternions $B_{2, \infty}=\left( \frac{-1, -1}{\QQ}\right)$. For $p \equiv 3\pmod{4}$, $B_{p, \infty}$ is the quaternion algebra $\left( \frac{-1, -p}{\QQ}\right)$ \cite[p. 79]{Vigneras}. This is the algebra \[\QQ\otimes\QQ i\oplus \QQ j \oplus \QQ k,\] with \[i^2 = -1, \;\;j^2 = -p, \;\;ij = -ji = k, \] and so $k^2 = -p$ and the norm form is $x^2 + y^2 + pz^2 + pw^2$. Similar models can be given for all primes $p$, and they are rather explicit (loc. cit. p. 98).

\subsection{Orders and ideals} We now assume that $L$ is either a number field or a finite extension of~$\QQ_p$, and we denote $\ol$ its ring of integers. As before, let $B$ be quaternion algebra over $L$. 

An {\bf order} $\calO$ in $B$ is a rank $4$ $\ol$-module that contains a basis for $B$ over $L$ and is also a subring of $B$; it therefore contains $\ol$. Since $\ol$ need not have class number $1$, it is possible that $\calO$ is not a free $\ol$-module, but it always contains a free rank $4$ $\ol$-submodule of finite index. Every element $b$ of an order is integral over $\ol$ and, in particular, $\Tr(b), \Nm(b)$ lie in $\ol$. As $\bar b = b - \Tr(b)$, any order is closed under the canonical involution.  An order is called {\bf maximal} if it is not strictly contained in any other order. 

Let $(x, y)$ denote the bilinear form $\frac12\Tr(x\bar y)$ on $B$, and hence $2(a,b)\in\ol$ for any $a,b\in \calO$. For any free $\ol$-submodule $\oplus_{i=1}^4 \ol v_i$ of $\calO$, we consider the value
\[ d(v_1, \dots, v_4) := \det (2(v_i, v_j))_{1\leq i, j \leq 4}\in\ol.\]
This determinant depends on the choice of the basis $v_i$ for $\oplus_{i=1}^4 \ol v_i$ only up to squares of units of $\mathcal{O}_L^\times$. We define $\mathfrak{d}_\calO$, the {\bf discriminant} of $\calO$, to be the $\ol$-ideal whose square is the ideal generated by $d(v_1, \dots, v_4)$ as $\{v_1, \dots, v_4\}$ runs over all $L$-bases of $B$ in $\calO$.\footnote{The usual definition of discriminant uses the form $\frac12\Tr(ab)$ and not $\frac12\Tr(a \bar b)$, \cite[p. 23 ff.]{Vigneras}, \cite[\S 15.2]{Voight}. However, using the form $\frac12\Tr(a\bar b)$ gives the same discriminant ideal. See \cite[Ex. 13, p. 254]{Voight}.}

If $\calO^\prime \subset \calO$ is a suborder then $\mathfrak{d}_{\calO^\prime} = [\calO: \calO^\prime]^2 \cdot \mathfrak{d}_{\calO}$. It follows that every order is contained in a maximal order. It is a theorem that the discriminant of any maximal order is the product of all the finite ramified places of the algebra $B$. 

It is possible for the discriminant to be $\calO_L$ and yet for $B$ not to be isomorphic to $M_2(L)$. For example, take $L$ to be a real quadratic field in which $p$ is inert. Then $B_{p, \infty}\otimes_\QQ L$ is ramified precisely at the two infinite primes $\infty_1, \infty_2$ of $L$ and at no finite prime. 

\subsubsection{$B_{p, \infty}$ again.} \label{subsubsection: Bp again} For $p\equiv 3 \pmod{4}$, an example of a maximal order of $B_{p, \infty} = \left( \frac{-1, -p}{\QQ}\right)$ is given by 
\[ \ZZ \oplus \ZZ \cdot i \oplus \ZZ\cdot \frac{i+j}{2} \oplus \ZZ \cdot\frac{1+k}{2}\]
(\cite[p. 98]{Vigneras}). The matrix $(2(v_i, v_j))_{1\leq i,j\leq 4}$ associated to this basis is
\[ \left(\begin{smallmatrix} 2 & 0 & 0 & 1 \\
0 & 2 & 1 & 0\\  
0 & 1 & (p+1)/2 & 0\\
1 & 0 & 0& (p+1)/2 \\
\end{smallmatrix}\right);\]
its determinant is $p^2$, and so the discriminant of the order is $(p)$ as expected.

If $\calO$ is any maximal order of a quaternion algebra over a number field $L$, then, for any finite place $v$ of $L$, the base change to $\calO_{L_v}$ of the quadratic space $(\calO, \Nm)$ is isomorphic to $(M_2(\calO_{L_v}), \det)$ if $v$ is unramified. If $v$ is ramified then the corresponding base change is isomorphic to 
\begin{align}\label{eq:ramified_maxorder}
	\left( \left\{ \left(\begin{matrix}a & \pi_vb \\
		b^\sigma & a^\sigma
	\end{matrix}\right): a, b \in \calO_{L_{v,2}} \right\},\; \det\right),
\end{align}
where $\pi_v$ is a uniformizer of $L_v$ and $L_{v, 2}$ is the quadratic unramified extension of $L_v$.
In other words, this base change is isomorphic to
$((\calO_{L_{v,2}} \oplus \calO_{L_{v,2}}),\; aa^\sigma - \pi_v bb^\sigma)$. Choosing a basis for~$\calO_{L_{v, 2}}$ over~$\calO_{L_v}$, one may write this as a quadratic rank $4$ $\calO_{L_v}$-module. 

\subsubsection{Ideals} Let $\calO$ be an order in a quaternion algebra $B$ over $L$, a number field or a finite extension of $\QQ_p$. A {\bf right} (respectively, left) {\bf ideal} $I$ of $\calO$ is a rank $4$ $\calO_L$-module that contains a basis of $B$ over $L$, such that 
\[ I\cdot \calO \subseteq I \quad ({\rm respectively,}\; \calO \cdot I \subseteq I).\]
For a rank $4$ $\calO_L$-module $I$ that contains a basis of $B$ over $L$, define its {\bf left} (respectively, right) {\bf order} by
\[ \calO_\ell (I) = \{ b\in B: bI \subseteq I \}, \;\; ({\rm respectively,} \; \calO_r (I) = \{ b\in B: Ib \subseteq I \}).\]
These are indeed orders of $B$.

If $I$ is a left $\calO$-ideal then $\calO_\ell(I) \supseteq \calO$.  If $\calO$ is a maximal order and $I$ is a left ideal of $\calO$, then $\calO_\ell(I) = \calO$, while $\calO_r(I)$ is a maximal order that in general is not even conjugate to $\calO$ in~$B$. Analogous statements hold for right ideals. An ideal $J$ of $\calO$ is called a {\bf two-sided ideal} if $\calO = \calO_\ell(J) = \calO_r(J)$. 

\subsubsection{Ideal classes}\label{subsubsec: ideal classes} Let $B$ be a quaternion algebra over $L$ (still a number field or a finite extension of $\QQ_p$) and $\calO$ a maximal order of $B$. If $I$ is a right $\calO$-ideal, then so is $\lambda I$ for any $\lambda \in B^\times$. The relation $I \sim \lambda I$ is an equivalence relation on the set of right $\calO$-ideals. The equivalence classes are called the {\bf ideal classes} and we denote this by ${\rm Cl}(\calO)$; it is a set with a distinguished element, namely the class of $\calO$, but in general is not a group in any natural way. A fundamental theorem states that the number of ideal classes is finite, and we refer to this number as the {\bf class number} of $\calO$. It is a theorem that the class number is independent of the choice of maximal order $\calO$. We shall denote it $h_B$, or $h$ if the quaternion algebra $B$ is clear from context. More precisely: 
\begin{enumerate}
\item If $L$ is a finite extension of $\QQ_p$ and $B = M_2(L)$ then a maximal order is given by $M_2(\calO_L)$. All maximal orders in $B$ are conjugate in $B$, and each has class number $1$. If $B$ is the ramified quaternion algebra over $L$ (recall that such $B$ is unique up to isomorphism), then there is a unique maximal order, given by (\ref{eq:ramified_maxorder}), and again the class number is $1$. 
\item If $L$ is a number field, in general the class number is not $1$, but sometimes it is. If $L = \QQ$ and $B = B_{p, \infty}$ then the class number $h_{B_{p, \infty}}$ is equal to 
\begin{enumerate}
\item $\left\lfloor\frac{p}{12}\right\rfloor$ if $p \equiv 1 \pmod{12}$; 
\item $\left\lfloor\frac{p}{12}\right\rfloor+1$ if $p \equiv 5, 7 \pmod{12}$; 
\item $\left\lfloor\frac{p}{12}\right\rfloor+2$ if $p \equiv 11 \pmod{12}$; 
\item  $1$ if $p=2, 3$.
\end{enumerate}
(This follows from \cite[Theorem 30.1.5]{Voight}, as explicated by \cite[Chapter 30 Exercise 6]{Voight}.)
\end{enumerate}

\medskip
\id Given a maximal order $\calO$, let $I_1, \dots, I_h$ be representatives for the right ideal classes of $\calO$. Let $\calO_i: = \calO_\ell(I_i)$. Then every maximal order of $B$ is conjugate to one of the orders $\calO_i$. In general, the number of maximal orders in $B$, up to conjugation in $B$ (equivalently, up to isomorphism of rings) is called the {\bf type number} $t_B$ of $B$. Therefore, given these results, we see that the type number is smaller or equal to the class number. One can be more precise: 

Consider the collection of two-sided ideals of $\calO$. These ideals, considered up to equivalence $I \sim \lambda I$, where now $\lambda \in L^\times$, form a group $\text{\rm biCl}(\calO)$ that for $L$ a number field of class number~$1$ is an abelian group of order $2^f$, where $f$ is the number of finite ramified places of $B$. In fact, for every choice of $\epsilon_v\in\{0,1\}$ for the finite ramified places $v$ of $B$, there is a two sided ideal $I$ of $\calO$ characterized by the fact that for each finite ramified place of $B$ we have
\[I_v=\left\{ \left(\begin{matrix}a & \pi_vb \\ b^\sigma & a^\sigma 
\end{matrix}\right): a,b\in\calO_{L_v},\;\pi_v^{\epsilon_v} \vert a\right\}\]
and we have $I_v=\calO_v$ for all other places. Each equivalence class of two-sided ideals contains exactly one of these ideals, and this provides an isomorphism $\text{\rm biCl}(\calO) \cong (\ZZ/2\ZZ)^f$. This holds for every choice of maximal order $\calO$. For any fixed $\calO$, there is a map of pointed sets
\[ \text{\rm biCl}(\calO) \arr {\rm Cl}(\calO), \]
that may have a kernel, and this kernel may very well depend on the choice of $\calO$. For example, for $\calO$ a maximal order in $B_{p, \infty}$ we have $\text{\rm biCl}(\calO)\cong \ZZ/2\ZZ$, and its image in ${\rm Cl}(\calO)$ is trivial if and only if $\calO$ has an element of norm $p$.

Returning to our choice of representatives $I_1, \dots, I_h$ for ${\rm Cl}(\calO)$, 
suppose that $O_i$ is conjugate to $O_j$ for some $i \neq j$, say $\calO_i = \alpha  \calO_j \alpha^{-1}$. By changing the ideal $I_j$ to $\alpha I_j$ (which does not change the right ideal class), we may assume without loss of generality that $\calO_i = \calO_j$. One can prove that this is the case if and only if $I_i = J  I_j$ for some two-sided ideal $J$ of $\calO_j$ (see \cite[Proposition 2.18]{Pizer1}, \cite[p. 87 ff.]{Vigneras}, \cite[\S\S 18.4-18.5]{Voight}). 
Note that necessarily $J$ is not principal in ${\rm Cl}(\calO)$. Thus, such a coincidence $\calO_i = \calO_j$ can happen only if the image of $\text{\rm biCl}(\calO_j)$ in ${\rm Cl}(\calO_j)$ is not trivial, and conversely, elements of $\text{\rm biCl}(\calO_j)$ that are not principal ideals create such coincidences. 
For $B=B_{p, \infty}$ one concludes the formula 
\[ h_B = 2 t_B - u_B, \]
where $u_B$ is the number of maximal orders of $B$, considered up to conjugation, containing an element of norm $p$.
These maximal orders counted by $u_B$ are discussed further in \S \ref{subsubsec: brandt matrices}.

\subsection{Supersingular elliptic curves}
General references for this section are \cite{Silverman1}, \cite{Washington}. We also assume some familiarity with algebraic geometry, mostly at the level of \cite{Silverman1}, but occasionally a bit more. 

We fix an algebraic closure $\fpbar$ of $\mathbb{F}_p$, and denote by $\FF_{p^r}$ the unique finite field with $p^r$ elements in $\fpbar$.

\subsubsection{Weierstrass equations} Let $k$ be a field. An {\bf elliptic curve} over $k$ is a proper non-singular geometrically-irreducible curve $E$ of genus $1$ with a specified $k$-rational point denoted $0_E$. Using the Riemann-Roch theorem, one can prove that if ${\rm char}(k) \neq 2$ then such a curve can be given by an equation in $\PP^2_k$ of the form
\[ y^2 = f(x), \]
for some separable polynomial $f(x) \in k[x]$ of degree $3$. In fact, if ${\rm char}(k) \neq 2, 3$ one may take 
\begin{equation}  \label{eqn: short W}f(x): = x^3 + ax + b \in k[x].
\end{equation}This is called a {\bf short Weierstrass equation}. To be precise, it is the closure of this curve in the projective plane $\PP^2_k$ that is the elliptic curve, and we take $0_E$ to be the unique point at infinity given by $(x:y:z) = (0:1:0)$. A slightly more complicated equation, likewise obtained by using the Riemann-Roch theorem, may be needed over fields of characteristic $2$ or $3$. 

An elliptic curve $E$ has a {\bf $j$-invariant} $j(E)$; if $E$ is given by the equation $y^2 =  x^3 + ax + b$, then 
\begin{equation}
j(E) = 1728\cdot  \frac{4a^3}{27b^2 + 4a^3}.
\end{equation}
If there is an isomorphism over $k$ between $E_1$ and $E_2$ then $j(E_1) = j(E_2)$, and when $k$ is algebraically closed the converse also holds. Furthermore, for any field $k$ and element $j\in k$ there is an elliptic curve $E$ over $k$ with $j$-invariant $j(E) = j$. In fact, if $j \neq 0, 1728 $, one may take 
\[E\colon y^2+xy = x^3 - \frac{36}{j-1728}  -\frac{1}{j-1728}\]
\cite[\S III, Proposition 1.4]{Silverman1}. See \S \ref{sec:0 and 1728} for the cases $j =0, 1728$.

An elliptic curve is an abelian algebraic group; that is, there are morphisms 
\[ m\colon E \times E \arr E, \quad inv\colon E \arr E, \]
that make $E(k')$ into a group for any extension $k'/k$, where $m(P, Q)$ is the sum of $P,Q\in E(k')$ (we shall also write this as $P+Q$) and $inv(P)$ is the negative of $P$ (written as $-P$). The zero point of this group is $0_E$, and three points $P, Q, R\in E(k')$ are the intersection points of $E$ with a line (counted with multiplicity if necessary) if and only if $P+Q+R = 0_E$.

\subsubsection{Isogenies, degrees, kernels} 
It is a theorem that every morphism $f\colon E_1 \arr E_2$ of elliptic curves satisfying $f(0_{E_1}) = 0_{E_2}$ is also a group homomorphism. If $f$ is not the zero map then it is called an {\bf isogeny}. An isogeny $f$ -- being a non-constant morphism of irreducible curves -- must be surjective as a map of varieties (though the induced map $E_1(k')\to E_2(k')$ for $k'/k$ need not be surjective unless $k'$ is algebraically closed). Given an isogeny $f$, we may define its {\bf kernel} $\Ker(f)$ to be the fiber product 
\[ \xymatrix{ \Ker(f) \ar[r]\ar[d] & \{ 0_{E_2} \}\ar[d] \\ E_1 \ar[r]^f & E_2. }\]
As $f$ is a non-constant morphism of curves, this fiber product is $\Spec$ of a $k$-algebra which is finite-dimensional as a $k$-vector space; this  dimension is called the {\bf rank} of the finite group scheme $\Ker(f)$. The {\bf degree} $\deg(f)$ of the isogeny $f$ is defined to be the rank of $\Ker(f)$. This is also equal to the degree of the extension of function fields $[k(E_1) : f^\ast k(E_2)]$.
If $\bar k$ is an algebraically closed field then the $\bar k$-points $\Ker(f)(\bar k)$ of $\Ker(f)$ form an abelian group of cardinality dividing $\deg(f)$, but equality does not always hold. Equality does hold in the cases where ${\rm char}(k) = 0$, or where ${\rm char}(k) = p>0$ and $\gcd(p, \deg(f)) = 1$.  

As $E$ is an abelian group, for any integer $n$ the {\bf multiplication by $n$} map, denoted $[n]$, is an {\bf endomomorphism} (a morphism from $E$ to itself fixing $0_E$) and, in fact, an isogeny if $n\neq 0$. The kernel is denoted in this case $E[n]$ and it is always a group scheme of degree $n^2$. If $n = -1$ this map is given by $(x, y) \mapsto (x, -y)$ in terms of the short Weierstrass equation~(\ref{eqn: short W}). Since a point~$P$ is of order $2$ if and only if $P = -P$, we see that 
\[ E[2] = \{0_E, (\alpha_1, 0), (\alpha_2, 0), (\alpha_3, 0)\}, \]
where $\alpha_1, \dots, \alpha_3$ are the roots of $f(x)$. 

For an extension $k'/k$, the collection of homomorphisms $\Hom_{k'}(E_1, E_2)$ (that is, the collection of isogenies $E_1\to E_2$ defined over $k'$ together with the zero map) is an abelian group, and the collection of endomorphisms $\End_{k'}(E_1)$ is a ring with multiplication given by composition. If $\Hom_{k'}(E_1, E_2)$ is nontrivial then it is a projective rank $1$ module over $\End_{k'}(E)$. The degree map is a quadratic form $\Hom_k(E_1, E_2) \arr \ZZ$. When we write $\Hom(E_1, E_2)$ or $\End(E_1)$ without subscripts, we mean $\Hom_{\bar k}(E_1, E_2)$ and $\End_{\bar k}(E_1)$, where $\bar k$ is an algebraic closure of $k$.

If $H$ is a finite subgroup scheme of $E$ then the quotient scheme $E/H$ exists and is an elliptic curve whose $\bar k$ points are $E(\bar k) /H(\bar k)$. The morphism $E \arr E/H$ is an isogeny with kernel $H$. If $f\colon E_1 \arr E_2$ is an isogeny with kernel $H$, then there is a natural isomorphism $E_1/H \arr E_2$ such that the following diagram commutes:
\begin{equation}
 \xymatrix{E_1 \ar[rr]^f\ar[rd] && E_2 \\ &E_1/H. \ar[ru]^\cong &} 
 \end{equation}
Similarly, analogues of all the main isomorphism theorems from group theory hold for elliptic curves.

\medskip

\id To every isogeny $f\colon E_1 \arr E_2$ there is an associated {\bf dual isogeny} $f^\vee\colon E_2 \arr E_1$ uniquely determined by the property 
\[ f^\vee \circ f = [\deg(f)]\]
(meaning, the composition is the multiplication by the integer $\deg(f)$ on $E_1$). It is also true that $\deg(f) = \deg(f^\vee)$ and $f^{\vee\vee} = f$. Moreover, mapping an isogeny to its dual gives an isomorphism of quadratic modules $\Hom(E_1, E_2) \arr \Hom(E_2, E_1)$. On $\End(E)$ this map is an anti-isomorphism of rings, meaning $(f\circ g)^\vee = g^\vee \circ f^\vee$. In particular, these statements imply the fact that $[n]^\vee = [n]$.

\subsubsection{Complex multiplication}\label{subsubsec:CM} If $k$ is a characteristic zero field and $E\colon y^2 = x^3 + ax + b$ is an elliptic curve over $k$, one can embed $\QQ(a, b) \injects \CC$ and so one may view $E$ as a complex elliptic curve. Let $\calT_E$ denote that tangent space of $E$ at $0_E$. By the theory of complex Lie groups, the exponential map $\calT_E \arr E(\CC)$ is a surjective analytic homomorphism whose kernel is a full lattice~$\Lambda$ in~$\calT_E$; that is, a rank $2$ discrete subgroup of $\calT_E\cong \CC$. By a homothety of $\CC$, one may assume that $\Lambda = \ZZ \oplus \ZZ \tau$ where $\tau \in \CC$ is such that $\Ima(\tau) > 0$. Since we have an isomorphism 
\[\CC/\Lambda \cong E(\CC),\]
we see that $E[n] \cong \frac{1}{n} \Lambda /\Lambda$, a group of cardinality $n^2$. Drawing further from the theory of complex Lie groups, we find that for elliptic curves $E_i \cong \CC/\Lambda_i$,
\[ \Hom(E_1, E_2) = \{ \alpha\in \CC: \alpha \Lambda_1\subset \Lambda_2\}.\]
From this, it is not hard to conclude that if $E\cong \CC/\ZZ\oplus \ZZ\tau$ is an elliptic curve over a field $k$ of characteristic $0$, then 
\[ \End(E) \cong \begin{cases} \text{\rm an order}\;  R \; {\rm of} \;\QQ(\tau)& \tau \; \text{\rm quadratic over } \; \QQ\\
	\ZZ & {\rm otherwise}.
\end{cases}\]
In case $\End(E) = R$ is a quadratic order, one says that $E$ has {\bf complex multiplication}.

\medskip

\subsubsection{The $j$-invariants $0$ and $1728$}\label{sec:0 and 1728} Over $\CC$, the elliptic curve 
\[ E_{1728}\colon y^2 = x^3 + x, \]
has an automorphism $\varphi\colon (x, y) \mapsto (-x, iy)$. Note that $\varphi^2 = [-1]$ and so, given that $\End(E_{1728})$ is at most a quadratic order, we must have $\End(E) \cong \ZZ[i]$ (where $\varphi$ corresponds to $i$). The $j$-invariant of $E_{1728}$ is $1728$. Note that the same equation defines an elliptic curve over $\FF_p$ for any $p \neq 2$ (but the endomorphism ring may become larger). 

Over $\CC$, the elliptic curve 
\[ E_{0}\colon y^2 = x^3 + 1, \]
has an automorphism $\varphi\colon (x, y) \mapsto (\omega x, y)$, where $\omega = e^{2\pi i /3}$ is a primitive third root of unity. Note that $\varphi^3 = [1]$ and so, given that $\End(E_{0})$ is at most a quadratic order, we must have $\End(E) \cong \ZZ[\omega]$ (where $\varphi$ corresponds to $\omega$ and $\omega^2+\omega + 1 =0$). The $j$-invariant of $E_{0}$ is $0$. This equation is good also over $\FF_p$ for any $p > 3$ (though again the endomorphism ring may be larger).

In a field of characteristic $2$ or $3$ we have $0=1728$. The equation $y^2+y=x^3$ defines an elliptic curve of $j$-invariant $0$ over $\FF_2$, and $y^2=x^3+x$ defines an elliptic curve of $j$-invariant $0$ over $\FF_3$.

\subsubsection{Endomorphisms of elliptic curves over fields of positive characteristic} Let $k$ be a field of positive characteristic $p$ and let $E$ be an elliptic curve over $k$. In this case, there are exactly 3 possibilities for $\End_k(E)$.
\[\End_k(E) \cong
\begin{cases} \ZZ, & \\
	R, & \text{\rm a quadratic imaginary order,}\\
	\calO, & \text{\rm a maximal order of } B_{p, \infty}.
\end{cases}
\]
In fact, the quadratic imaginary orders $R$ that arise have conductor prime to $p$ and $p$ splits in~$R$, and, conversely, every such~$R$ arises as $\End_k(E)$ for some elliptic curve $E$ over some finite field~$k$. Likewise, every maximal order of $B_{p, \infty}$ is isomorphic to some $\End_k(E)$.
For elliptic curves $E$ over a field $k$ of positive characteristic, it is a fact that  $\End_k(E)\neq \ZZ$ if and only if $E$ can be defined over a finite field (i.e.~there exists a finite subfield $\FF_{p^r}$ in $k$ and an elliptic curve $E'$ over $\FF_{p^r}$ such that $E$ is the base change of $E'$ to $k$).
If $\End_{\bar k}(E)$ is a maximal order of $B_{p,\infty}$ then $E$ is called {\bf supersingular}, and otherwise $E$ is called {\bf ordinary}. 
 
To fix ideas, assume that $E$ is given by a Weierstrass equation $y^2 = x^3 + ax + b$. Then $E^{(p)}$, the {\bf Frobenius base-change} of $E$, is the elliptic curve given by 
 \begin{equation}
 E^{(p)}\colon y^2 = x^3 + a^px + b^p.
  \end{equation}
 In fact, scheme-theoretically, $E^{(p)} \cong E \otimes_{\Spec(k), \varphi} \Spec(k)$, where $\varphi(z) = z^p$ is the Frobenius homomorphism of $k$. This observation shows that the construction can be carried in complete generality for any $k$-scheme. We may likewise define $E^{(p^n)}$ for any $n\geq 1$ by applying Frobenius base-change $n$ times.
 
 The canonical morphism
 \begin{equation}
 \Fr\colon E \arr E^{(p)}, \quad \Fr(x, y) = (x^p, y^p), 
  \end{equation}
 takes $0_E$ to $0_{E^{(p)}}$ and is therefore an isogeny. Its degree is $p$. Its dual isogeny $\Fr^\vee$ is called the {\bf Verschiebung} isogeny, 
 \[ \Ver\colon E^{(p)} \arr E, \]
 and it satisfies $\Ver\circ \Fr = [p]$ on $E$ and $\Fr\circ \Ver = [p]$ on $E^{(p)}$. When we need to be clear about the starting curve we write $\Fr_E\colon E \arr E^{(p)}$, and we define 
 \[\Fr^n_E:=\Fr_{E^{(p^{n-1})}}\circ \cdots \circ \Fr_{E^{(p)}}\circ \Fr_E,\]
 an isogeny of degree $p^n$ from $E$ to $E^{(p^n)}$.
 
 A supersingular elliptic curve $E_1$ over $k$ has a unique subgroup scheme of rank
 $p$ (while an ordinary elliptic curve has two). This implies that any isogeny 
$f\colon E_1 \arr E_2$ of degree $p$ fits into a commutative diagram
\begin{align}\label{eq:factor through Fr}
	\xymatrix{E_1 \ar[rr]^f\ar[rd]^{\Fr}&& E_2 \\ & E_1^{(p)} \ar[ru]^\gamma& }
\end{align}
where $\gamma$ is an isomorphism. 

\subsubsection{Equivalent definitions of supersingularity} There are many ways to characterize supersingularity. We assume that $E$ is defined over $\FF_{p^r}\subset \overline{\FF}_p$. The following are equivalent:
\begin{enumerate}
\item $E$ is supersingular ($\End_{ \overline{\FF}_p}(E)$ is isomorphic to a maximal order $\calO$ of $B_{p, \infty}$).
\item $E[p]( \overline{\FF}_p) = \{ 0_E\}$ (there are no non-trivial $p$-torsion points).
\item $E[p^n]( \overline{\FF}_p) = \{ 0_E\}$ for some (or all) $n\geq 1$.
\item The group scheme $\alpha_p:= \Ker(\Fr: \GG_a \arr \GG_a)$ embeds into $E$ over $ \overline{\FF}_p$.\footnote{The group scheme $\alpha_p$ is the group scheme whose $R$ points, for every commutative $\FF_p$-algebra $R$, are $\{x\in R: x^p = 0\}$, with the usual addition law.}
\item $E$ can be defined over $\FF_{p^2}$ and, identifying $E$ with $E^{(p^2)}$, we have $\Fr^2 = \epsilon \circ [p]$ for some automorphism $\epsilon$ of $E$. 
\item The morphism $\Ver\colon E^{(p)} \arr E$ is inseparable, meaning the extension of fields $\Ver^\ast k(E)\subset k(E^{(p)})$ is inseparable (of degree $p$). 
\item Assuming $p>2$, let $m = (p-1)/2$. Writing the elliptic curve $E$ in {\bf Legendre form} $E\colon y^2 = x(x-1)(x-\lambda)$ (which is always possible if one allows a finite extension of the base field), $\lambda$ is a root of the polynomial 
\[ H_p(t) = \sum_{i=0}^m \binom{m}{i}^2 t^i.\] 
(If $p=2$ then $E$ is supersingular if and only if $j(E)=0$.)
\item For every curve $E'$ that is isogenous to $E$ over $\overline{\FF}_p$, $j(E')\in \FF_{p^2}$.\footnote{One can make this into an effective criterion for determining whether a given elliptic curve $E$ is supersingular or not; see \cite{SutherlandIdentify}.}
\item For $p>2$, if $E$ is defined over $\FF_q$ and $\sharp(E(\FF_q)) = q+1 - A$, then $A \equiv 0 \pmod{p}$.\footnote{This condition yields an effective polynomial time test for supersingularity using Schoof's algorithm \cite{Schoof}.}
\end{enumerate}
The equivalence of most of these conditions is proven in \cite[Theorem V.3.1, Theorem V.4.1]{Silverman1}. For condition (4), which will be transparent to the reader versed in the theory of finite commutative group schemes, see for instance \cite[Proposition 2.1]{Ulmer}.
 
 \medskip
 
 \id Note that the degree of $H_p$ in definition (7) is $(p-1)/2$. On the other hand, we have \cite[\S III, Proposition 1.7]{Silverman1}
 \[ j(E) = 2^8 \frac{(\lambda^2 - \lambda +1)^3}{\lambda^2(\lambda-1)^2}\]
 and so the association $\lambda \mapsto j$ is ``usually'' $6:1$. This suggests that the number of supersingular $j$-invariants, namely, the number of supersingular elliptic curves up to isomorphism over $\overline{\FF}_p$, is roughly $(p-1)/12$. This counting can be made precise (\cite[\S V, Theorem 4.1]{Silverman1}) and one obtains that the number of supersingular elliptic curves is precisely the class number $h_{B_{p, \infty}}$ of~$B_{p, \infty}$ given in \S \ref{subsubsec: ideal classes}. In fact there is an explicit correspondence between isomorphism classes of supersingular curves and ideal classes of a maximal order in $B_{p,\infty}$, which is described in \S \ref{subsubsec: deuring corr}.
 \medskip 
 
 If $E_1$ is a supersingular elliptic curve and $f\colon E_1 \arr E_2$ is an isogeny, then $E_2$ is also supersingular. This follows immediately from (8), but it may also be instructive to see how this follows from (2).  Suppose there exists a nonzero point $Q\in E_2[p]$. For any $n\geq 1$, the map $[p^{n-1}]$ is an isogeny, and so $Q$ can be written as $p^{n-1}P$ for some point $P\in E_2(\overline{\FF}_p)$ of exact order $p^n$. 
 Take $n$ such that $p^n > \deg(f)$. Note that $f^\vee(P)$ is a point on $E_1$ that is annihilated by $[p^n]$, and therefore must be $0_{E_1}$ since $E_1$ is supersingular. But this implies $f(f^\vee(P))=[\deg f](P)=0_{E_2}$, a contradiction since $P$ has order $p^n>\deg(f)$.

\subsubsection{Models of supersingular elliptic curves over $\FF_{p^2}$}\label{subsec: model over Fp2} The $j$-invariant of a supersingular elliptic curve $E$ lies in $\FF_{p^2}$ and so $E$ has a model over $\FF_{p^2}$. In such a model we have $\Fr^2 = \epsilon\circ [p]$ for some automorphism $\epsilon$ of $E$. It turns out \cite{BGJGP} that one can always find such a model where $\Fr^2 = [-p]$.

For any such models $E_1, E_2$ we have
\[ \Hom_{\FF_{p^2}}(E_1, E_2) = \Hom_{\overline{\FF}_p}(E_1, E_2),\]
that is, all isogenies are defined over $\FF_{p^2}$.
Indeed, if $f \in \Hom_{\overline{\FF}_p}(E_1, E_2)$ then so is $f^{(p^2)}$, the base-change of $f$ by the square of Frobenius. This base change is characterized by the property that $f^{(p^2)}(\Fr^2 P) = \Fr^2(f(P))$ for any point $P$ on $E_1$. But, \[f^{(p^2)}(\Fr^2 P) = f^{(p^2)}(-pP) = -p f^{(p^2)}(P)\]
and $\Fr^2(f(P)) = -p f(P)$. As $[-p]$ is an isogeny, and $-p (f^{(p^2)} - f) = 0$, it follows that $f = f^{(p^2)}$ and thus $f \in \Hom_{\FF_{p^2}}(E_1, E_2)$.\footnote{Taking an appropriate twist, each supersingular curve also has a model over $\FF_{p^2}$ satisfying $\Fr^2=[p]$, and for such models we also have $\Hom_{\FF_{p^2}}(E_1, E_2) = \Hom_{\overline{\FF}_p}(E_1, E_2)$. This gives us an alternate setup, but we will not use this.}

\subsection{Reducing curves with complex multiplication modulo primes}\label{sec:CM}
 \cite{Lang, Serre, Silverman2} may be consulted for a complete overview. A powerful method of finding supersingular elliptic curves is the reduction of elliptic curves with complex multiplication (as defined in \S \ref{subsubsec:CM}). Let $E$ be an elliptic curve over $\CC$ such that $\End(E) = \calO_K$, the ring of integers of a quadratic imaginary field $K$. The complex uniformization $\CC/\Lambda\cong E(\CC)$ (see \S \ref{subsubsec:CM}) shows that $\Lambda$ must be an $\calO_K$-module. Being of rank $2$ and torsion free, $\Lambda$ must be a projective $\calO_K$-module of rank $1$ and thus its isomorphism class is an ideal class of $\calO_K$. Conversely, given an ideal $I$ of $\calO_K$ and an inclusion $K \arr \CC$, we may view $I$ as an $\calO_K$-invariant lattice in $\CC$ and get an elliptic curve with complex multiplication as $\CC/I$. Fixing an inclusion $K \arr \CC$, one obtains a bijection between the class group ${\rm Cl}(\calO_K)$ and isomorphism classes of elliptic curves with complex multiplication such that action of $K$ on their tangent space is given by the fixed inclusion $K \arr \CC$. 

Choose representatives $I_1, \dots, I_h$ for the ideal classes of $K$. Let $E_i$ be the elliptic curves such that $E_i(\CC) \cong \CC/I_i$. 

One of the main theorems of complex multiplication is that $j(E_i)$ belongs to the Hilbert class field $H_K$ of $K$. The Hilbert class field is the maximal unramified extension of $K$ with abelian Galois group $\Gal(H_K/K)$, and this Galois group is isomorphic to ${\rm Cl}(\calO_K)$. This theorem states further that $\Gal(H_K/H)$ acts simply transitively on $j(E_1), \dots, j(E_h)$ and it follows that $H_K = K(j(E_i))$ for any $i$. In particular, if $E$ has CM by $\calO_K$ then $E$ has a model over $H_K$. 

For a number field $L$,
every elliptic curve $E/L$ with complex multiplication has {\bf potential good reduction} everywhere. One interpretation of this statement is that there is a finite extension $M$ of $L$ such that $E$ has a general Weierstrass equation $y^2 + a_1xy + a_3y = x^3 + a_2x^2 + a_4x + a_6$, $a_i \in \calO_{k_1}$, such that modulo every finite place $v$ of $M$, the reduction of this equation modulo $v$ still defines a non-singular (elliptic) curve $E\pmod{v}$. Let $p$ be the rational prime determined by $v$. Then $E \pmod{v}$ is an elliptic curve over a finite field $\FF_{p^r}$ (with $r$ depending on $v$) and we can ask whether it is ordinary or supersingular. It is a theorem that $E\pmod{v}$ is supersingular if $p$ is inert or ramified in $K$, and it is ordinary if $p$ is split in $K$. 

As an example, consider $E_0$ that has CM by $\ZZ[\omega]$ (see \S \ref{sec:0 and 1728}). The only prime ramifying in~$\ZZ[\omega]$ is $3$. The prime $2$ is inert and a prime $p > 3$ is split if and only if there are non-trivial third roots of unity in $\FF_p$, which is the case if and only if $p \equiv 1 \pmod{3}$. Thus, the primes  congruent to $1$ mod $3$ split, and those congruent to $2$ mod $3$ are inert. In particular, $0$ is a supersingular $j$-invariant for $p = 2, 3$ and all $p>3$ congruent to $2$ mod $3$. 

Similarly, $E_{1728}$ has CM by $\ZZ[i]$. The prime $2$ ramifies, the primes $p \equiv 3 \pmod{4}$ are inert and the primes $p \equiv 1 \pmod{4}$ are split. Consequently, $1728$ is a supersingular $j$-invariant if $p=2$ or $p \equiv 3 \pmod{4}$. Working mod $12$ we see that we found explicit supersingular $j$-invariants if $p=2, 3$ or $p = 5,7,11 \pmod{12}$. This leaves the primes $p$ that are congruent to $1$ mod 12. For such primes one uses ad hoc methods that depend on the prime, for instance by looking for an imaginary quadratic field with small discriminant in which $p$ is inert. One could alternatively work with CM elliptic curves for the field $\QQ(\sqrt{-p})$. Note though that if $p$ is of cryptographic size $p \approx 2^{500}$, then by the Brauer-Siegel theorem the class number of $\QQ(\sqrt{-p})$ is approximately of size $\sqrt{p} \approx 2^{250}$. Thus, the minimal field of definition of such a CM curve would be of degree $\approx 2^{250}$ over $\QQ$, which is completely infeasible to do computations with.

\subsubsection{Lifting theorems} We now discuss a converse to the reduction of elliptic curves with CM. Let $E$ be an elliptic curve over $\FF_{p^r}$ and let $f\in \End(E)$ be a non-integer endomorphism of $E$. Let $x^2 + bx +c$ be the characteristic polynomial of $f$ as an element of the ring $\End(E)\subset \ZZ$. Deuring's lifting theorem says that there is an elliptic curve $\tilde E$ over the Witt ring $W(\FF_{p^r})$, or a ramified quadratic extension of it, and $\tilde f\in \End(\tilde E)$ with characteristic polynomial $x^2+bx+c$, such that the reduction of $(\tilde E, \tilde f)$ 
modulo the uniformizer of $W(\FF_{p^r})$ is $(E, f)$; see \cite[Proposition 2.7]{GZ}, \cite[13, \S 5]{Lang}. A simpler statement, which the above theorem can be used to prove, is that every elliptic curve $E$ over a finite field is a reduction of an elliptic curve over a number field with complex multiplication.

One can ask a related question: consider a quadratic imaginary field $K$ of class number $h_K$, and elliptic curves $\tilde E_1, \dots, \tilde E_{h_K}$ with CM by $\calO_K$, all defined over a number field $k$ and having good reduction at a prime $v$ of $k$ over $p$. Let $E_i = \tilde E_i \pmod{v}$ and assume that $p$ is inert in $K$. If $h_K \gg p$, is every supersingular elliptic curve over $\fpbar$ isomorphic to some $E_i$? By Deuring's lifting theorem, this is equivalent to asking whether every maximal order $\calO$ of $B_{p, \infty}$ admits an embedding of $\calO_K \arr \calO$. This is true for $h_K$ sufficiently large, and follows from a stronger result due to Elkies, Ono and Yang \cite{EOY}.

\subsection{$\Hom(E_1, E_2)$ as a quadratic module.}\label{subsec:Hom quadratic module} Let $E_1,E_2$ be supersingular elliptic curves over $\FF_{p^2}$, and recall that we denote $\Hom(E_1,E_2):=\Hom_{\fpbar}(E_1,E_2)$. If we set the degree of the zero homomorphism to be $0$, then the degree function 
\[ \deg\colon \Hom(E_1, E_2) \arr \ZZ, \]
is a quadratic form with associated bilinear form 
 \[ (f, g) = \frac12\left(\deg(f+g) - \deg(f) - \deg(g)\right).\]
 Note that we can interpret $\deg(f)$ as $f^\vee f$, where $f^\vee$ is the dual isogeny. Then, using also that  $(f+g)^\vee = f^\vee + g^\vee$, we find
  \[ (f, g) = \frac12\left(g^\vee f + f^\vee g\right).\]
It is easy to check that this is a bilinear form when written in this way.
  
 Fixing a $\ZZ$-basis $v_1, \dots, v_4$ for $\Hom(E_1, E_2)$, the matrix $A = (2(v_i, v_j))_{i, j = 1}^4$ has determinant~$p^2$, so the quadratic form $\deg$ has discriminant $p^2$.
 We say that $\deg$ has {\bf level} (or \textit{stufe}) $p$ because $p$ is the smallest positive integer with the property that $pA^{-1}$ has integer entries. For example, take $E_1 = E_2$ such that $\End(E_i)$ is the maximal order given in \S \ref{subsubsection: Bp again}. The determinant of the matrix given there is $p^2$ and its inverse is 
 \[ \frac{1}{p}\left(\begin{smallmatrix} \frac{p+1}{2} & 0 & 0 &-1\\0 & \frac{p+1}{2} & -1&0\\ 0 & -1& 2& 0\\-1&0&0&2
 \end{smallmatrix}\right).\]
 Recall that $\End_{\fpbar}(E_1)$ is isomorphic to a maximal order $\calO$ in $B_{p,\infty}$.
 As $\Hom(E_1, E_2)$ is a right projective rank $1$ module over $\End_{\fpbar}(E_1)$, it is isomorphic as a right $\calO$-module to some ideal~$I$ that is only determined up to equivalence, $I \sim \lambda I$ for some $\lambda\in B_{p,\infty}^\times$. One can show that under such an isomorphism $\deg$ corresponds to $\frac{1}{\Nm(I)} \Nm(\cdot )$, a formula that indeed depends only on the ideal class of $I$. Here $\Nm(I)$ is a positive generator of the fractional ideal in $\QQ$ generated by~$\Nm(\alpha)$ for $\alpha\in I$.

An important fact is that the quadratic forms $(\Hom(E_1, E_2), \deg)$, for varying supersingular elliptic curves $E_i$, all belong to the same genus; that is, their localizations at every place are isomorphic. This is easy to check, because locally at a prime $\ell \neq p$ we may assume that $\End(E_1) = M_2(\ZZ_\ell)$, the ideal $I$ corresponding to $\Hom(E_1, E_2)$ is $M_2(\ZZ_\ell)$, and the degree is the determinant. At $\ell = p$ we likewise have a description of $\calO\otimes \ZZ_p$ (\ref{eq:ramified_maxorder}) and we may again take $I = \calO\otimes \ZZ_p$ with the degree function being induced by the determinant on $M_2(\QQ_{p^2})$. Incidentally, this gives another way to find the discriminant and level. Less obvious, but true, is that every integral quadratic form in $4$ variables lying in this genus is realized by $(\Hom(E_1, E_2), \deg)$ for some $E_1,E_2$ \cite[Corollary 63]{Kohel}.

\section{Supersingular isogeny graphs} Supersingular isogeny graphs are directed graphs associated to supersingular elliptic curves and the isogenies between them. Similar constructions can be made for the minimal Ekedahl-Oort strata of the mod $p$ fiber of a Shimura variety, for example for superspecial abelian varieties with real multiplication -- an example we discuss below and that we hope to return to in future work. Supersingular isogeny graphs where considered already by Mestre \cite{Mestre}, if not before, as a tool for computing modular forms on $\Gamma_0(p)$, but have come to broader attention more recently for their cryptographic applications. 

Isogeny graphs can also be constructed for ordinary elliptic curves. These behave very differently from supersingular isogeny graphs, but have many interesting computational applications; see \cite{Sutherland} for a survey.

\subsection{Definition of supersingular isogeny graphs} Let $p$ be a fixed prime and $\ell \neq p$ another prime. In many applications one thinks of $p$ as ``big'', of ``cryptographic size'' (e.g.~$p\approx 2^{500}$), and of $\ell$ as ``small''; very often $\ell = 2$ is a completely adequate choice. The {\bf supersingular isogeny graph} is a directed graph $\scrG(p, \ell)$ with one vertex for each supersingular $j$-invariant. To every such $j$-invariant $j$ we choose once and for all an elliptic curve $E(j)$, defined over $\FF_{p^2}$, for which $\Fr^2 = [-p]$. 
Given vertices $j_1,j_2$, the edges from $j_1$ to $j_2$ correspond to isogenies $f\colon E(j_1) \arr E(j_2)$ of degree $\ell$, considered up to automorphisms of $E(j_2)$. That is, the edges out of $j_1$ correspond to subgroups $H \subset E(j_1)[\ell]$ of order $\ell$, and such an edge points to $j_2$ if $E(j_1)/H \cong E(j_2)$. See Figure~\ref{fig:supsing examples} for an example of two supersingular isogeny graphs with the same prime $p$ and distinct values of $\ell$.

It is confusing, but within tolerance, to denote the vertices by $1, \dots, h$ (with $h = h_{B_{p, \infty}}$), or by the supersingular $j$-invariants $j_1, \dots, j_h \in \FF_{p^2}$, or by the corresponding supersingular elliptic curves $E(j_1), \dots, E(j_h)$.

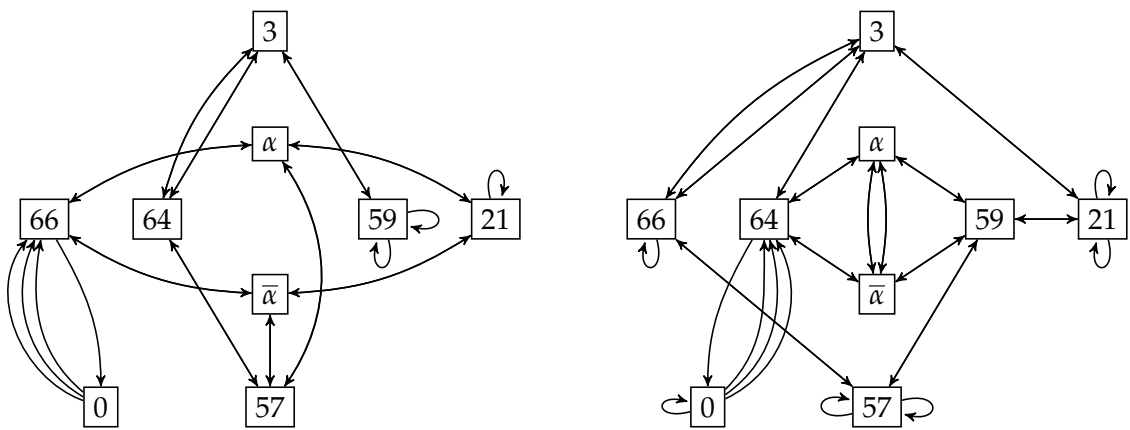
\begin{figure}
	\begin{tabular}{ccc}
		\begin{tikzpicture}[->, >=stealth', auto, semithick, node distance=2.5cm]
			
			\tikzstyle{every state}=[fill=none,draw=black,text=black]
			
			\node[draw] (0) at (3, 0) {$21$};
			\node[draw] (1) at (0, 1) {$\alpha$};
			\node[draw] (2) at (0, -1) {$\overline{\alpha}$};
			\node[draw] (3) at (-3, 0) {$66$};
			\node[draw] (4) at (0, -2.5) {$57$};
			\node[draw] (5) at (-2.25, -2.5) {$0$};
			\node[draw] (6) at (-1.5, 0) {$64$};
			\node[draw] (7) at (0, 2.5) {$3$};
			\node[draw] (8) at (1.5, 0) {$59$};
			
			\path (0) edge [loop above] node {} (0);
			\path (0) edge [bend left=15] node {} (2);
			\path (0) edge [bend right=15] node {} (1);
			
			\path (1) edge [bend right=15] node {} (3);
			\path (1) edge [bend left=15] node {} (0);
			\path (1) edge [out=305, in=55] node {} (4);
			
			\path (2) edge node {} (4);
			\path (2) edge [bend right=15] node {} (0);
			\path (2) edge [bend left=15] node {} (3);
			
			\path (3) edge [out=300,in=90] node {} (5);
			\path (3) edge [bend right=15] node  {} (2);
			\path (3) edge [bend left=15] node  {} (1);
			
			\path (4) edge node {} (6);
			\path (4) edge [out=55,in=305] node {} (1);
			\path (4) edge node {} (2);
			
			\path (5) edge [out=140, in=260] node {} (3);
			\path (5) edge [out=150, in=245] node {} (3);
			\path (5) edge [out=160,in=230] node {} (3);
			
			\path (6) edge node {} (4);
			\path (6) edge [bend left=15] node {} (7);
			\path (6) edge  node {} (7);
			
			\path (7) edge node {} (8);
			\path (7) edge node {} (6);
			\path (7) edge [bend right=15] node {} (6);
			
			\path (8) edge [loop right] node {} (8);
			\path (8) edge [loop below] node {} (8);
			\path (8) edge node {} (7);
			
		\end{tikzpicture}
		& \hspace{20pt} &
		\begin{tikzpicture}[->, >=stealth', auto, semithick, node distance=2.5cm]
			
			\tikzstyle{every state}=[fill=none,draw=black,text=black]
			
			\node[draw] (0) at (3, 0) {$21$};
			\node[draw] (1) at (0, 1) {$\alpha$};
			\node[draw] (2) at (0, -1) {$\overline{\alpha}$};
			\node[draw] (3) at (-3, 0) {$66$};
			\node[draw] (4) at (0, -2.5) {$57$};
			\node[draw] (5) at (-2.25, -2.5) {$0$};
			\node[draw] (6) at (-1.5, 0) {$64$};
			\node[draw] (7) at (0, 2.5) {$3$};
			\node[draw] (8) at (1.5, 0) {$59$};
			
			\path (0) edge  node {} (7);
			\path (0) edge  node {} (8);
			\path (0) edge [loop above] node {} (0);
			\path (0) edge [loop below] node {} (0);
			
			\path (1) edge node {} (6);
			\path (1) edge [bend left=10] node {} (2);
			\path (1) edge node {} (8);
			\path (1) edge [bend right=10]  node {} (2);
			
			\path (2) edge [bend left=10] node {} (1);
			\path (2) edge node {} (8);
			\path (2) edge [bend right=10] node {} (1);
			\path (2) edge node {} (6);
			
			\path (3) edge [loop below] node {} (3);
			\path (3) edge node {} (4);
			\path (3) edge  node {} (7);
			\path (3) edge [bend left=15] node {} (7);
			
			\path (4) edge node {} (3);
			\path (4) edge node {} (8);
			\path (4) edge [loop right] node {} (4);
			\path (4) edge [loop left] node {} (4);
			
			\path (5) edge [loop left] node {} (5);
			\path (5) edge [out=45, in=270] node {} (6);
			\path (5) edge [out=35, in=285] node {} (6);
			\path (5) edge [out=25, in=300] node {} (6);
			
			\path (6) edge [out=240, in=90] node {} (5);
			\path (6) edge node {} (7);
			\path (6) edge node {} (2);
			\path (6) edge node {} (1);
			
			\path (7) edge node {} (6);
			\path (7) edge  node {} (0);
			\path (7) edge  node {} (3);
			\path (7) edge [bend right=15] node {} (3);
			
			\path (8) edge node {} (4);
			\path (8) edge  node {} (0);
			\path (8) edge node {} (2);
			\path (8) edge node {} (1);
			
		\end{tikzpicture}
	\end{tabular}
	\caption{$\scrG(101,2)$ on the left and $\scrG(101,3)$ on the right. Vertices are labelled by $j$-invariants of supersingular curves over $\FF_{101^2}$, where $\alpha,\overline{\alpha}\in\FF_{101^2}$ are the roots of $x^2 + 27x + 54$. Each single-headed arrow is a directed edge of the graph, and each double-headed arrow corresponds to two directed edges, one in each direction.}\label{fig:supsing examples}
\end{figure}

\subsubsection{Variants} Fix a positive integer $n$, not divisible by $p$ or $\ell$. One can enrich the elliptic curves by a choice of a point of order $n$ and construct a supersingular isogeny graph $\scrG(p, \ell, \Gamma_1(n))$. The vertices correspond to $\fpbar$-isomorphism classes of pairs $(E, P)$, where $E$ is supersingular and~$P$ a point of exact order $n$, and edges correspond to isogenies $f\colon (E_1, P_1) \arr (E_2, P_2)$, such that~$f$ is an isogeny of degree $\ell$ such that $f(P_1) = P_2$, considered up to automorphisms of $(E_2, P_2)$. 
The reader familiar with modular curves will have no difficulty imagining other variants such as $\scrG(p, \ell, \Gamma_0(n))$ and $\scrG(p, \ell, \Gamma(n))$; collectively these graphs are referred to as supersingular isogeny graphs {\bf with level structure}.
These are interesting variants on which there has been some work recently \cite{Arpin, GK, PW, Roda}. Note that there is a natural morphism from any such  graph to $\scrG(p, \ell)$. 

In another direction, for the supersingular graph $\scrG(p, \ell)$ one can extract the vertices defined over $\FF_p$. There are about $\sqrt{p}$ of these; more precisely, letting $h$ be the class number of $\QQ(\sqrt{-p})$, there are $h, 2h$ or $h/2$ if $p\equiv 7\pmod{8}, 3\pmod{8}$ or $1 \pmod{4}$, respectively. For two vertices $E_i, E_j$ in this set, we include all the same edges from $E_i$ to $E_j$ as in the original graph $\scrG(p, \ell)$. This gives the so-called {\bf spine} of the supersingular graph $\scrG(p, \ell)$. Another variant is to consider supersingular elliptic curves defined over $\FF_p$ and $\FF_p$-rational isogenies of degree $\ell$ between them, up to $\FF_p$-rational automorphisms of the target. Such studies are motivated by cryptographic considerations. These have been studied in the literature, see \cite{Adj, Arpin2, DG} and references therein.

 \subsection{Properties of supersingular isogeny graphs} 
  \subsubsection{Degree} Since $\ell\neq p$, for every elliptic curve $E$ over $\fpbar$ we have $E[\ell]\cong (\ZZ/\ell \ZZ)^2$. As a result, $E$ has  exactly $\ell+1$ subgroups of order $\ell$ and so $\scrG(p, \ell)$ has out-degree $\ell+1$. Note that by the definition of edges, if there are $a$ edges from $E_1$ to $E_2$, then there are in fact $a\cdot \sharp \Aut(E_2)$ isogenies $f\colon E_1 \arr E_2$ of degree $\ell$. Every such  isogeny $f$ has a dual isogeny $f^\vee\colon E_2 \arr E_1$. This shows that if there is an edge from $E_1$ to $E_2$ then there is an edge from $E_2$ to $E_1$, and in fact
  \begin{align}\label{eq: in out aut}
  	\sharp \{ {\rm edges\;} E_1 \arr E_2 \}\cdot \sharp \Aut (E_2) =  \sharp \{ {\rm edges\;} E_2 \arr E_1 \}\cdot \sharp \Aut (E_1).
  \end{align}
Thus, if $\Aut(E_1) = \{ \pm 1\}$, and every neighbouring vertex $E_2$ of $E_1$ also has $\sharp \Aut(E_2) = \{ \pm 1\}$, then $E_1$ has in-degree $\ell+ 1$ as well. If $p \equiv 1 \pmod{12}$ then every supersingular elliptic curve $E$ has $\sharp \Aut(E) = \{ \pm 1\}$, and so $f \mapsto f^\vee$ establishes a bijection on edges. Identifying each isogeny with its dual allows us to consider $\scrG(p, \ell)$ as undirected graph of degree $\ell+1$. 

In general there may be vertices $E$ with extra automorphisms. By  Deuring's lifting theorem, these would be the elliptic curves with $j = 0$ or $1728$, provided they are supersingular; as we have seen, this depends only on the congruence class of $p$ modulo $12$. For $p=2, 3$ there is only one supersingular $j$-invariant, which is the $j$ invariant $0$. Whenever $j=0$ or $1728$ is a supersingular $j$-invariant, there is no longer a one-to-one correspondence between inward edges and outward edges at $j$ or at neighbours of $j$; the in-degree at $j$, or its neighbours, can be computed using (\ref{eq: in out aut}). See Figure~\ref{fig:supsing examples} for an example of this behaviour; note that $j=0$ is the unique supersingular $j$-invariant in $\FF_{101^2}$ with extra automorphisms. 

\subsubsection{Brandt matrices}\label{subsubsec: brandt matrices} The reference \cite{Gross} is very useful here. Enumerate the supersingular curves as $E_1, \dots, E_h$, where $h = h_p$ is the class number of $B_{p, \infty}$ and $E_i:=E(j_i)$. For every integer $n\geq 0$, define the {\bf Brandt matrix} $B(n)$ to the $h \times h$ matrix such that
\begin{align}\label{eq:brandt}
	B(n)_{ij} = \frac{1}{\sharp \Aut(E_j)} \cdot \sharp\{ f\colon E_i \arr E_j: \deg(f) = n\}.
\end{align}
The $n=0$ case uses our convention that the degree of the zero map is $0$.
For $n\geq 1$, we can also say that $B(n)_{ij}$ is the number of group schemes $H$ of rank $n$ of $E_i$ such that $E_i/H \cong E_j$. Note in particular that $B(1)$ is the identity matrix, and that $B(p)_{ii}$ is $1$ if $j(E_i) \in \FF_p$ and is equal to $0$ otherwise (this follows from (\ref{eq:factor through Fr})). That is, a supersingular $j$-invariant is in $\FF_p$ if and only if $E(j)$ has an endomorphism of degree $p$.  In particular, 
\[ \Tr(B(p)) = \text{number of supersingular $j$-invariants in $\FF_p$}.\]
Note that this trace is exactly the quantity $u_B$ discussed at the end of \S \ref{subsubsec: ideal classes}. More generally, $B(p)_{ij} = 1$ if $j(E_j) = j(E_i)^{p}$ and is $0$ otherwise. For prime $\ell\neq p$, $B(\ell)_{ij}$ equals the number of edges from $i$ to $j$ in $\scrG(p,\ell)$; that is, $B(\ell)$ is the adjacency matrix of the directed graph $\scrG(p,\ell)$.

In general, the traces of Brant matrices can be computed explicitly as follows \cite[Proposition 1.9]{Gross}: For any integer $n\geq 0$,
	\begin{align}\label{eq:brandt trace}
		 \Tr(B(n)) = \sum_{s\in \ZZ, s^2 \leq 4n} H_p(4n-s^2),
	\end{align}
where the $H_p(4n-s^2)$ are $p$-modified class Hurwitz class numbers; cf. loc. cit. (1.8). In loc. cit. (1.11), Gross uses this formula to deduce an explicit expression for the number of supersingular $j$-invariants over $\FF_p$ and over $\FF_{p^2}$. For further applications to the structure of supersingular isogeny graphs see \S \ref{subsubsec:loops multiple}.
 
The Brandt matrices are multiplicative in the sense that for $m,n\geq 1$,
\[ B(m)B(n) = B(mn), \quad \text{if } \gcd(m, n) = 1, \]
and they satisfy
\[ B(\ell^{a+1}) = B(\ell)B(\ell^a) - \ell B(\ell^{a-1}), \quad \ell\neq p \text{ prime, } a\geq 1.\]
Furthermore, $B(p)^2 = Id$.

\subsubsection{The modular polynomial} Let $n\geq 1$ be an integer. The {\bf modular polynomial}, initially defined over $\CC$, is the polynomial in two variables $j, j^\prime$
\[ \Phi_n(j, j^\prime), \]
that vanishes on $(j_1, j_2)\in\CC^2$ precisely when there is a cyclic degree $n$ isogeny $E_1 \arr E_2$ for elliptic curves $E_1,E_2/\CC$ with $j(E_i)=j_i$. Thus, the polynomial $\Phi_n(j_1, j_2)$ defines
a (typically singular) curve in the plane $\AA^2_{j_1, j_2}$ that is birationally equivalent to the modular curve $X_0(n)$. 
The definition implies that $\Phi_n(j_1, j_2)$ is a symmetric polynomial, and when considered as a polynomial in~$j_1$ it has degree equal to the number of cyclic subgroups of $(\ZZ/n\ZZ)$, which is nothing but the Dedekind $\psi$-function 
\[\psi(n) =n \prod_{p\vert n}(1 + \frac{1}{p}).\] 
Further, suppose that for $\tau\in\CC$ with $\Ima(\tau)>0$ we define $j(\tau)$ to be the $j$-invariant of the elliptic curve $\CC/(\ZZ\oplus\ZZ\tau)$. Then for every such $\tau$, $\Phi_n$ vanishes at the pair $(j(\tau), j(n\tau))$, and in fact every zero of $\Phi_n$ can be written in this way for appropriate $\tau$.
See\cite[5, \S\S 2-3]{Lang}.

A fact that is less straightforward to show is that normalizing  $\Phi_n(j_1, j_2)$ to be monic, we have $\Phi_n(j_1, j_2)\in \ZZ[j_1, j_2]$. We can therefore reduce $\Phi_n$ modulo $p$, and for every $p \neq n$ this reduction modulo $p$ vanishes on pairs $(j_1, j_2) \in \fpbar^2$ such that there is a cyclic degree $n$ isogeny from $E(j_1)$ to $E(j_2)$. In particular, if $E(j_1)$ is a supersingular elliptic curve then its neighbours in $\scrG(p, \ell)$ are the solutions of the degree $\ell+1$ monic polynomial $\Phi_\ell(j_1, j_2) \in \FF_p[j_2]$. 

The modular polynomials in $\ZZ[j_1, j_2]$ have enormous coefficients. For example, 
\begin{multline*}\Phi_2(j_1, j_2) =\\ \footnotesize{\text{$j_1^3 + j_2^3 + 1488(j_1^2j_2 + j_2^2j_1) -162000 (j_1^2 + j_2^2) - j_1^2j_2^2 + 8748000000 (j_1+j_2) + 40773375 j_1j_2 - 157464000000000,$}}\end{multline*}
and one of the coefficients of $\Phi_3$ is greater than $10^{21}$.
For more examples, see \cite{Sut}. Interesting work has been done on the calculation of $\Phi_n$, for instance see \cite{BLS}.

\subsubsection{Loops and multiple edges}\label{subsubsec:loops multiple} Fix a prime $\ell$. For $p\neq \ell$, the number of loops (edges from a vertex to itself) and multiple edges (pairs of edges with the same start and end vertices) of $\scrG(p, \ell)$ is bounded as a function of $\ell$ alone. To see why this is so, consider first loops. If $f\colon E \arr E$ is a loop then $f$ is an endomorphism of degree $\ell$ of $E$ and its characteristic polynomial is of the form $x^2 + ax + \ell$ for some $a\in \ZZ$. This $f$ generates a quadratic imaginary order and so we must have $a^2 < 4\ell$. Thus, there are finitely many such orders. By Deuring's lifting theorem $(E, f)$ can be lifted to $(\tilde E, \tilde f)$ and, considered as complex elliptic curves there are only finitely many possibilities for $\tilde E$; in fact, at most the sum of the class numbers of $\ZZ[x]/(x^2 + ax + \ell)$ and of all the orders of $\QQ[x]/(x^2 + ax + \ell)$ containing it. One can make this precise: the number of loops in $\scrG(p, \ell)$ is none other then the trace $\Tr(B(\ell))$ of the $\ell$-th Brandt matrix, as given in (\ref{eq:brandt trace}).

The situation for multiple edges is similar. 
Suppose $f_1, f_2\colon E_1 \arr E_2$ define multiple edges. Assume for simplicity that $\Aut(E_1)=\{\pm 1\}$, as the analysis is more involved when $E_1$ has extra automorphisms. Then $f_2^\vee f_1$ is an endomorphism of $E_1$ of degree $\ell^2$. If $\Ker(f_2^\vee f_1) =E_1[\ell]$ then $f_2^\vee f_1 = [\pm\ell]$, and so $f_1 = \pm f_2$, which means that they define the same edge, contrary to hypothesis. Thus, $f_2^\vee f_1$ must be a non-integer endomorphism of $E_1$ with degree $\ell^2$. 
By the same technique as for loops, the number of curves with a non-integer endomorphism of degree $\ell^2$ is bounded as a function of $\ell$ alone. For a detailed study see \cite[Lemma 2.7]{Arpin2},\cite{Ghantous}.

\subsubsection{Graph automorphisms} Given (any) two elliptic curves $E_1, E_2$ over a perfect field $k$, base change along the Frobenius map provides a homomorphism of rings 
\[ \Hom_k(E_1, E_2) \arr \Hom_k(E_1^{(p)}, E_2^{(p)}),\quad f \mapsto f^{(p)}.\]
This is entirely obvious when phrased in more generality:  for an injection of fields $k \arr k_1$ there is a ring homomorphism  
\[ \Hom_k(E_1, E_2) \arr \Hom_{k_1}(E_1\otimes_kk_1, E_2\otimes_kk_1),\quad f \mapsto f\otimes Id, \]
only that here the field injection is $\Fr\colon k \arr k$ and we denote the resulting homomorphism $f^{(p)}$. Since $k$ is perfect, we likewise have a base-change relative to $\Fr^{-1}$, and so there is an isomorphism 
\[ \Hom_k(E_1, E_2) \cong \Hom_k(E_1^{(p)}, E_2^{(p)}). \]
We conclude that there is an automorphism of the graph $\scrG(p, \ell)$ induced by Frobenius: it takes a vertex $E(j)$ to $E(j)^{(p)} \cong E(j^p)$, and takes an edge corresponding to $f\colon E_1 \arr E_2$ (up to $\Aut(E_2)$) to the edge $f^{(p)}\colon E_1^{(p)} \arr E_2^{(p)}$ (up to $\Aut(E_2^{(p)})$). Further, as all our objects -- both the elliptic curves and their isogenies -- are defined over $\FF_{p^2}$, this automorphism of $\scrG(p, \ell)$ is an involution; we will denote it also by~$\Fr$. As we shall explain in \S \ref{subsec: rigiditiy} below, under quite general conditions, this is essentially the only non-trivial automorphism of $\scrG(p, \ell)$.

\subsubsection{Connectedness and expansion}\label{subsec:connected and expansion} The graphs $\scrG(p, \ell)$ are connected. We sketch two proofs of this fact; each relies on a non-trivial theorem. The first method uses a local-global principle for representability of integers by quadratic forms, as in the proof of Theorem~\ref{thm: special generators for a lattice}; the second uses modular forms, starting a thread that will be continued further in \S\ref{sssection: thetas}--\ref{sssection: vertex functions}. These two perspectives are deeply related, as will be explored in \S\ref{section6}.

\medskip

\id {\bf A.} Let $Q$ be a positive-definite integral quadratic form in $n\geq 4$ variables and discriminant $d$. Assume that $q$ represents an integer $N$ everywhere locally and that $\gcd(N, d) = 1$. That is, one can solve $q(x_1, \dots, x_n)$ in $q$-adic integers for every prime $q$. Then, if $N\gg 0 $, $Q$ represents~$N$ over~$\ZZ$. For example, one may use \cite[Theorem 6.3]{Hanke1} and that a quadratic form can be anisotropic only at primes dividing its discriminant \cite[Remark 3.8.1]{Hanke1}.

Let $E, E^\prime$ be supersingular elliptic curves. We apply the theorem for $Q = \deg$ on $\Hom(E, E^\prime)$ and $N = \ell^k$. The local conditions require us to check that $\det$ on $M_2(\ZZ_q)$ represents $N$, which is obvious (take the matrix $\diag(1, N)$) and that $\det$ on the maximal order $\left\{ \left(\begin{smallmatrix}a & pb \\
b^\sigma & a^\sigma
\end{smallmatrix}\right): a, b \in \calO_{\QQ_{p,2}} \right\}$ represents $N$; this amounts to solving $aa^\sigma - pbb^\sigma = N$ for $a, b \in \calO_{\QQ_{p,2}}$, which is possible by local class field theory but can also be done ``by hand''. Thus, for every pair of vertices $E, E^\prime$, there is an isogeny $f\colon  E \arr E^\prime$ of degree $\ell^k$ for some $k$. Let $K_k = \Ker(f)$ and choose subgroups such that 
\[ K_k \supset K_{k-1} \supset \dots \supset K_1 \supset K_0=\{0\}, \quad \sharp K_i = \ell^i.\]
We  obtain a sequence of $\ell$-isogenies
\[ E = E/K_0 \arr  E/K_1 \arr \cdots \arr E/{K_{k-1}} \arr E/K_k \cong E^\prime,\]
which defines a path of length $k$ from $E$ to $E^\prime$ in $\scrG(p, \ell)$. Incidentally, note that if $k\gg 0$ then for every supersingular $E, E^\prime$ there is an isogeny $E \arr E^\prime$ of degree $\ell^k$ and also an isogeny of degree~$\ell^{k+1}$; this proves that the graphs $\scrG(p, \ell)$ are not bipartite. 

\medskip

\id {\bf B.} The adjacency matrix of the oriented graph $\scrG(p, \ell)$ is conjugate to the matrix of the Hecke operator $T_\ell$ acting on modular forms of weight $2$ on $\Gamma_0(p)$ \cite[II \S6]{Eichler1}, \cite[\S9]{Eichler2}. Now, for a finite oriented graph with out-degree $\ell+1$ the number of connected components of the graph is the multiplicity of the eigenvalue $\ell+1$ of the adjacency matrix $A= (a_{ij})$, where $a_{ij}$ is the number of edges from the vertex $i$ to the vertex $j$ (and so $\sum_{j} a_{ij} = \ell+1$) at least if the graph has the property that when $i$ is connected to $j$ then also $j$ is connected to $i$, and so there is no ambiguity about the meaning of a connected component. This property holds for our graphs. Thus, one needs to show that the multiplicity of the eigenvalue $\ell+1$ for $T_\ell$ acting on modular forms $M_2(\Gamma_0(p))$ is $1$. In fact, there is a decomposition of Hecke modules,
\[ M_2(\Gamma_0(p)) = \CC \cdot \scrE_2 \oplus S_2(\Gamma_0(p)),\]
where $\scrE_2$ is an {\bf Eisenstein series} for which $T_\ell \scrE_2 = (\ell+1) \scrE_2$. For any eigenform $f\in S_2(\Gamma_0(p))$, say $T_\ell f = \lambda_f f$, one has the Deligne bound $|\lambda_f|< 2 \sqrt{\ell}$. Thus, the eigenspace for the eigenvalue $\ell+1$ is $1$-dimensional, and one concludes that $\scrG(p, \ell)$ is connected. In fact, this also shows that the graph $\scrG(p, \ell)$ is Ramanujan in the sense of \cite{LPS}; for this statement it is perhaps best to assume that $p\equiv 1 \pmod{12}$ in which case we can think about $\scrG(p, \ell)$ as an undirected graph.

The Eisenstein series $\scrE_2$ is known explicitly\cite[(5.7)]{Gross}:
\begin{align}\label{eq:eisenstein p}
	\scrE_2 = \frac{p-1}{24} + \sum_{m\geq 1} \sigma(m)_p q^m,
\end{align}
where $\sigma(m)_p = \displaystyle\sum_{d\vert m,\; p\nmid d} d$. For any $m\geq 1$, $T_m(\scrE_2) = \sigma(m)_p \scrE_2$, and in particular $T_\ell \scrE_2 = (\ell+1) \scrE_2$.

\subsubsection{The Deuring correspondence}\label{subsubsec: deuring corr} Some good references for this section include \cite{Kohel} and \cite[Chapter 42]{Voight}.
As above, we list the supersingular $j$-invariants $j_1,\ldots,j_h$ with $h=h_{B_{p,\infty}}$, and let $E_i:=E(j_i)$ for each $i$. We will fix a base point $j_1$; note that by reordering, any supersingular $j$-invariant can be used as $j_1$. We let $\calO:=\End(E_1)$, a maximal order in the quaternion algebra $\calO\otimes \QQ\cong B_{p,\infty}$.

As described in \S \ref{subsec:Hom quadratic module}, for every $i=1,\ldots, h$, $\Hom(E_1,E_i)$ is isomorphic as a right $\calO$-module to some right ideal of $\calO$. Explicitly, if we take any isogeny $\phi\colon E_i\to E_1$, then 
\[I_{i}:=\phi\circ \Hom(E_1,E_i)\]
is a subset of $\calO=\End(E_1)$, and in fact a right ideal of $\calO$; the right ideal class does not depend on the choice of $\phi$. For $i=1$ we can take $\phi=\text{id}$ and therefore $I_1=\calO$. The {\bf Deuring correspondence} says that:
\begin{enumerate}
	\item $I_1, \dots, I_h$ form a complete set of representatives for the right ideal classes of $\calO_i$;
	\item The degree map on $\Hom(E_1,E_i)$ corresponds to $\frac{1}{\Nm(I_i)}\Nm(\cdot)$ on $I_i$;
	\item The left order of $I_i$ is isomorphic to $\End(E_i)$. 
\end{enumerate}
From (2) we see that there is an edge from $j_1$ to $j_i$ in $\scrG(p, \ell)$ if and only if the quadratic form $\frac{1}{\Nm(I_i)}\Nm(\cdot)$ on $I_i$ represents $\ell$. Moreover, $j_1\in\FF_p$ if and only if the norm form on $I_1=\calO$ represents~$p$. For $i\neq 1$, we have $\End(E_i) \cong \End(E_1)$ if and only if the elliptic curves~$E_i$ and~$E_1$ are the Frobenius base-change of each other, which is the case if and only if $\frac{1}{\Nm(I_i)}\Nm(\cdot)$ represents~$p$. One concludes that the class number $h_{B_{p, \infty}}$ is the number of vertices of $\scrG(p, \ell)$, while the type number $t_{B_{p, \infty}}$ (as in \S \ref{subsubsec: ideal classes}) is the number of orbits for the automorphism $\Fr$ of the graph.

\medskip

\id
Define the {\bf supersingular polynomial} 
\[ S_p(x) = \prod_{j \in \fpbar \text{ supersingular}} (x - j).\]
This is a polynomial of degree $h_{B_{p, \infty}}$ belonging to $\FF_p[x]$ that factors as a product of linear and quadratic terms over $\FF_p$. The number of linear terms is the number of supersingular $j$-invariants in $\FF_p$ and the number of quadratic terms is the number of pairs $\{j, j^p\}$ where $j$ is a supersingular $j$-invariant not in $\FF_p$. Thus, the number of irreducible factors of $S_p$ is the type number $t_{B_{p, \infty}}$. Here are a few examples: 
\begin{align*}
	S_{37}(x) &= (x-8)(x^2 - 6x - 6)\\
	S_{47}(x) &= x(x-1728)(x-9)(x-10)(x+3)\\
	S_{73}(x) &= (x-9)(x+17)(x^2-5x+9)(x^2-16x+8)\\
	S_{101}(x) &= x(x-3)(x-21)(x-57)(x-59)(x-64)(x-66)(x^2+27x+54).
\end{align*}
The primes $p$ for which $S_p(x)$ splits into linear factors -- in other words, for which all supersingular $j$-invariants are defined over $\FF_p$ -- are $p\leq 31$ and $p=41,47,59,71$ (these so-called ``supersingular primes'' are exactly the primes for which the quotient of the modular curve $X_0(p)$ by the Atkin-Lehner involution $w_p$ has genus $0$, and are also exactly the prime factors of the order of the monster group). For all other primes, $S_p(x)$ has at least one quadratic factor.

\medskip

\id Based on the Deuring correspondence, one can define the supersingular graphs $\scrG(p, \ell)$ and the Brandt matrices without any reference to elliptic curves. Namely, start with a choice of a maximal order $\calO_1$ of $B_{p, \infty}$ and let $I_1, \dots, I_h$ be representatives for its right ideal classes. Let $\calO_i := \calO_\ell(I_i)$. Every maximal order in $B_{p, \infty}$ is conjugate to some $\calO_i$ (though most conjugacy classes of maximal orders appear twice in the list $\calO_1, \dots, \calO_h$). The vertex~$1$ is connected to the vertex $i$ in $\scrG(p, \ell)$ by the number of times the form $\frac{1}{\Nm(I_i)}\Nm(\cdot)$ on $I_i$ represents $\ell$, divided by $\sharp \calO_i^{\times}$. The same procedure can now be done for the order $\calO_i$ to determine the edges from the vertex $i$ to any vertex $j$ -- there is no need to coordinate the choice of representatives for right ideal classes with those already made. Thus, the construction of the Brandt matrices can be done without any reference to supersingular elliptic curves. This allows the construction to be significantly generalized; for instance, Brandt matrices can also be defined for quaternion algebras over totally real fields \cite{Eichler2}. See further comments in \S \ref{sec: ssg for tot real}.

However, it is important to stress that even if the graph $\scrG(p,\ell)$ can be defined equally well using elliptic curves or quaternion ideals, these two implementations have drastically different security analyses when considered for cryptographic applications. In particular, path-finding on $\scrG(p,\ell)$ is believed to be hard when the vertices are labelled by elliptic curves or their $j$-invariants, but is known to be easy when the vertices are labelled by quaternion ideals. See \S \ref{subsubsec:security} for more detail on this.

\subsubsection{Theta functions} \label{sssection: thetas} For two supersingular elliptic curves $E_1, E_2$ we have a rank $4$ quadratic module $(\Hom(E_1, E_2), \deg)$ that can be identified with the quadratic module $(I, \frac{1}{\Nm(I)} \Nm(\cdot))$ for a suitable ideal $I$ in $B_{p, \infty}$. Since all maximal orders are everywhere locally conjugate and all maximal orders in a quaternion algebra over $\QQ_p$ have class number one, one concludes (as we have already mentioned) that the quadratic modules $(\Hom(E_1, E_2), \deg)$ have discriminant $p^2$ and level $p$, and are all in the same genus. Further, any integral quadratic form in this genus is realized as $(\Hom(E_1, E_2), \deg)$ for some $E_1, E_2$. 

We define the {\bf theta function} of this quadratic module by
\[ \Theta_{E_1, E_2}(q) := \sum_{n=0}^\infty \sharp \{f\in \Hom(E_1, E_2): \deg(f) = n\} \cdot q^n = \sum_{n=0}^\infty r_{E_1, E_2}(n)\cdot q^n. \]
If $E_1$ and $E_2$ correspond to ideal classes $I_i$ and $I_j$ respectively under the Deuring correspondence, then $r_{E_1,E_2}(n)$ is nothing other than $2w_j B(n)_{ij}$ (this holds for all $n\geq 0$ because of the way we chose to define $B(0)$). By the general theory of theta functions associated to quadratic forms, the theta function $\Theta_{E_1, E_2}$ evaluated at $q=e^{2\pi i z}$ is a weight $2$ modular form on $X_0(p)$. A version of Eichler's basis problem (which is a theorem in this case) asserts that the set of weight~$2$ modular forms 
\[\{\Theta_{E_1, E_2}: E_1, E_2 \;\text{\rm supersingular elliptic curves}\}\]
spans $M_2(\Gamma_0(p))$. Note that this is a set of roughly $\binom{p/12}{2}$ elements, while the dimension of $M_2(\Gamma_0(p))$ is about $p/12$. Thus, there must be many linear relations between these theta functions. There are some obvious relations that hold for all supersingular $E_1,E_2$:
\begin{itemize}
\item $\Theta_{E_1, E_2} = \Theta_{E_2, E_1}$, because $f\mapsto f^\vee$ is an isometry $\Hom(E_1,E_2)\to \Hom(E_2,E_1)$.
\item $\Theta_{E_1, E_2} = \Theta_{E_1^{(p)}, E_2^{(p)}}$, because $f\mapsto f^{(p)}$ is an isometry $\Hom(E_1,E_2)\to \Hom(E_1^{(p)},E_2^{(p)})$. 
\end{itemize}
Aside from these relations and those that can be derived from them, there is no reason to expect any of the linear relations to take the very simple form of an equality between two theta functions. However, such coincidences do occur (in fact, already for $p=67$); cf. \cite{GL} and references therein. 

Fix a supersingular elliptic curve $E_1$ and let $E_1, \dots, E_h$ be representatives for the isomorphism classes of supersingular elliptic curves. Then letting 
\[2w_j:=\sharp \Aut(E_j),\] we have
\[ \Theta(q) := \sum_{j=1}^h \frac{1}{2w_j} \Theta_{E_1, E_j}(q) = \scrE_2(q), \]
where $\scrE_2$ is the unique weight $2$ Eisenstein series on $\Gamma_0(p)$ with leading coefficient $\frac{p-1}{24}$ appearing in \S \ref{subsec:connected and expansion} and for which we have an explicit formula. 
This result is a special case of the Siegel-Weil formula, as we explain in \S\ref{subsec: eisenstein} below.

One also knows the action of Hecke operators on the theta functions. Simplifying the notation by writing $\Theta_{ij} = \Theta_{E_i, E_j}$, \cite[Proposition 5.5]{Gross} or \cite{Eichler2} prove that 
\begin{equation}\label{eqn: Tm theta} T_m \Theta_{ij} = \sum_k B(m)_{ik} \Theta_{kj} = \sum_k B(m)_{kj}\Theta_{ik}.\end{equation}

\subsubsection{Functions on the vertices of $\scrG(p, \ell)$ and modular forms}\label{sssection: vertex functions} The second proof that the supersingular graphs are connected (\S\ref{subsec:connected and expansion}) used the fact that $B(\ell)$, the adjacency matrix of $\scrG(p, \ell)$, also represents the action of the Hecke operator $T_\ell$ in its action on $M_2(\Gamma_0(p))$ in \textit{some} basis. Thus, there is an isomorphism of the Hecke algebra (with $\CC$-coefficients, say) of $X_0(p)$ with the algebra generated over $\CC$ by the Brandt matrices $\{B(n): n \geq 0\}$, such that for $n$ prime to $p$, $B(n)$ corresponds to the Hecke operator $T_n$, and $B(p)$ corresponds to the Atkin-Lehner involution. We will denote both algebras by $\TT$.

There is a compatible isomorphism between the $\TT$-module we call ${\rm Fun}(p, \ell)$, which is simply the complex vector space of $\CC$-valued functions on the set of supersingular $j$-invariants under the action of the Brandt matrices, and the module $M_2(\Gamma_0(p))$ with the action of the Hecke operators. We can already find a connection between these two modules in the construction of the theta functions (\ref{eqn: Tm theta}), but to provide an explicit isomorphism one needs the following construction:

Enumerate the supersingular $j$-invariants as $j_1, \dots, j_h$ (in an arbitrary fashion) and let $\delta_i$ be the delta function at $j_i$, namely, taking value $1$ on $j_i$ and value $0$ at any other $j_n$. One defines a Hermitian pairing on ${\rm Fun}(p, \ell)$, by defining
\[ \langle \delta_i, \delta_j \rangle = w_i\cdot  \delta_{ij}\]
(here $\delta_{ij}$ is Kronecker's delta, equal to $1$ if $i = j$ and $0$ otherwise). 
The dual basis $\delta_1^\vee, \dots, \delta_h^\vee$ is simply $\delta_i^\vee = \frac{1}{w_i} \delta_i$. If $u = \sum \alpha_i \delta_i\in {\rm Fun}(p, \ell)$, let $\deg(u) = \sum {\alpha_i}$; if $v = \sum \beta_i \delta_i^\vee\in {\rm Fun}(p, \ell)^\vee$, let $\deg v = \sum \frac{\beta_i }{w_i}$.
One lets $\TT$ act on ${\rm Fun}(p, \ell)$ by the transpose action so that that
\[ B(m)\delta_i = \sum_{j=1}^h B(m)_{ij} \delta_j.\]
Then the isomorphism of Hecke modules
\[ {\rm Fun}(p, \ell)\otimes_\TT {\rm Fun}(p, \ell)^\vee \arr M_2(\Gamma_0(p))\]
is given by \cite[Proposition 5.6]{Gross}
\[ \delta_i \otimes \delta_j^\vee \mapsto\frac{1}{2w_j} + \sum_{m\geq 1} \langle B(m)\delta_i, \delta_j^\vee \rangle q^m = \frac{1}{2w_j}  + \sum_{m\geq 1} B(m)_{ij}q^m =\frac{1}{2w_j}\Theta_{E_i, E_j} .\]
More generally, for $u \in {\rm Fun}(p, \ell), v \in {\rm Fun}(p, \ell)^\vee$, 
\[ u \otimes v \mapsto\frac{\deg u \cdot \deg v}{2} + \sum_{m\geq 1} \langle B(m)u, v \rangle q^m.\]

\section{Applications to cryptography} 

\id There are several interesting applications of supersingular elliptic curves to cryptography. It was recently shown \cite{CD, MM} that one of the most prominent applications (SIKE) is not secure, at least under the original implementation (see however the recent preprint \cite{Dartois} that suggests that minor adjustments to the implementation will not be able to prevent the attack). Others, though very promising, are still under close scrutiny.  That said, such applications instigated further research into the arithmetic of elliptic curves and even abelian varieties, and created much interest in the cryptographic community. This, in our view, is ample justification for further study and for reviewing some of these primitives.

\subsection{Cryptographic hash functions} Let $S$ be the set of all sequences of $0, 1$ of any finite length. Thus, $S = \{0,1\}^\ast:= \cup_{n=0}^\infty \{0, 1\}^n$. Fix a positive integer $N$. A {\bf hash function} valued in $\{0, 1\}^N$ is a function
\[ h\colon S \arr \{0, 1\}^N.\]
One can think about it as associating to any given datum $s\in S$ a certain ``trace'', or ``footprint'',~$h(s)$ of~$s$. Of course, as $S$ is infinite, such a function cannot be one-to-one and evidently there will be infinitely many $s\in S$ with the same value $h(s)$. More than that, for any $n>N$ there will be two data $s_1, s_2$ of length $n$ with $h(s_1) = h(s_2)$.

Cryptographic hash functions have to satisfy several restrictions, the most essential of which is that for a given $n\gg N$ and $s_1 \in \{0, 1\}^n$ it should be computationally infeasible to find a different $s_2 \in  \{0, 1\}^n$ such that $h(s_1) = h(s_2)$. Another property is that the number of preimages $h^{-1}(v) \cap \{0, 1\}^n$ should converge quickly to $2^n/2^N$ (that is, uniform) as $n\arr \infty$. 

As a ``pseudo-example'' of an application of such functions, one can imagine the following scenario: Alice sends to Bob a large file $s\in S$, and Bob wants to verify that he received the file sent by Alice, and not a file that had been purposely manipulated by a third party. One option is for Alice to send the file again through a different communication channel (assuming that it is unlikely that a single third party will be able to compromise both channels), and for Bob to compare the files; unfortunately this uses a lot of extra time and bandwidth if $s$ is very large. A more efficient option is for Alice and Bob to agree on a hash function $h$ in advance (that need not be secret), and then for Alice to send the file $s$ in one channel and its hash value $h(s)$ in another (perhaps this channel is much more secure but cannot handle as much data). Then Bob can apply the hash function $h$ to the received file, and check that it agrees with the hash value sent by Alice. Since it is computationally infeasible to generate a file $s^\prime$ of the same length with the same hash value $h(s) = h(s^\prime)$, Alice and Bob can be reasonably confident that the file Bob received is the one sent by Alice.

\subsubsection{CGL hash functions}\label{subsec:CGL} The CGL hash function proposed in \cite{CGL} is based on using data $s\in S$ as instructions for traversing the supersingular graph $\scrG(p, \ell)$. The $j$-invariant that is the final vertex of this walk is the hash value; it is an element of $\FF_{p^2}$. We explain the procedure in some detail for the case $\ell = 2$, as in the literature this precise description is almost always omitted. To simplify further, choose $p\equiv 1 \pmod{12}$ so that every supersingular elliptic curve has only $\pm 1$ as automorphisms. We have in mind that $p$ is of cryptographic size, say $p \approx 2^{500}$,  and the file has size at least $10 \log(p)$, to err on the side of caution.

For each supersingular $j$-invariant $j\in\FF_{p^2}$ we fix a model $E(j)$ over $\FF_{p^2}$ satisfying $\Fr^2 = [-p]$ (\S\ref{subsec: model over Fp2}). Note that for any such curve $E=E(j)$ the $2$-torsion points $E[2](\fpbar)$ are all defined over~$\FF_{p^2}$: indeed, $\Fr^2$ acts on them by $-p \equiv 1 \pmod{2}$. 

We fix once and for all an ordering on $\FF_{p^2}$: for instance, we may fix an isomorphism $\FF_{p^2} \cong \ZZ/p\ZZ \oplus \ZZ/p\ZZ$ as groups, take on $\ZZ/p\ZZ$ the ordering $\bar 0<\bar1< \dots \overline{p-1}$, and take on $\FF_{p^2}$ the lexicographic order. Fix an initial supersingular $j$-invariant $j_0$ and its corresponding model $E_0:=E(j_0)$. Fix also a $2$-torsion point $P_0 = (x, 0) \in E_0(\FF_{p^2})$. 
 
Given a string $s=s_1\cdots s_n$ of $0$'s and $1$'s, we define a walk on $\scrG(p, 2)$ as follows. Suppose we have reached $j_i$ for some $i=0,\ldots,n-1$, and assume we have a $2$-torsion point $P_i\in E_i(\FF_{p^2})$ for $E_i=E(j_i)$. We label the other $2$-torsion points of $E_i$ as $Q_0=(x_0,0)$ and $Q_1=(x_1,0)$, enumerated so that $x_0 < x_1$ according to the fixed order on $\FF_{p^2}$. 
The bit $s_i$ is either $0$ or $1$ and hence determines a $2$-torsion point $Q_{s_i}\in \{ Q_0, Q_1\}$. Using V\'elu's formulas \cite{Velu}, we can explicitly compute an elliptic curve $E':= E_i/\langle Q_{s_i}\rangle$, together with a map $E_i\arr E'$. Let $j_{i+1}$ be the $j$-invariant of $E'$, so there is an isomorphism $E'\arr E_{i+1}:=E(j_{i+1})$ that is uniquely determined up to $\pm 1$. Since $\pm1$ conveniently acts trivially on $2$-torsion, the map $E_i[2]\to E_{i+1}[2]$ is uniquely determined. The image of $E_i[2]$ in $E_{i+1}$ (or equivalently, the kernel of the dual isogeny $E_{i+1}\arr E_i$) is generated by some $P_{i+1}\in E_{i+1}[2]$. Thus we obtain a curve $E_{i+1}$ and a $2$-torsion point $P_{i+1}$ on it, and we may continue the process.

After $n$ steps, having used $s_1, \dots, s_n$, we arrive at a pair $(E_n, P_n)$. The hash function value is $j(E_n) \in \FF_{p^2}$; there are approximately $\frac{p-1}{12}$ possible values for the hash function. The Ramanujan property of $\scrG(p,\ell)$ ensures that the pre-image sizes converge to a uniform distribution extremely rapidly, and for large graphs of small degree one can hardly do better in this aspect. 

\subsubsection{Security of CGL hash functions}\label{subsubsec:security}
Not every Ramanujan graph will do for the creation of cryptographic hash functions and indeed some other suggestions made in \cite{CGL} were quickly discovered to be insecure \cite{PLQ, TZ}. 
At this point in time, it is understood that the difficulty of breaking the CGL hash function described above rests on the difficulty of the following problem:

\medskip

\id \textit{Key computational problem:\newline
Given a supersingular elliptic curve $E$, calculate $\End(E)$. More precisely, given a Weierstrass equation for a supersingular elliptic curve $E$, produce a list of elements of $B_{p,\infty}$ that generate a maximal order isomorphic to $\End(E)$.}

\medskip As we have already remarked, there is an efficient algorithm to verify that the given curve $E$ is indeed supersingular \cite{SutherlandIdentify}, and this can be done without computing any non-integer endomorphisms of $E$. The problem above is key because if one can perform such a calculation efficiently, then one can reduce the problem of finding a path from $E_0$ to $E$ in $\scrG(p, \ell)$ -- and thereby generating another input hashing to the same value -- to a computation solely in the quaternion algebra~$B_{p, \infty}$. Indeed, the problem reduces to the following: given two maximal orders, $\calO_0\cong\End(E_0)$ and $\calO\cong \End(E)$, find an ideal $I$ such that 
\[ \calO_r(I) = \calO_0, \quad \calO_\ell(I) \cong \calO;\]
in addition, for any two-sided ideal $J$ of $\calO_\ell(I)$, find an element $\alpha\in JI$ with scaled norm $\frac{1}{\Nm(JI)}\Nm(\alpha)$ equal to a power of $\ell$. Indeed, as $J$ varies over two-sided ideals of $\calO_\ell(I)$, $JI$ attains all (one or two) right ideal classes of $\calO_0$ that correspond to curves $E'$ with $\End(E')\cong\calO$ under the Deuring correspondence; in particular, the curve $E$ is attained for some $J$. The element $\alpha$ then corresponds to an isogeny $E_0\to E$ of degree a power of $\ell$, which can be used to construct a path from $E_0$ to $E$ in $\scrG(p,\ell)$. This quaternion algebra variant of the problem can be solved efficiently; see \cite{EHLMP, KLPT, LP, PW, Wesolowski} for more detail.

Of course, one cannot say with absolute confidence whether a cryptographic protocol is secure until is has been broken. At best, one can hope for a proof showing that protocol is as secure as a computational problem that has withstood assaults for a longer time. The fact that the quaternionic algebra problem can be solved efficiently could be viewed as a sign that the CGL hash functions, or other protocols based on supersingular isogeny graphs, are not secure. However, consider the following analogy with the discrete logarithm problem, which is to solve $g^n=h$ for given $g,h\in \FF_q^\times$. Choosing a generator $\gamma$ of $\FF_q^\times$, one has an easy isomorphism $f\colon \ZZ/(q-1)\ZZ \arr \FF_q^\times, a\mapsto \gamma^a$, and solving $nf^{-1}(g)=f^{-1}(h)$ in $\ZZ/(q-1)\ZZ$ is very easy if $f^{-1}(g)$ and $f^{-1}(h)$ are known. Thus the hardness of the discrete log problem depends on the assumption that the inverse function $f^{-1}\colon\FF_q^\times \arr \ZZ/(q-1)$ is hard to compute.

By analogy, choosing a supersingular elliptic curve $E$ with known $\calO:= \End(E)$ is akin to choosing a primitive root $\gamma$, and this is not hard in practice. As already discussed, there is a bijection $f\colon{\rm Cl}(\calO) \arr \{ \text{supersingular elliptic curves}\}/\hspace{-3pt}\cong$, based on the Deuring correspondence, from right ideal classes of $\calO$ to supersingular elliptic curves; this bijection is easy to compute, and allows one to effectively navigate $\scrG(p, \ell)$. As in the case of the discrete log problem, the difficulty of computing $f^{-1}$ is the basis the security of cryptographic applications of $\scrG(p, \ell)$. 

\medskip

\id
In \cite{BCNEMP} one finds an elegant criterion as to when two closed paths based in $E$ in $\scrG(p, \ell)$ generate a subring of $\End(E)$ of finite index. First, note that a closed path $\gamma$ gives, by composing the isogenies corresponding to its edges, an endomorphism $f$ of $E$. To be precise, $f$ is not uniquely defined by $\gamma$ (since edges correspond to isogenies only up to automorphisms of the image), but under the hypothesis $p\equiv 1\pmod{12}$, $f$ is well-defined up to a sign. The degree of $f$ is $\ell^{{\rm length}(\gamma)}$. The criterion says (under the simplifying assumption $p \equiv 1 \pmod{12}$) that if two closed paths $\gamma, \gamma_1$ at $E$, neither with any backtracking, pass through different sets of vertices -- meaning one of the paths passes through a vertex through which the other does not -- then the two corresponding endomorphisms generate a subring of $\End(E)$ of finite index. 

It has been used as a heuristic in analyzing the security of the hash functions (e.g. in \cite[\S 3.3]{EHLMP}) that one can in fact generate the whole ring $\End(E)$ by using additional closed paths. This follows from Theorem~\ref{thm: generation of orders} that we present in \S\ref{subsec: Generation by isogenies} below.

\subsection{SIDH: A Diffie-Hellman protocol}\label{subsec: SIDH} De Feo, Jao and Pl\^ut suggested a protocol called {\bf SIDH} (short for ``Supersingular Isogeny Diffie-Hellman'') for establishing a secret key between two parties using supersingular elliptic curves and their isogenies \cite{dFJP}. In a sense, one can view their protocol as ``completing squares'' in the superposition of two supersingular graphs $\scrG(p, \ell_A)$ and $\scrG(p, \ell_B)$. The protocol proceeds roughly as follows: two distinct primes $\ell_A, \ell_B$ and powers $r_A, r_B$ are chosen in advance. (Additional conditions on these parameters are posed in \cite{dFJP}, but these are largely immaterial for explaining the main idea. Some of these constraints will become clear over the course of the explanation; for instance we will see that $r_A,r_B$ must be fairly large). A base supersingular elliptic curve $E$ is also chosen. 

The information $E, \ell_A^{r_A}, \ell_B^{r_B}$ is available for all, as well as a basis $P_A, Q_A$ for $E[\ell_A^{r_A}]$ and $P_B, Q_B$ for $E[\ell_B^{r_B}]$ . Alice secretly chooses integers $m_A, n_A$ not divisible by $\ell$, and secretly calculates the subgroup $H_A = \langle m_A P_A + n_A Q_A\rangle$, a subgroup of order $\ell_A^{r_A}$ inside $E[\ell_A^{r_A}]$. Independently of Alice and also in secret, Bob likewise chooses secret integers $m_B, n_B$ and computes a subgroup $H_B = \langle m_B P_B + n_B Q_B\rangle$ that is a subgroup of order $\ell_B^{r_B}$ inside $E[\ell_B^{r_B}]$.

Alice publishes the elliptic curve $E/H_A$ and the images $\tilde P_B, \tilde Q_B$ of the points $P_B, Q_B$ on the curve $E/H_A$. The data $E/H_A$ is given as a Weierstrass equation, say, and so its publication reveals no information about $H_A$ (if $r_A \gg 0$ there will be many $H_A$ leading to the same elliptic curve). Bob does the same and publishes $E/H_B$ and the image $\tilde P_A, \tilde Q_A$ of the points $P_A, Q_A$ on $E/H_B$. In the next step, Alice uses the points $\tilde P_A, \tilde Q_A$ on the curve $E/H_B$ and her secret $m_A, n_A$ to create a subgroup $\tilde H_A = \langle m_A\tilde P_A + n_A \tilde Q_A\rangle$ on $E/H_B$; this is the image of $H_A$ on $E/H_B$ under the isogeny $E \arr E/H_B$. Bob does the same using $\tilde P_B, \tilde Q_B$ on $E/H_A$ to get a subgroup $\tilde H_B = \langle m_B\tilde P_B + n_B \tilde Q_B\rangle$ on $E/H_A$. It is elementary to check that 
\[(E/H_A)/\tilde H_B \cong E/(H_A+ H_B) \cong (E/H_B)/\tilde H_A\]
and the secret Alice and Bob share is the $j$-invariant of the elliptic curve $E/(H_A+ H_B)$. See Figure~\ref{fig:sidh} for a summary.
\begin{figure}
\centering
\[ \xymatrix@C=1.5cm{& E\ar[dl]_{\text{Alice knows how to\qquad}}\ar[dr]^{\text{\qquad Bob knows how to}} & &\text{Public: } E, P_A, Q_A, P_B, Q_B\qquad\qquad\quad\\ E/H_A\ar[dr]_{\text{Bob knows how to\qquad}} &&E/H_B\ar[dl]^{\text{\qquad Alice knows how to}}&\text{Public: }E/H_A, E/H_B,  \tilde P_A, \tilde Q_A, \tilde P_B, \tilde Q_B\\ & E/(H_A + H_B) &&\longleftarrow\text{Shared secret\qquad\qquad\qquad\qquad}}\]
\caption{A summary of the SIDH protocol.}\label{fig:sidh}
\end{figure}
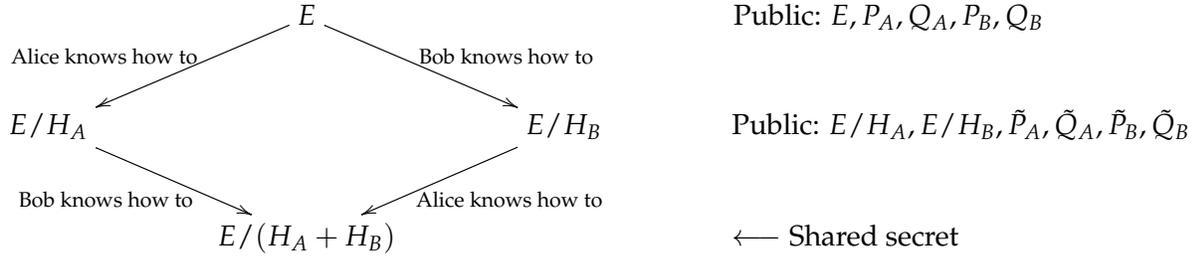
Note that if, for example, $r_A= r_B = 1$ then we are completing a square in the superposition of the graphs $\scrG(p, \ell_A)$ and $\scrG(p, \ell_B)$ such that parallel edges have the same degree. 

A complete implementation of a cryptographic protocol based on this method was submitted under the name {\bf SIKE} (short for ``Supersingular Isogeny Key Encapsulation'') to the NIST Post-Quantum Cryptography Standardization process in 2017 \cite{SIKE}.
Unfortunately, this elegant idea is not cryptographically secure, as shown by Castryck - Decru \cite{CD} and, independently, by Maino - Martindale, see \cite{MM} and references therein: they were able to show that the data of the auxiliary points $P_A,Q_A$ on $E$ together with their images $\tilde P_A,\tilde Q_A$ on $E/H_B$ is enough to efficiently compute an isogeny $E\to E/H_B$. Interestingly, these attacks use in a very strong way what one may call superspecial graphs associated to principally polarized abelian surfaces and $(2, 2)$-isogenies between them. This suggests that the study of higher dimensional analogues of the supersingular isogeny graphs is both arithmetically rich and has relevance to cryptographic protocols based on elliptic curves. Already there are many cryptographic protocols that take advantage of this extra structure. See for example  \S\ref{subsec: SQIsign variants} and \cite{Dartois}.

\medskip

\id There are other Diffie-Hellman protocols based on supersingular isogeny graphs
that do not require the data of auxiliary points and their images under a secret isogeny. This means that so far, they are immune to attacks of the form that cracked SIDH. A few notable examples include {\bf CSIDH}~\cite{CLMPR}, which restricts to supersingular elliptic curves defined over $\FF_p$, and {\bf OSIDH}~\cite{CK}, which enriches each supersingular curve with the data of a marked quadratic order in $\End(E)$. However, the additional structure does lead to some potential vulnerabilities, which have been explored in several works, for instance in \cite{BA, BIJ, BLMP,DdF}. More research is needed to determine whether these algorithms are viable for applications.

\subsection{SQIsign: a digital signature protocol} \label{subsec: SQIsign}  SQIsign (short for ``Short Quaternion and Isogeny Signature'') is another cryptographic protocol based on supersingular isogeny graphs. It was introduced by de Feo, Kohel, Leroux, Petit and Wesolowski \cite{dFKLPW}. To explain the idea we draw on the landmark papers \cite{DH, GMR}. The main challenge is to send messages together with  additional information -- the signature -- that authenticates the sender. To achieve this, the sender uses a one-way function $f$; that is, an invertible function $f$ that is easy to compute and hard to invert (at least without additional secret information). Given a message $M$, the signing party publishes the function $f$ and sends the message $M$ together with a signature $f^{-1}(M)$. The receiver, and anyone else, can authenticate the signature by verifying that $f(f^{-1}(M)) = M$. No one else can feasibly calculate $f^{-1}(M)$, and so the signature $f^{-1}(M)$ must be from the party that knows the secret inverse to $f$, namely the original sender. It is technically possible to ``imitate'' the original sender by coming up with a signature $S$ first and declaring the message to be $N:=f(S)$, but such a message $N$ is likely to be gibberish; in practice, no one else can feasibly come up with both a ``reasonable'' message $N$ together with a corresponding signature $f^{-1}(N)$, and thus it is not feasible to imitate the sender.

\medskip

\id The security of  SQIsign  relies on the hardness of the following computation problem:

\medskip

 \textit{Given a cyclic isogeny $f\colon E\to E^\prime$, find any other cyclic isogeny $g\colon E\to E^\prime$ with $\Ker(f) \neq \Ker(g)$.} 

\medskip
\id (If one specifies that both isogenies have degree a power of $\ell$, this is exactly the security assumption for the CGL hash functions.)

The key idea of SQIsign is the following. Suppose one is given an explicit cyclic isogeny from a curve $E_A$ to a curve $E_1$. By the hardness assumption, if all one knows is this one isogeny, it is believed to be extremely difficult to find any other cyclic isogeny from $E_A$ to $E_1$. However, if a party knows the endomorphism ring of $E_A$, then using quaternion methods they can produce a rich collection of isogenies from $E_A$ to $E_1$. So if Alice wants to sign a message, she will do so by proving that she has access to this rich collection of isogeny paths, and can create distinct cyclic isogenies between the two curves.

\medskip

The SQIsign protocol progresses roughly as follows; see \cite{dFKLPW} for details. First some public information is agreed on: a prime $p$ and a supersingular elliptic curve $E_0$ over $\FF_p$ with known endomorphism ring $\calO_0 = \End(E_0)$. If $p \equiv 3 \pmod{4}$, one may take the elliptic curve $E: y^2 = x^3 +x$ of $j$-invariant $1728$, whose endomorphism ring can be calculated to be $\ZZ[i, \frac{i+j}{2}, \frac{1+k}{2}]$, where $i^2 = -1, j^2 = -p$ as in \S\ref{subsubsection: Bp again}. 
To see this, on the one hand, we can calculate the discriminant of this order to check that it is maximal. On the other hand, it visibly contains $\ZZ[i]$. But $\ZZ[i]$ has class number $1$, and so by Deuring's lifting theorem there is a unique maximal order of $B_{p, \infty}$, up to isomorphism, that contains $\ZZ[i]$. Since $\End(E)$ clearly contains $\ZZ[i]$, it follows that the specified maximal order is isomorphic to $\End(E)$. 

In addition, there is a public algorithm that takes in a starting elliptic curve $E$ and an integer~$M$ and produces a cyclic isogeny from $E$ by ``following the directions given by $M$.'' An example of such an algorithm would be to convert $M$ to a sequence of $0, 1$ bits and apply the CGL hash function starting from $E$ (\S\ref{subsec:CGL}), but in general this algorithm is allowed to (and, in fact, should) use isogenies of multiple small prime degrees. We will see at the end of the section that for security reasons the directions should not depend on $M$ alone, but for simplicity we ignore this issue for now.

Before Alice signs any messages, she computes a random (secret) isogeny $\tau\colon E_0\to E_A$ to some $E_A$, and publishes $E_A$. The curve $E_A$ will be used to compute the verification function $f$, but since it is infeasible for anyone else to compute an isogeny from $E_0$ to $E_A$, $\tau$ will be used to compute the signature function $f^{-1}$.

Now suppose Alice wants to sign a message $M$. She first computes a secret random isogeny~$\psi$, originating at $E_0$, and ending at some $E_1$, that at the time being is kept secret. Alice then starts from $E_1$ and follows the directions given by $M$ to compute a cyclic isogeny $\varphi\colon E_1\to E_2$. Alice now has the complete information of an isogeny $\varphi\circ\psi\circ\tau^\vee\colon E_A\to E_2$, but in fact she knows more: since the endomorphism ring of $E_0$ is known, she can use $\tau$ to compute the endomorphism ring of $E_A$, as well as a right ideal of $\End(E_A)$ corresponding to $E_2$ under the Deuring correspondence. Thus she can use quaternion computations to find in this ideal any kind of isogeny $\sigma\colon E_A\to E_2$ she might want; for instance, she can ensure that $\varphi^\vee\circ\sigma\colon E_A\to E_1$ is cyclic. She then publishes the pair $(E_1,\sigma)$ as the signature on $M$. Note that $\sigma$, and any other isogeny, are published as paths, meaning as a sequence of cyclic isogenies of low degree. This is forced on all such protocols, because there is no other feasible way to write down an isogeny of large degree.

Once Alice's signature has been published, anyone can reproduce $E_2$ and $\varphi$ as these depend only on $E_1$ and $M$ that are public knowledge. To authenticate the signature Bob verifies that $\varphi^\vee\circ\sigma$ is cyclic, which he can do, as Bob, and any other party, knows $\varphi\colon E_1 \arr E_2$ and so also its dual $\varphi^\vee\colon E_2 \arr E_1$. The protocol is summarized in Figure~\ref{fig:sqisign}.
\begin{figure}
\centering
\[ \xymatrix@C=1.5cm{& E_0\ar[dl]_{\text{(Alice's secret) }\tau\;\;\;}\ar[dr]^{\;\;\;\psi\text{ (Alice's secret)}} & & \\ E_A\ar[dr]_{\text{(Alice's signature) }\sigma\;\;\;} & &E_1\ar[dl]^{\;\;\;\;\varphi(E_1,M) \text{ (public algorithm)}} &  \\ 
& E_2 & &} \]
\caption{A summary of the SQIsign protocol. All four curves are public; only $E_0$ has public endomorphism ring.}\label{fig:sqisign}
\end{figure}
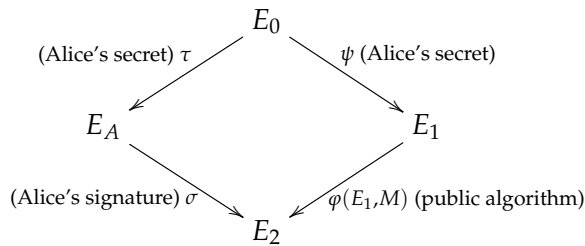

Note that once Alice has published her signature $(E_1, \sigma)$, if an adversary intercepting the communication could find an isogeny from $E_0$ to \textit{any} of the curves $E_1,E_2,E_A$, then they could compute the endomorphism rings of \textit{all} the curves involved, revealing Alice's secret and enabling them to forge her signature! This is why it is crucial that Alice starts by producing a curve $E_1$ that is very far from $E_0$. It is also crucial that Alice is able to produce multiple isogenies from $E_A\to E_2$ that proceed in very different directions: if Alice just publishes $\varphi\circ\psi\circ\tau^\vee$ for instance, then anyone could follow this path partway to find an isogeny $E_A\to E_0$, again revealing her secret.  The key observation is that quaternion methods allow one to choose an alternate path of isogenies $E_A\to E_2$ that completely avoids $E_0$, as well as any of the curves along the path $\varphi$ from $E_1$ to $E_2$.

If a fraudster wanted to imitate Alice, they could produce a curve $E_1$ however they wanted, and follow the directions given by a (fake) message $N$ of their choice to reach a curve $E_2$. But however they try, there will be difficulties in producing a valid signature.
\begin{itemize}
	\item If they choose $E_1$ to have known endomorphism ring (for instance via an isogeny from $E_0$), then it will likely be impossible to find any isogeny connecting $E_A$ to $E_2$.
	\item The fraudster could let $E_1$ be the endpoint of some isogeny $\mu$ from $E_A$; they would then be able to compute an isogeny $\sigma\colon E_A\to E_2$, namely the composition $\varphi\circ\mu$. But in this case $\varphi^\vee\circ\sigma=[\deg\varphi]\circ \mu$ would be very far from cyclic. It is infeasible in this case to find an isogeny $\sigma$ for which $\varphi^\vee\circ\sigma$ is cyclic, as none of the endomorphism rings of the curves involved in this isogeny are known.
	\item The fraudster could start with an isogeny $\sigma\colon E_A\to E_2$, then backtrack through the algorithm to find the curve $E_1$ which, after following the instructions given by $N$, yields $E_2$; if they could do this, then they could successfully replicate the signature. To avoid this scenario, we must ensure in the initial setup that the algorithm can't be backsolved. For instance, instead of following directions given by $M$, the user might first be asked to plug $E_1$ and $M$ into a hash function $h$, and follow the directions given by $h(E_1,M)$ instead. By ensuring that both $M$ and $E_1$ are required before one can even write down the directions, it becomes impossible to start with a desired endpoint $E_2$ and backtrack.
\end{itemize} 

\subsubsection{NIST submission and variants}\label{subsec: SQIsign variants}
In the past few years there has been a flurry of renewed interest around the SQIsign protocol, owing to a confluence of two factors. On one hand, while the initial NIST Post-Quantum Cryptography Standardization process did not recommend any isogeny-based systems \cite{NIST1}, a new call for digital signature schemes was made in 2023 \cite{NIST2} to which SQIsign was submitted. Thus SQIsign has now replaced SIKE as being the only isogeny-based algorithm that is actively being considered for widespread adoption. 

On the other hand, it was discovered that the methods used in \cite{CD} to break the SIDH protocol (\S\ref{subsec: SIDH}) could be used as a new way to generate the isogenies needed for a signature scheme, leading to implementations that are faster, more scaleable, use less memory, and have stronger security analyses. The first of these was {\bf SQISignHD} (HD for ``Higher Dimension'') \cite{DLRW}, but similar variants continue to be developed on a regular basis.

\subsection{Insecure subsets of $\scrG(p, \ell)$} 

Assume $p$ is very large (say $p\approx 2^{500}$). For all but a vanishingly small proportion of supersingular elliptic curves $E$ over $\FF_{p^2}$, the smallest non-integer element of $\End(E)$ will have extremely large norm. To wit, the number of supersingular curves with a non-integer endomorphism of degree at most $M$ grows like $O(M^\frac{3}{2})$; for example, of the roughly $\frac{p}{12}$ supersingular elliptic curves, only around $\sqrt{p}$ of them will have a non-integer endomorphism with degree less than $\sqrt[3]{p}$. One implication of this observation is that for the vast majority of supersingular curves over $\FF_{p^2}$, one can pick any relatively small $\ell$ (say $\ell\leq 100$) and the smallest non-backtracking cycle in $\scrG(p,\ell)$ containing $E$ will have length on the order of $\log p$ (as such a cycle produces a non-integer endomorphism of any elliptic curve appearing as a vertex on the cycle). This means that a breadth-first search algorithm starting at this vertex will not be able to distinguish $\scrG(p,\ell)$ from an $(\ell+1)$-regular tree in polynomial time. This near-homogeneity provides a certain amount of security: for a typical $E$, it is very difficult to use the local structure of any of the graphs $\scrG(p,\ell)$ in a neighborhood of $E$ to obtain any useful information.

In his thesis \cite{Love} (see also \cite{LB}), the second author studied the structure of the set of elliptic curves for which this near-homogeneity is not guaranteed. Fix $M< \sqrt{p}/2$, considered as very small compared to $p$. Let $SS^M(p)$ denote the supersingular $j$-invariants $j\in\fpbar$ with the property that if $E(j)$ is a corresponding elliptic curve, then $E(j)$ has an endomorphism $\alpha \not \in \ZZ$ such that $\Nm(\alpha) \leq M$. As said, the cardinality of $SS^M(p)$ has order of magnitude $O(M^{\frac{3}{2}})\ll p$.

Given $j\in SS^M(p)$, the $\alpha\in \End(E(j))\setminus\ZZ$ of small norm is almost unique. Namely, if there is another $\alpha^\prime \in \End(E(j))$ with norm bounded by $M$, then $\alpha^\prime \in \QQ(\alpha)$ (otherwise, one calculates that $\ZZ\oplus \ZZ\alpha \oplus \ZZ\alpha^\prime \oplus \ZZ\alpha\alpha^\prime$ is a subring of $\End(E)$ with discriminant smaller than $p$). By abuse of notation we shall refer to any such $\alpha$ as $\alpha_j$ and say it is a {\bf witness} that $j \in SS^M(p)$. Thus, for any $j\in SS^M(p)$ there is a well-defined quadratic field $\QQ(\alpha_j)$, where $\alpha_j$ is a witness that $j\in SS^M(p)$. The discriminant $D$ of such a quadratic field satisfies 
\[ -4M < D < 0.\]
We may thus partition the set $SS^M(p)$ into disjoint subsets called {\bf clusters}:
\[ SS^M(p) = \bigcup_D T_D, \qquad j \in T_D \Leftrightarrow \QQ(\alpha_j) \text{ has discriminant } D.\]
(To simplify the exposition, we allow some $T_D$ to be empty.) 
Setting $b:=\frac{2}{\sqrt{3}}\sqrt{M}$, we can define a graph $\scrG'(p, b)$ whose vertices are Galois orbits of supersingular $j$-invariants and whose edges correspond to Galois orbits of isogenies $f\colon E_1 \arr E_2$ of any prime degree $\ell\leq b$, as usual considered up to $\Aut(E_2)$. One may think of $\scrG'(p, b)$ as obtained by taking the quotient of $\scrG(p, \ell)$ by the Frobenius involution for all primes $\ell\leq b$, and then gluing together the corresponding vertices of all these $\scrG(p,\ell)$. Two elliptic curves are close to each other in $\scrG'(p,b)$ if it is possible to reach one from the other using isogenies of sufficiently small degree as well as the Frobenius isogeny; there are many more isogenies available than in $\scrG(p,\ell)$, which only uses $\ell$-isogenies for a single prime $\ell$.

The set $SS^M(p)$ forms a subgraph of $\scrG'(p,b)$. If $M$ is small enough,  $M < \sqrt[3]{3p/16}$, then one has the following \cite[Theorem 13.]{LB}: (1) The non-empty clusters are exactly the connected components of the full subgraph on $SS^M(p)$; (2) the shortest path in $\scrG'(p,b)$ between two vertices in distinct clusters has length greater than $\frac12\log_b(p)-\log_b(2M)$.
Otherwise said, for any two vertices in a single cluster there exists a path of short isogenies between them that stays within the cluster, but for two vertices in distinct clusters, there is no path between them that stays within $SS^M(p)$.  See Figure~\ref{fig:SM} for an illustration of the graph $\scrG^\prime(p, b)$ with the clusters in $SS^M(p)$ labeled.

\

\begin{figure}[h]
	\centering
	\begin{tikzpicture}
		\node (background) at (0,0) {\includegraphics[width=324pt]{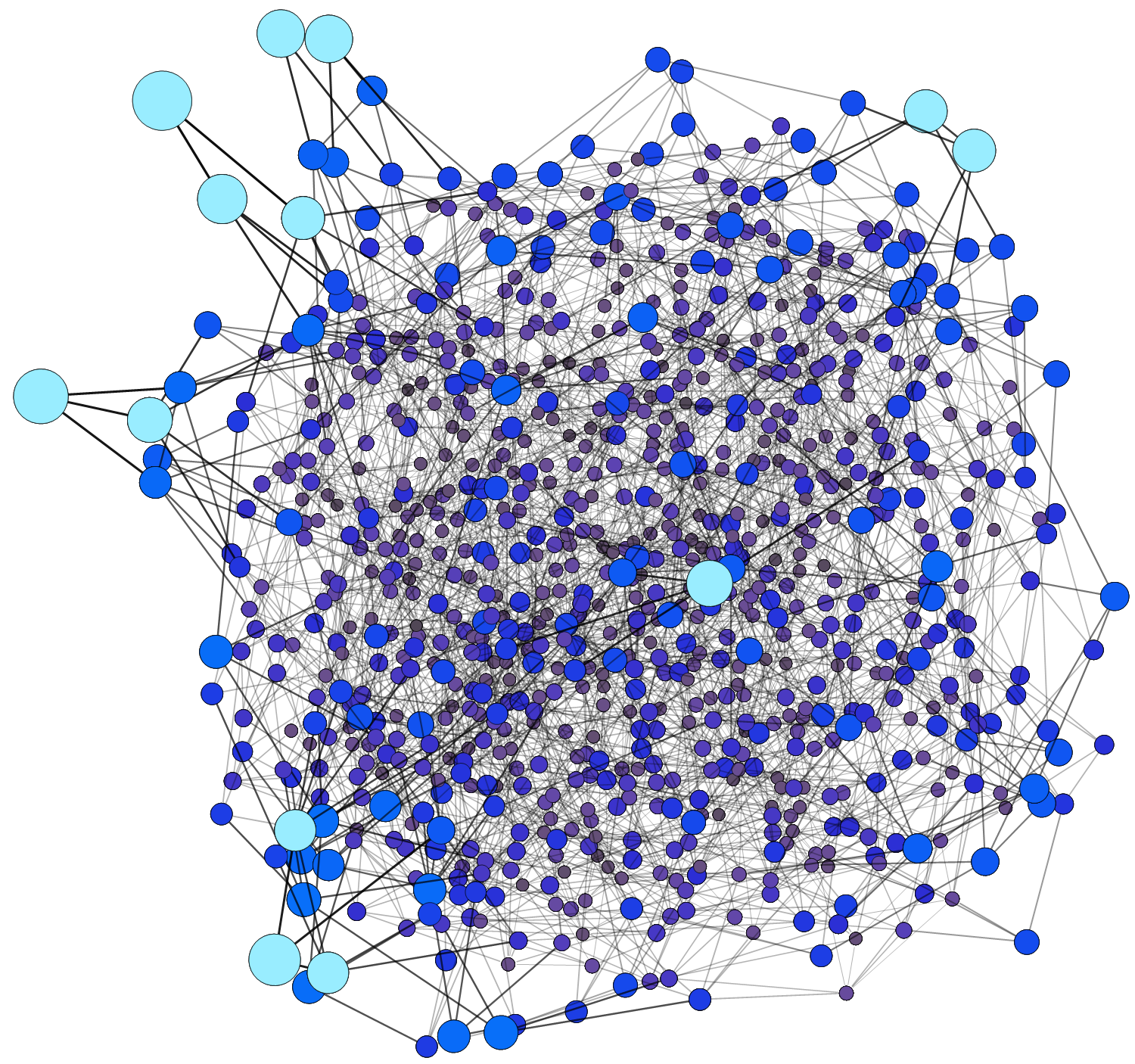}};
		\node at (-4.35,3.5) {\Large $T_{-4}$};
		\node at (-5.5,0.55) {\Large $T_{-7}$};
		\node at (-3.8,-3.6) {\Large $T_{-11}$};
		\node [fill=white, fill opacity=1, rounded corners=3pt,inner sep=1pt] at (2.15,-1.25) {\color{white}\Large $T_{-24}$};
		\node at (2.15,-1.25) {\Large $T_{-24}$};
		\node at (4.7,4.5) {\Large $T_{-35}$};
		\node at (-1.5,5) {\Large $T_{-20}$};
	\end{tikzpicture}
	\caption{\small{The graph $\scrG'(20011,\frac{2}{\sqrt{3}}(12))$. Vertices are supersingular curves in characteristic $p=20011$ with conjugate pairs $\{E,E^{(p)}\}$ identified; two curves $E,E'$ are connected by an edge if there is an isogeny $E\to E'$ or $E\to (E')^{(p)}$ of degree $2$ or $3$. Brighter, larger vertices correspond to curves with smaller non-integer endomorphisms; the light blue vertices are those in $SS^{12}(20011)$, and each cluster is labelled.  Data computed using Magma, plotted using Mathematica. A version of this image first appeared in \cite{LB}.}}
	\label{fig:SM}
\end{figure}

Let $\ell$ be a prime, $\ell \leq M < \sqrt[3]{3p/16}$. While the clusters of $SS^M(p)$ form connected subgraphs of $\scrG'(p,b)$, the path between two curves in a common cluster may require isogenies of different prime degrees as well as a Frobenius map. For this reason, the cluster may not form a connected subgraph of $\scrG(p,\ell)$ in general. Each cluster in $SS^M(p)$ is a juxtaposition of certain very small ``patches'' from within the graphs $\scrG(p, \ell)$, $\ell\leq b$.

\medskip
\id There are algorithms to compute the endomorphism rings of curves in $\text{SS}^M(p)$ with a runtime that is polynomial in $M$ and $\log p$, and so, despite the large distance between clusters, it is always possible to find paths in $\scrG(p,\ell)$ between any two curves in $\text{SS}^M(p)$. In light of the previous sections, this means that $\text{SS}^M(p)$ constitutes a portion of $\scrG(p,\ell)$ that is particularly insecure for cryptographic applications. For instance, \textit{the CGL hash function can be easily broken if the starting and ending curves both happen to lie in $\text{SS}^M(p)$. Likewise, the security of the SQIsign protocol depends on $E_1,E_2,E_A$ all avoiding $SS^M(p)$.}

At present, the only known method to generate a supersingular curve in large characteristic~$p$ in a reasonable amount of time is to start with an elliptic curve with CM and reduce it mod $p$, as in \S\ref{sec:CM}; this reduction will necessarily land in $\text{SS}^M(p)$ for a value of $M$ proportional to the discriminant of the CM order, and is therefore in the ``insecure part'' of the isogeny graph, as one must take $M$ to be quite small in order to be able to perform the computation. One may then take such a curve and follow random walks in the isogeny graphs $\scrG(p,\ell)$ for various values of~$\ell$ to approximate choosing a supersingular curve uniformly at random; however, the party that computes this path can use this information to compute the endomorphism ring of the output curve. This means that there is currently no known way to produce a supersingular curve in a feasible amount of time without at least one party knowing the endomorphism ring of this curve (see also \cite{Boo}). This is a serious issue for some applications, such as \cite{dFMPS}, which due to this limitation can only be implemented using ``trusted setup''. Other applications, including Diffie-Hellman protocols, avoid this issue by having multiple users each produce curves independently, ensuring that no individual user has complete information.

\section{Some new results concerning $\scrG(p, \ell)$} We report here on three new results concerning the subjects of this article. Since the details have or will appear in print elsewhere, we only provide explanation, context and comments regarding the proofs, referring the interested reader to the articles themselves. 
\subsection{Rigidity} \label{subsec: rigiditiy}From the point of view of cryptographic applications, any extra structure the graphs $\scrG(p, \ell)$ may have could be viewed as potential vulnerability of their cryptographic applications. Thus, one would be happy to know that they have very few automorphisms. 

Let $\Aut(\scrG(p, \ell))$ denote the {\bf automorphism group} of the graph. Such an automorphism is a pair $(f_V, f_E)$, where $f_V$ is a bijection on the set on the set of vertices and $f_E$ is a bijection on the set of oriented edges such that if $v$ is the origin (respectively, end) of an edge $e$ then $f_V(v)$ is the origin (respectively, end) of the edge $f_E(e)$. Among these automorphisms there are the trivial ones $A^{\text{triv}}(\scrG(p, \ell))$, which are the automorphisms $(f_V, f_E) = (\Id, f_E)$. This is a very small and easy to describe normal subgroup of $\Aut(\scrG(p, \ell))$ that simply permutes the loops at a vertex, or the multiple edges between two vertices. Since the number of such loops and multiple edges is bounded in terms of $\ell$ alone and is completely understood in terms of embeddings of quadratic orders into maximal orders of $B_{p, \infty}$, the structure of $A^{\text{triv}}(\scrG(p, \ell))$ is also grosso modo well-understood. We let the {\bf essential automorphism group} be
\[ \Aut^{\text{ess}}(\scrG(p, \ell)) = \Aut(\scrG(p, \ell))/A^{\text{triv}}(\scrG(p, \ell)).\]
In the following theorem, one assumes $p>71$ simply to guarantee that the automorphism $\Fr$ of $\scrG(p, \ell)$ is not an element of $A^{\text{triv}}(\scrG(p, \ell))$. Furthermore, the assumption made in the theorem is now known to hold for density $1$ of primes $\ell$ for a given $p$ \cite{KSW}. There are cases where this assumption is not satisfied and also $\Aut^{\text{ess}}(\scrG(p, \ell))$ is bigger. Sam Mayo proved the following theorem during his undergraduate summer program with the first author of this article \cite{Mayo}.
\begin{thm}\label{thm: Mayo} Let $p>71$ and assume that $T_\ell$ generates the Hecke algebra $\TT_\CC$ of $X_0(p)$ over the complex numbers. Then 
\[  \Aut^{\text{\rm ess}}(\scrG(p, \ell)) = \{ Id, \Fr\}.\]
\end{thm}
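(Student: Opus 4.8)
The plan is to encode an essential automorphism as a permutation matrix commuting with the $\ell$-th Brandt matrix $B(\ell)$, use the generation hypothesis to push that matrix into the Brandt algebra $\TT$, and then prove a rigidity statement forcing it to equal $Id$ or $B(p)=\Fr$. \textbf{First step: reduction to linear algebra.} An element of $\Aut^{\text{ess}}(\scrG(p,\ell))$ is determined by the permutation $\sigma$ it induces on the $h$ vertices (the trivial automorphisms have been quotiented out), and ``$\sigma$ preserves edge multiplicities'' translates into $P_\sigma^{-1}B(\ell)P_\sigma=B(\ell)$, i.e. the permutation matrix $P=P_\sigma$ lies in the centralizer of $B(\ell)$ in $M_h(\CC)$. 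I would also record that $\sigma$ preserves in-degrees, and that the only vertices of anomalous in-degree are the supersingular ones among $j=0,1728$ (at most two, and with distinct in-degrees), so $\sigma$ fixes those; hence $\sigma$ preserves the weights $w_i$ and $P$ commutes with $W=\diag(w_i)$.

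\textbf{Second step: entering the Brandt algebra.} Let $\TT\subseteq M_h(\CC)$ be the algebra generated by the Brandt matrices; by the identification recalled in \S\ref{sssection: vertex functions}, $\TT$ is the Hecke algebra of $X_0(p)$ with $B(n)\leftrightarrow T_n$ and $B(p)\leftrightarrow$ the Atkin--Lehner involution, and since weight-$2$ level-$p$ forms are multiplicity-free as a Hecke module (all newforms, plus one Eisenstein line), $\TT_\CC\cong\CC^{h}$ with $h=\dim M_2(\Gamma_0(p))$, a maximal commutative subalgebra of $M_h(\CC)$. The hypothesis that $T_\ell$ generates $\TT_\CC$ is exactly the statement that $B(\ell)$ has $h$ distinct eigenvalues, so its centralizer is $\CC[B(\ell)]=\TT_\CC$; thus $P\in\TT_\CC$, and in particular $\sigma$ commutes with \emph{every} Brandt matrix, including $B(p)=\Fr$. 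Every element of $\CC[B(\ell)]$ satisfies $X^{T}=WXW^{-1}$, and combined with $PW=WP$ this gives $P^{T}=P$, so $P$ is a symmetric involution. Since $P$ has integer entries, $P\in\TT_\CC\cap M_h(\QQ)=\TT_\QQ\cong\QQ\times\prod_{i=1}^{t}K_{f_i}$ (the Eisenstein component together with the Hecke eigenvalue fields of the $t$ Galois orbits of newforms); as $P^{2}=I$ in a product of fields, each component of $P$ is $\pm1$, and $P\mathbf 1=\mathbf 1$ forces the Eisenstein component to be $+1$. Hence the component-sign map embeds $\Aut^{\text{ess}}(\scrG(p,\ell))$ into $\{\pm1\}^{t}$; it contains $Id$ and, once $p>71$, the nontrivial element $B(p)$ (nontrivial precisely because then not all supersingular $j$-invariants lie in $\FF_p$). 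What remains is to rule out any further element.

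\textbf{Third step: rigidity.} Applying commutation with all $B(n)$ on the diagonal gives $(\End(E_i),\Nm)\cong(\End(E_{\sigma(i)}),\Nm)$, hence $\End(E_i)\cong\End(E_{\sigma(i)})$ as quaternion orders; since the fibre of $E\mapsto\End(E)$ over a maximal order is $\{E,E^{(p)}\}$, this shows $\sigma(i)\in\{i,i'\}$ at every vertex, where $i'$ is the Frobenius conjugate, so $\sigma$ descends to the identity on $\scrG(p,\ell)/\langle\Fr\rangle$. Replacing $\sigma$ by $\Fr\circ\sigma$ if needed (which does not affect whether $\sigma\in\{Id,\Fr\}$), I would assume $\sigma$ fixes a base vertex $E_1$; then $\sigma(1)=1$ and commutation with all $B(n)$ force $\Theta_{E_1,E_j}=\Theta_{E_1,E_{\sigma(j)}}$ for every $j$, i.e. under the Deuring correspondence the right $\End(E_1)$-ideals $I_j$ and $I_{\sigma(j)}$ carry the same scaled-norm theta series. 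The goal is then to propagate $\sigma(j)=j$ outward from $E_1$ along $\ell$-isogeny paths, using at each already-fixed vertex that its labelled set of outgoing $\ell$-isogenies, read through the Deuring dictionary, pins down each neighbour and not merely its Frobenius orbit; connectedness of $\scrG(p,\ell)$ would then give $\sigma=Id$.

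\textbf{The main obstacle.} Steps one and two are essentially formal once the Eichler--Gross dictionary and multiplicity-one for $S_2(\Gamma_0(p))$ are in hand; the real content is the rigidity step. The difficulty is that the ``endomorphism norm form'' argument only localizes $\sigma(i)$ to $\{i,i'\}$, which is automatically consistent with an automorphism that acts as Frobenius on some supersingular curves and as the identity on others, so a genuinely global input is needed to forbid such mixing. I expect the crux to be precisely the claim that one fixed vertex together with its outgoing $\ell$-isogeny data rigidifies the neighbours \emph{themselves} (not just their Frobenius orbits) — this is presumably where connectedness and the full strength of ``$T_\ell$ generates $\TT_\CC$'' (rather than merely ``$\scrG(p,\ell)$ is connected'') enter. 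As an alternative route, one can try to argue entirely inside $\TT_\QQ$ that the only symmetric involutory permutation matrices there correspond to the sum of all primitive idempotents ($P=I$) or to the Atkin--Lehner subset ($P=B(p)$), exploiting that the delta functions $\delta_i$ form the natural integral basis of the Brandt module; but establishing the admissible supports of such idempotents is, I believe, the same essential difficulty in a different guise.
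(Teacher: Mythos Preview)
Your first two steps are essentially the paper's own opening moves, and your argument that $P\in\TT_\CC$ via the centralizer of a matrix with simple spectrum is in fact cleaner than what the paper sketches (which cites a result of Emerton to land $P$ in $\TT$ integrally). The reduction to an elementary $2$-group inside $\TT_\QQ\cong\QQ\times\prod_i K_{f_i}$ also matches.

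The genuine gap is your third step, and you have correctly diagnosed it as the crux. Your propagation idea cannot work as stated: if $E_1$ is fixed (in particular if $E_1$ is defined over $\FF_p$), then for any neighbor $E_2\not\cong E_2^{(p)}$ the isometry $\Hom(E_1,E_2)\cong\Hom(E_1^{(p)},E_2^{(p)})=\Hom(E_1,E_2^{(p)})$ shows that the $\ell$-isogeny data out of $E_1$ \emph{cannot} distinguish $E_2$ from $E_2^{(p)}$. So ``one fixed vertex rigidifies its neighbors individually'' is false in exactly the cases you need, and no amount of connectedness repairs this. Your alternative route (characterizing which sign vectors in $\{\pm1\}^t$ are permutation matrices in the $\delta_i$ basis) is the right question but is not resolved by anything you have written.

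The paper's resolution is different in kind: rather than arguing combinatorially on the graph, it shows that any such $P\in\TT$ with $P^2=Id$ that permutes the supersingular basis must arise from an automorphism of the modular curve $X_0(p)$ itself. Once that bridge is built, the classical classification of $\Aut(X_0(p))$ for prime level (Ogg, Kenku--Momose) gives $\Aut(X_0(p))=\{Id,w_p\}$, and $w_p$ corresponds to $B(p)=\Fr$. This passage from ``involution in $\TT$ permuting the Brandt basis'' to ``automorphism of $X_0(p)$'' is the missing idea in your proposal; it is geometric input that your purely linear-algebraic and theta-function arguments do not supply.
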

For the proof and references, see loc. cit. The first move is to note that an automorphism $\varphi$ of the graph commutes with the adjacency matrix of $\scrG(p, \ell)$, thus is a linear operator on the functions $\text{\rm Fun}(p, \ell)$ that commutes with $T_\ell$, hence with $\TT$. However, by a result of Emerton, that implies that $\varphi \in \TT$ (in fact, over $\ZZ$). As $\TT_\QQ$ is product of totally real fields, that already gives that $\varphi$ belongs to an elementary $2$ group of size at most $2^g$, where $g$ is the genus of $X_0(p)$. Further study puts further conditions on $\varphi$ and in particular shows that it arises from an automorphism of $X_0(p)$ itself. Those have been well-studied, and one concludes that the only non-trivial option is for $\varphi$ to be induced by the Atkin-Lehner involution.

\subsection{Generation by $\ell$-isogenies}\label{subsec: Generation by isogenies} In analyzing the security of hash functions, or other cryptographic protocols, associated with $\scrG(p, \ell)$, sometimes one requires the assumption that for a supersingular curve $E$, the ring $\End(E)$ is generated as a $\ZZ$-module (equivalently, as a ring) by homomorphisms $f$ of degree $\{ \ell^k: k \geq 0\}$; for example, in \cite[\S 3.3]{EHLMP}. It turns out that this is but a very special case of a general result concerning the generation of lattices of rank at least~$4$ by elements of norms in a specified set. In \S\ref{section6}, we generalize this result to lattices over totally real fields. 

Let $Q:\ZZ^n\to \QQ$ be a quadratic form; that is, the function $(x, y) := \frac12(Q(x+y) - Q(x) - Q(y))$ is a bilinear form. We say that $Q$ is {\bf integral} if $Q(x)\in\ZZ$ for all $x\in\ZZ^n$ (which implies that $(x,y)\in\frac 12\ZZ$ for $x,y\in\ZZ^n$), that $Q$ is {\bf nondegenerate} if for all $x\in\ZZ^n$ there exists $y\in\ZZ^n$ with $(x,y)\neq 0$, and that $Q$ is {\bf primitive} if the ideal of $\ZZ$ generated by the values $Q(x), x \in \ZZ^n$ is $\ZZ$ itself. In \cite[Corollary 2.3]{GL}, we proved the following theorem.

\begin{thm}\label{thm: special generators for a lattice} Let $Q$ be an integral, non-degenerate, primitive quadratic form in $n\geq 4$ variables. Let $S$ be an infinite set of positive integers. Assume that for all $s\in S$ and all primes~$r$, there exists a basis of $\ZZ_r^n$ comprised of elements $x$ with norm $Q(x)=s$. Then $\ZZ^n$ is generated by elements $x$ with norm $Q(x) \in S$.
\end{thm}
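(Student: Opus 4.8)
The plan is to prove the statement one prime at a time. Write $L\subseteq\ZZ^n$ for the subgroup generated by all $x\in\ZZ^n$ with $Q(x)\in S$; the goal is $L=\ZZ^n$. Observe that $L=\ZZ^n$ if and only if $L+p\ZZ^n=\ZZ^n$ for every prime $p$: the "only if" is trivial, and if $L+p\ZZ^n=\ZZ^n$ for all $p$ then $p\cdot(\ZZ^n/L)=\ZZ^n/L$ for all $p$, so the finitely generated abelian group $\ZZ^n/L$ is divisible, hence zero. The condition $L+p\ZZ^n=\ZZ^n$ says exactly that the reductions modulo $p$ of the vectors $x\in\ZZ^n$ with $Q(x)\in S$ span $\ZZ^n/p\ZZ^n\cong\FF_p^n$. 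So it suffices to fix a prime $p$ and exhibit $w_1,\dots,w_n\in\ZZ^n$, each with $Q(w_i)\in S$, whose reductions modulo $p$ form an $\FF_p$-basis.

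To build these, first use the hypothesis at the prime $r=p$. Since $S$ is infinite I may pick $s\in S$ as large as needed (a threshold depending only on $p$ and $Q$, uniform over the finitely many cosets of $p\ZZ^n$ in $\ZZ^n$, will be imposed below). The hypothesis gives a $\ZZ_p$-basis $v_1,\dots,v_n$ of $\ZZ_p^n$ with $Q(v_i)=s$ for all $i$, so the reductions $\bar v_1,\dots,\bar v_n$ form an $\FF_p$-basis of $\FF_p^n$. Choose integer vectors $u_i\in\ZZ^n$ with $u_i\equiv v_i\pmod p$. I now want to correct each $u_i$ within its residue class modulo $p$ to a vector of norm exactly $s$: concretely, I want $w_i$ in the coset $u_i+p\ZZ^n$ with $Q(w_i)=s$, which then has $\bar w_i=\bar v_i$, so the $\bar w_i$ are independent and we are done for this $p$.

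The crux is producing such a $w_i$ by a local--global principle for the representation of a sufficiently large integer by $Q$ subject to a congruence condition modulo $p$ (equivalently, by a coset of a nondegenerate lattice of rank $n\ge 4$; this rests on the Siegel--Weil formula, an asymptotic formula for representation numbers in the spirit of Hanke, and Deligne's bound for Fourier coefficients of cusp forms). It reduces the existence of $w_i$ to: (i) $s$ is represented by the coset $u_i+p\ZZ^n$ over every completion of $\QQ$; and (ii) $s$ is large enough. For (i): over $\ZZ_p$ the coset is $v_i+p\ZZ_p^n$, which contains $v_i$ with $Q(v_i)=s$; over $\ZZ_r$ with $r\ne p$ the coset is all of $\ZZ_r^n$ and $Q$ represents $s\in S$ there by hypothesis (indeed there is a whole $\ZZ_r$-basis of norm $s$); over $\RR$, $Q$ represents the positive number $s$ because $Q$ is not negative definite. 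Condition (ii) holds by our choice of $s$. Using the \emph{same} value $s$ at every place is precisely what makes all the local conditions automatic and, in particular, avoids any need to assume $\gcd(s,\disc Q)=1$.

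The step I expect to be the real obstacle is the local--global input itself: that $Q$, with a prescribed congruence condition, represents every sufficiently large integer it represents everywhere locally, for $n\ge 4$. Its proof requires controlling the local representation densities at all primes --- the genuinely delicate cases being $p=2$ and the primes dividing $\disc(Q)$ --- and bounding the cuspidal contribution against the Eisenstein main term, which is exactly where $n\ge 4$, nondegeneracy (and primitivity) of $Q$, and Deligne's bound are needed; this is also the ingredient that is hardest to port to lattices over totally real fields, carried out by different means in \S\ref{section6}. Everything else is soft: the reduction to the condition mod $p$, the cosetwise uniform size bookkeeping, and the archimedean point, which needs only that $Q$ is not negative definite --- automatic in the intended applications, where $Q$ is the positive-definite degree form on $\Hom(E_1,E_2)$.
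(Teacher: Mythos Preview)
Your proof is correct and follows essentially the same approach as the paper: reduce to surjectivity of the norm-$S$ sublattice modulo each prime $p$, lift the hypothesized local basis of norm-$s$ vectors to cosets $u_i+p\ZZ^n$, and invoke a local--global principle for representation by lattice cosets (the paper phrases this input as Sardari's strong approximation theorem, which packages the same Siegel--Weil/Hanke/Deligne machinery you name). Your argument is in fact structurally identical to the paper's own proof of the totally real generalization, Theorem~\ref{thm: totally real local-global generators}, including the reduction to spanning $\FF_p^n$ and your correct flagging of the archimedean caveat that $Q$ must not be negative definite.
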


\id One can apply this result to other lattices $\Lambda\neq \ZZ^n$ by picking a basis, defining an isomorphism $\Lambda\simeq \ZZ^n$. The proof given in \cite{GL} uses a strong approximation theorem of N. Sardari that allows finding integral points of sufficiently large norm satisfying local congruence conditions.  
The conditions required to apply Sardari's theorem hold by the assumptions in our theorem, but in the application to maximal orders of $B_{p, \infty}$ they reduce to a very easy verification about existence of bases of maximal orders $B_{p, \infty}\otimes \QQ_r$ (for \textit{all} primes $r$, including $r=p$) with given norms $s\in S$, that are easy to check given we have an explicit and simple description of the local maximal orders. We thus deduce:

\begin{thm} \label{thm: generation of orders} Let $\ell \neq p$ be a prime. Every maximal order $\calO$ of $B_{p, \infty}$ is generated by elements of norm in the set $\{\ell^k: k = 0, 1, 2, 3, \dots\}$.
\end{thm}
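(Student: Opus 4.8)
The plan is to obtain this as an immediate consequence of Theorem~\ref{thm: special generators for a lattice}, applied to the rank-$4$ lattice $(\calO,\Nm)$ and to the set $S=\{\ell^k: k=0,1,2,\dots\}$. After choosing a $\ZZ$-basis of $\calO$ we may regard $\Nm$ as a quadratic form $Q\colon\ZZ^4\to\QQ$; since the hypotheses and conclusion of Theorem~\ref{thm: special generators for a lattice} do not depend on the basis, it suffices to verify them for $Q$. Now $Q$ is integral because every element of an order has integral reduced norm; it is non-degenerate because its bilinear form $\tfrac12\Tr(x\bar y)$ has discriminant $p^2\neq 0$ (\S\ref{subsec:Hom quadratic module}); and it is primitive because $\Nm(1)=1$. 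The set $S$ is an infinite set of positive integers. So everything is automatic except the local representability hypothesis.

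The heart of the matter is therefore: for every prime $r$ and every $s=\ell^k\in S$, exhibit a $\ZZ_r$-basis of $\calO\otimes_\ZZ\ZZ_r$ consisting of elements of reduced norm exactly $s$. For $r\neq p$ the algebra $B_{p,\infty}$ is split at $r$, so $(\calO\otimes\ZZ_r,\Nm)\cong(M_2(\ZZ_r),\det)$, and here one choice works simultaneously for all $r$ and all $s$: the four matrices
\[
\begin{pmatrix}1&0\\0&s\end{pmatrix},\qquad
\begin{pmatrix}1&1\\0&s\end{pmatrix},\qquad
\begin{pmatrix}1&0\\1&s\end{pmatrix},\qquad
\begin{pmatrix}0&1\\-s&1\end{pmatrix}
\]
all have determinant $s$, and the change-of-basis matrix expressing them in terms of the standard matrix units $E_{11},E_{12},E_{21},E_{22}$ is unimodular over $\ZZ$ for every $s$ (a one-line row reduction), hence invertible over $\ZZ_r$ for every $r$. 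For $r=p$ the completion $\calO\otimes\ZZ_p$ is the unique maximal order of the ramified quaternion algebra over $\QQ_p$, which by \eqref{eq:ramified_maxorder} is $\calO_{L_2}\oplus\calO_{L_2}$ ($L_2$ the unramified quadratic extension of $\QQ_p$) equipped with $q(a,b)=\Nm_{L_2/\QQ_p}(a)-p\,\Nm_{L_2/\QQ_p}(b)$. Since $\ell\neq p$, $s$ is a unit in $\ZZ_p$; as the norm map of an unramified extension is surjective on units and the conic $\Nm_{L_2/\QQ_p}(a)=u$ is smooth (it reduces to a smooth conic over $\FF_p$), one can pick $a_1,a_2\in\calO_{L_2}$ with $\Nm_{L_2/\QQ_p}(a_i)=s$ forming a $\ZZ_p$-basis of $\calO_{L_2}$, and then, for any $\ZZ_p$-basis $b_3,b_4$ of $\calO_{L_2}$, solve $\Nm_{L_2/\QQ_p}(a_i)=s+p\,\Nm_{L_2/\QQ_p}(b_i)$ for $i=3,4$ (the right-hand sides are again units). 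Then $(a_1,0),(a_2,0),(a_3,b_3),(a_4,b_4)$ is the desired $\ZZ_p$-basis of norm-$s$ vectors; the cases $p=2,3$ reduce to a finite check of the same kind.

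With all the hypotheses of Theorem~\ref{thm: special generators for a lattice} verified, it yields that $\calO\cong\ZZ^4$ is generated as a $\ZZ$-module---hence, being a ring containing $1$, as a ring---by elements of norm in $\{\ell^k:k\geq 0\}$, which is the claim. (Through the Deuring correspondence, writing $\calO\cong\End(E)$ so that $\Nm$ becomes $\deg$, this says precisely that $\End(E)$ is generated by isogenies of $\ell$-power degree, which is the form in which the result is used in \S\ref{subsec: Generation by isogenies}.) The only genuine work beyond invoking Theorem~\ref{thm: special generators for a lattice} is the local check, and within it the ramified place $r=p$: away from $p$ the uniform matrix basis above does the job, but at $p$ one must use the explicit shape of the ramified local order, and the small primes must be inspected by hand.
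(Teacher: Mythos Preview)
Your proof is correct and takes essentially the same approach as the paper: deduce the theorem directly from Theorem~\ref{thm: special generators for a lattice} by verifying the local basis condition at each prime $r$, using the explicit description of the local maximal orders (split for $r\neq p$, the ramified order~\eqref{eq:ramified_maxorder} for $r=p$). The paper only sketches this, saying the local checks ``reduce to a very easy verification''; you have supplied concrete bases, which is helpful. One small remark: your special-casing of $p=2,3$ is not really needed---your argument at $r=p$ works uniformly for odd $p$, and for $p=2$ the same Hensel-lifting applies since the partial derivatives of $\Nm_{L_2/\QQ_2}(x+y\omega)=x^2-xy+y^2$ at the $\FF_2$-solutions $(1,0)$ and $(0,1)$ already contain a $2$-adic unit.
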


In \S\ref{section6}, we would prefer to formulate our approach slightly differently. In truth, in both approaches the deus ex machina is the same, but a change of perspective will be enlightening and useful. 

\subsubsection{An application to even unimodular lattices}
While somewhat tangential to our main goal of studying supersingular isogenies, the authors would like to use this opportunity to share another intriguing and charming corollary of Theorem~\ref{thm: special generators for a lattice}.

\begin{cor}\label{cor: even unimodular}
	Let $S$ be any infinite set of positive even integers, and $\Lambda$ an even unimodular lattice. Then $\Lambda$ is generated by elements with norm in $S$.
\end{cor}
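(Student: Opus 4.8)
The plan is to derive the corollary from Theorem~\ref{thm: special generators for a lattice}, applied not to the norm form $Q_0(x):=(x,x)$ of $\Lambda$ but to its halving $Q:=\tfrac12 Q_0$. Fixing a $\ZZ$-basis identifies $\Lambda$ with $\ZZ^n$; since an even lattice has even diagonal Gram entries while a unimodular one has odd determinant, the rank $n$ is automatically even, and I would take $n\geq4$ — which is automatic when $\Lambda$ is positive definite (then $8\mid n$) and is in fact necessary, since for the hyperbolic plane $\Lambda=U$ and $S=\{4k+2:k\geq0\}$, every vector whose norm lies in $S$ has both coordinates odd, so such vectors span only an index-$2$ sublattice. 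Because $\Lambda$ is even, $Q$ is $\ZZ$-valued; because $S$ consists of even integers, $S':=\{s/2:s\in S\}$ is an infinite set of positive integers, and a vector has norm in $S$ exactly when $Q(x)\in S'$. Hence it suffices to verify that $(Q,S')$ meets the hypotheses of Theorem~\ref{thm: special generators for a lattice}.

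Integrality and non-degeneracy of $Q$ are immediate from evenness and unimodularity of $\Lambda$. For primitivity, let $G=\big((e_i,e_j)\big)$ be the Gram matrix, so $\det G=\pm1$. If a prime $p$ divided $Q(x)$ for every $x\in\ZZ^n$, then from $p\mid Q(e_i)=\tfrac12 G_{ii}$ and $p\mid Q(e_i+e_j)=\tfrac12 G_{ii}+G_{ij}+\tfrac12 G_{jj}$ we would get $p\mid G_{ij}$ for all $i,j$, hence $p^n\mid\det G$, a contradiction; so the values of $Q$ generate the unit ideal.

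The main point is the local condition: for each prime $r$ and each $s\in S$ one needs a $\ZZ_r$-basis of $L_r:=\Lambda\otimes\ZZ_r$ consisting of vectors $x$ with $Q_0(x)=s$. Since over the discrete valuation ring $\ZZ_r$ any spanning set of $n$ elements of a free rank-$n$ module is a basis, it is enough to produce \emph{some} family of norm-$s$ vectors that spans $L_r$. Now $L_r$ is unimodular over $\ZZ_r$ and even (all $(y,y)\in2\ZZ_r$), and because $n\geq4$ it splits off an orthogonal hyperbolic plane:
\[ L_r=\ZZ_r e\oplus\ZZ_r f\oplus N_r,\qquad (e,e)=(f,f)=0,\quad (e,f)=1, \]
with $N_r=\{e,f\}^\perp$ unimodular of rank $n-2\geq2$. (For odd $r$ this holds because unit-valued quadratic forms of rank $\geq3$ over $\QQ_r$ are isotropic, all Hilbert symbols of units being trivial, so $L_r$ contains a primitive isotropic vector; for $r=2$ it is the classical structure theory of even unimodular $\ZZ_2$-lattices. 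This is the step that, while standard, demands the most care — especially at $r=2$.) Granting the splitting, for $x\in N_r$ set
\[ v_x:=\frac{s-(x,x)}{2}\,e+f+x\in L_r; \]
the coefficient of $e$ lies in $\ZZ_r$ since $s$ and $(x,x)$ are even, and one checks $(v_x,v_x)=s$. With $v_0=\tfrac s2 e+f$ one has $v_x-v_0=-\tfrac{(x,x)}2e+x$, and
\[ (v_x-v_0)+(v_{x'}-v_0)-(v_{x+x'}-v_0)=\frac{(x+x',x+x')-(x,x)-(x',x')}{2}\,e=(x,x')\,e. \]
Therefore the $\ZZ_r$-span of $\{v_x:x\in N_r\}$ contains $(x,x')\,e$ for all $x,x'\in N_r$; as $N_r$ is unimodular, $(x,x')$ is a unit for some pair, so this span contains $\ZZ_r e$, then every $x=(v_x-v_0)+\tfrac{(x,x)}2e$, and finally $f=v_0-\tfrac s2 e$. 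Hence the norm-$s$ vectors $\{v_x\}$ span $L_r$, as required.

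With all three hypotheses verified for $(Q,S')$, Theorem~\ref{thm: special generators for a lattice} gives that $\ZZ^n\cong\Lambda$ is generated by vectors $x$ with $Q(x)\in S'$, i.e.\ with $(x,x)\in S$. The only genuinely delicate point in this plan is the hyperbolic-plane splitting of $L_2$; once one has the idea of passing to the halved form $Q$ and exploiting a single hyperbolic summand, the rest is formal.
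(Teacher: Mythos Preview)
Your argument is correct and takes a genuinely different route from the paper's. Both proofs reduce to verifying the local hypothesis of Theorem~\ref{thm: special generators for a lattice}, but they do so in distinct ways.

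The paper uses the genus classification: every even unimodular lattice is locally isomorphic to a power of $E_8$, so it suffices to exhibit, for each prime $p$ and each $s\in S$, an explicit $\ZZ_p$-basis of $(E_8)_p$ consisting of norm-$s$ vectors. The paper writes down concrete $8\times 8$ matrices at $p=2$ (using the form $2x_1x_2+\cdots+2x_7x_8$) and at odd $p$ (using the form $x_1^2+\cdots+x_8^2$), and checks their determinants are units. This is completely explicit but relies on knowing that $E_8$ represents the genus.

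Your approach instead verifies the local condition directly for an arbitrary even unimodular $\Lambda$ of rank $\geq 4$, by splitting off a hyperbolic plane $\ZZ_r e\oplus\ZZ_r f$ and using the family $v_x=\tfrac{s-(x,x)}{2}e+f+x$ parametrized by the orthogonal complement. This is more conceptual: it avoids any reference to $E_8$ and works uniformly, though it trades explicit matrices for an appeal to the local structure theory (hyperbolic splitting, particularly at $r=2$). Your observation that one must pass to the halved form $Q=\tfrac12 Q_0$ to obtain primitivity is a point the paper's exposition glosses over.

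One small remark: to extract an honest \emph{basis} from your infinite family $\{v_x\}$ you implicitly use that $\ZZ_r$ is local (reduce modulo $r$, pick $n$ vectors whose images form an $\FF_r$-basis, lift by Nakayama). This is routine but worth stating, since over $\ZZ$ a spanning set need not contain a basis.
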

\begin{proof}
	All even unimodular lattices are in the same genus as $E_8^n$ for some $n$; see \cite{Omeara}, especially \S 106 A. So by Theorem~\ref{thm: special generators for a lattice}, it suffices to check that for any $s\in S$ and every prime $p$, there exists a basis of the localization of $E_8$ at $p$ consisting of elements of norm $s$.
	
	Locally at $2$, the quadratic form associated to $E_8$ is equivalent to $2x_1x_2+2x_3x_4+2x_5x_6+2x_7x_8$ (loc. cit.). Writing $s=2t$, the matrix
	\[\left(\begin{smallmatrix}
		1&0&0&0&0&0&1&t\\
		0&1&0&0&0&0&1&t\\
		0&0&1&0&0&0&1&t\\
		0&0&0&1&0&0&1&t\\
		0&0&0&0&1&0&1&t\\
		0&0&0&0&0&1&1&t\\
		0&0&0&0&0&0&1&t\\
		1&t&0&0&0&0&0&1
	\end{smallmatrix}\right)\]
	has determinant $1$, and so the row vectors form a basis of vectors of norm $s$ for this lattice.
	
	Locally at $p\geq 3$, the quadratic form is equivalent to $x_1^2+\cdots +x_8^2$ (loc. cit.). There exist $p$-adic integers $x,y,z$ solving $x^2+y^2+z^2=s-1$ and such that $z$ is a $p$-adic unit: explicitly, we can take $x,y\in\ZZ$ satisfying $x^2+y^2\equiv s-2\pmod p$ (possible by the pigeonhole principle because the sets $\{s-2-y^2:y\in\FF_p\}$ and $\{x^2:x\in\FF_p\}$ both have $\frac{p+1}{2}$ elements), so that $z_0=1$ solves $x^2+y^2+z_0^2\equiv s-1\pmod p$, and we obtain $z\in\ZZ_p^\times$ with $x^2+y^2+z^2=s-1$ by Hensel lifting. The matrix
	\[\left(\begin{smallmatrix}
		1&0&0&0&0&x&y&z\\
		-1&0&0&0&0&x&y&z\\
		0&1&0&0&0&x&y&z\\
		0&0&1&0&0&x&y&z\\
		0&0&0&1&0&x&y&z\\
		0&0&0&0&1&x&y&z\\
		x&y&z&0&0&1&0&0\\
		x&y&z&0&0&0&1&0
	\end{smallmatrix}\right)\]
	has determinant $2z\in\ZZ_p^\times$ and so the row vectors form a basis of vectors of norm $s$ for this lattice.	
\end{proof}

\subsection{Identifying an elliptic curve from its endomorphism ring} Let $E$ be a supersingular elliptic curve. One can ask whether $E$ is determined up to isomorphism by $\End(E)$. The answer to that goes back more than a century and was already mentioned above: two supersingular elliptic curves with $j$-invariants $j$ and $j^\prime$ have isomorphic endomorphism rings if and only if $j^\prime \in \{ j, j^p\}$. Thus, one may ask further if there is a way to generate the elliptic curve given its maximal order. If one knows a single base supersingular elliptic curve $E_0$ as a Weierstrass equation and also a description of its maximal order, then the problem can be solved by finding a path in the isogeny graph $\scrG(p, \ell)$ by working first completely at the level of quaternion algebras. This path is then transformed to a path in $\scrG(p, \ell)$, labelled with elliptic curves now, that starts with $E_0$ and ends in $E$. V\'elu's formula can be used to derive a Weierstrass equation for $E$. There is thus a solution to this problem that is polynomial time in $\log(p)$; see \cite{EHLMP, KLPT}.

For many primes we can find such a base elliptic curve $E_0$ quite easily. We have already given an example in \S\ref{subsec: SQIsign} when $p\equiv 3\pmod{4}$. 

\medskip

\id
We can consider another invariant for a supersingular elliptic curve $E$ which is its theta function as in \S\ref{sssection: thetas}
\[ \Theta_{E}(q) = \sum_{n=0}^\infty \sharp \{f\in \End_{\fpbar}(E): \deg(f) = n\} \cdot q^n = \sum_{n=0}^\infty r_{E}(n)\cdot q^n. \]
It is natural to ask whether $\Theta_E$ determines $E$ up to Frobenius transform. While the theta function is not a fine enough invariant to be able to distinguish between rank $4$ lattices in general \cite{ConwaySloane}, the authors showed that it is strong enough to distinguish between endomorphism rings of elliptic curves \cite{GL}.

\begin{thm}\label{thm: theta determines E} Let $E, E^\prime$ be supersingular elliptic curves with $j$ invariants $j, j^\prime$, respectively. Then 
\[ \Theta_E = \Theta_{E^\prime}\;\; \Leftrightarrow \;\; j^\prime \in \{j, j^p\}.\]
\end{thm}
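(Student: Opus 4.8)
The plan is to deduce the theorem from the facts already assembled about the Deuring correspondence and the Siegel--Weil / Eichler basis machinery. The backward implication is immediate: if $j' \in \{j, j^p\}$ then $E'$ is isomorphic to $E$ or to $E^{(p)}$, and in either case $\Hom(E,E) \cong \Hom(E',E')$ as quadratic modules (using that $f \mapsto f^{(p)}$ is an isometry $\End(E) \to \End(E^{(p)})$), so $\Theta_E = \Theta_{E'}$. The content is the forward direction. Here $\Theta_E(q) = \sum_n r_E(n) q^n$ where $r_E(n) = 2 w_E \, B(n)_{EE}$ with $2w_E = \sharp\Aut(E)$ (taking $E = E_i$ in the enumeration, $B(n)_{ii}$ as in \S\ref{subsubsec: brandt matrices}); so knowing $\Theta_E$ is equivalent to knowing the diagonal entry $B(n)_{ii}$ of every Brandt matrix, i.e.\ the ``diagonal'' of the Hecke action.

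The key step is to promote $\Theta_E$ to a statement about the Hecke module $M_2(\Gamma_0(p))$. By \S\ref{sssection: vertex functions}, under the pairing $\langle \delta_i,\delta_j\rangle = w_i \delta_{ij}$ we have the Hecke-equivariant map $\delta_i \otimes \delta_i^\vee \mapsto \frac{1}{2w_i}\Theta_{E_i,E_i} = \frac{1}{2w_i}\Theta_{E_i}$; equivalently, $\Theta_E$ encodes the ``diagonal matrix coefficient'' $u \mapsto \langle T_m u, \delta_i^\vee\rangle$ evaluated at $u = \delta_i$. I would argue that from this datum one recovers the $\TT$-submodule of $\mathrm{Fun}(p,\ell)$ generated by $\delta_i$ together with enough of its internal structure to pin down $\delta_i$ up to the symmetries of the situation. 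Concretely: decompose $\mathrm{Fun}(p,\ell) \otimes \CC = \bigoplus_f V_f$ into Hecke eigenspaces (one for the Eisenstein line, one for each Galois-conjugacy class of newforms $f \in S_2(\Gamma_0(p))$, by multiplicity one). Writing $\delta_i = \sum_f c_{i,f}$ with $c_{i,f} \in V_f$, the generating series $\sum_m \langle T_m \delta_i, \delta_i^\vee\rangle q^m$ is, up to the explicit Eisenstein contribution $\frac{1}{2w_i}\mathscr{E}_2$, equal to $\sum_f \frac{\|c_{i,f}\|^2}{w_i} f$ (using that $T_m$ acts on $V_f$ by the real eigenvalue $a_m(f)$ and that the eigenspaces are pairwise orthogonal for the pairing, after extending it Hermitianly). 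Hence $\Theta_E$ determines, for every newform $f$, the nonnegative real number $\|c_{i,f}\|^2$; in particular it determines exactly which $f$ appear in $\delta_i$ (those with $\|c_{i,f}\|^2 \neq 0$) and with what ``mass''.

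From here the endgame is a uniqueness argument for the vector $\delta_i$ given its squared components in each eigenline. Within a one-dimensional rational eigenspace $V_f$ (over the Hecke field) the component $c_{i,f}$ is determined up to sign by $\|c_{i,f}\|^2$; across eigenspaces these signs are not independent because $\delta_i$ must be a $\{0,1\}$-vector in the standard basis with a single $1$. I would exploit this rigidity: any two $j$-invariants $i, i'$ with $\Theta_{E_i} = \Theta_{E_{i'}}$ give vectors $\delta_i, \delta_{i'}$ with equal masses $\|c_{i,f}\|^2 = \|c_{i',f}\|^2$ in every eigenline, hence $\langle T_m \delta_i, \delta_i^\vee\rangle = \langle T_m \delta_{i'}, \delta_{i'}^\vee\rangle$ for all $m$. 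Translating back through the Deuring correspondence, $B(m)_{ii} = B(m)_{i'i'}$ for all $m \geq 0$, which says that the right $\calO$-ideals $I_i$ and $I_{i'}$ have the same number of elements of each norm after scaling — i.e.\ $(\End(E_i),\Nm)$ and $(\End(E_{i'}),\Nm)$ are \emph{isometric as quadratic lattices}. The remaining, and genuinely substantive, step is to show that a supersingular elliptic curve is determined up to $\{j, j^p\}$ by the isometry class of the quadratic lattice $(\End(E),\deg)$: this is exactly the statement that distinct pairs $\{j,j^p\}$ give non-isometric endomorphism lattices. I expect \textbf{this last step to be the main obstacle}, since isometric rank-$4$ lattices need not be isomorphic as quaternion orders in general; the resolution should come from the fact that $\End(E)$ is a \emph{maximal} order in $B_{p,\infty}$ (so its isometry class and its isomorphism-as-ring class are tightly linked — a maximal order is recovered from its norm form up to conjugacy and the main involution), combined with the count $h_{B_{p,\infty}} = 2t_{B_{p,\infty}} - u_{B_{p,\infty}}$ from \S\ref{subsubsec: ideal classes} to control the fibers of $j \mapsto \End(E(j))$; one handles the curves with extra automorphisms ($j = 0, 1728$) separately using that the shape of $\Theta_E$ (its leading coefficients $r_E(0) = 2w_E$, $r_E(1)$) already detects $\sharp\Aut(E)$.
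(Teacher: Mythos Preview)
Your argument has a genuine gap at a critical step, and it is in fact circular. You correctly set up the Hecke-module decomposition and correctly observe that $\Theta_E$ determines the masses $\|c_{i,f}\|^2$ in each eigenline. But then you write: ``$B(m)_{ii}=B(m)_{i'i'}$ for all $m\geq 0$, which says that \dots\ $(\End(E_i),\Nm)$ and $(\End(E_{i'}),\Nm)$ are \emph{isometric as quadratic lattices}.'' That implication is simply false in general: two lattices can have identical representation numbers (equal theta series) without being isometric --- the paper even cites the Conway--Sloane example \cite{ConwaySloane} precisely to make this point. The equality $B(m)_{ii}=B(m)_{i'i'}$ for all $m$ is nothing more than a restatement of the hypothesis $\Theta_E=\Theta_{E'}$ (once $w_i=w_{i'}$ is read off from $r_E(1)=\sharp\Aut(E)$, not $r_E(0)$ as you wrote). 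So what you call ``the remaining, and genuinely substantive, step'' --- passing from isometric lattices to isomorphic orders --- is not where the difficulty lies; you never got to ``isometric'' in the first place. Your sign-rigidity idea (that only one choice of signs among the $c_{i,f}$ yields a standard basis vector, up to the Atkin--Lehner involution) is the right place to look, but you do not carry it out, and proving it is exactly equivalent to the theorem.

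The paper's proof takes a completely different, concrete route: rather than working in the Hecke module, it reconstructs the \emph{Gross lattice} $\calO^T$ (the ternary lattice image of $\calO$ under $x\mapsto 2x-\Tr(x)$) directly from the Fourier coefficients of $\Theta_E$. One peels off the successive minima layer by layer: the first nonzero coefficient of $\Theta_E-\Theta_\ZZ$ locates the shortest non-integer endomorphism $\alpha$; Minkowski's bounds combined with a lemma of Kaneko (constraining not just norms but \emph{angles} between short elements generating distinct quadratic suborders) pin down the minimal polynomial of $\alpha$ and then of the next independent short vector $\beta$. The fourth successive minimum is more delicate because Minkowski's bound is too weak for Kaneko's lemma to apply directly, and a finer case analysis of the initial coefficients of $\Theta_E-\Theta_{\ZZ+\ZZ\alpha+\ZZ\beta}$ is needed. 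Once all angles in $\calO^T$ are determined, its isometry class is known, and a result of Chevyrev--Galbraith \cite{CG} says this determines the maximal order up to isomorphism --- hence $E$ up to Frobenius.
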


This raises the question whether one can construct the order $\End(E)$ from $\Theta_E$. The answer to that is yes, and we discuss that after discussing the proof Theorem~\ref{thm: theta determines E}. 

\subsubsection{Discussion of the proof of Theorem~\ref{thm: theta determines E}} 
The proof of Theorem~\ref{thm: theta determines E} is hard to conceptualize, and that is one of the difficulties in extending it. Here are some of the main ideas used in the proof.
\begin{enumerate}
\item Throughout the proof there is an interplay between a maximal order $\calO$ of $B_{p, \infty}$ and its {\bf Gross lattice} $\calO^T$ used much in \cite{Gross}. This is a ternary lattice which is the image of $\calO$ under the map $x \mapsto 2x - \Tr(x)$. Its importance is that it classifies embeddings of quadratic imaginary orders into $\calO$; this construction can easily be generalized to quaternion algebras $B$ over number fields $L$ and the classification of embeddings of monogenic orders containing $\calO_L$ in quadratic extensions of $L$ into maximal orders of $B$. 
	\item \textit{Peeling off the first layer:} the power series $\Theta_{E}(q)-\Theta_{\ZZ}(q)$ counts the number of non-integer elements in $\End(E)$ of each norm; thus if the first coefficient is $cq^n$, we know there is at least one non-integer element $\alpha$ of norm $n$. Since there are no elements of smaller norm, we know the trace of $\alpha$ must be $0$ or $\pm1$ (otherwise we could subtract an integer to find an element of smaller norm); i.e. we have either $\alpha=\sqrt{-n}$ or $\alpha=\frac12(\pm 1+\sqrt{1-4n})$ in the corresponding quadratic field. Minkowski's bounds tell us that~$n$ is small in comparison to $p$. 
	\item A key lemma, the original proof of which we can find in the literature is due to Kaneko \cite{Kaneko}, but it has been used by various other authors independently, e.g. \cite{GLa}, gives constraints not only on the norms of elements of a quaternion order generating distinct imaginary quadratic orders, but as proven in \cite{GL} also on  the \textit{angles} between such elements. 	
	This constraint is especially strong when both elements are relatively small compared to the discriminant of the quaternion order. 
	\item Combining  Minkowski's bounds with  Kaneko's lemma, we find that the elements of norm $n$ cannot lie in distinct-but-isomorphic quadratic orders. Thus we can learn the full minimal polynomials of the elements of norm $n$ from the value of $c$ alone (since $\ZZ[\sqrt{-n}]$ has two elements of norm $n$ and $\ZZ[\frac12(1+\sqrt{1-4n})]$ has four), giving us the full lattice structure of a rank $2$ sub lattice $\ZZ[\alpha]$ of $\calO$.
	\item \textit{Peeling off the second layer:} Repeat the above steps but for $\Theta_{E}(q)-\Theta_{\ZZ[\alpha]}(q)$ to find the next independent short element $\beta$. Kaneko's lemma  can again be used to identify the minimal polynomial of $\beta$ as well as its angle with $\alpha$, giving us now the full lattice structure of a rank $3$ sub lattice $\ZZ +\alpha\ZZ+\beta\ZZ$ of $\calO$.
	\item \textit{Peeling off the third layer:} Unfortunately, the Minkowski bounds on the fourth successive minimum are not strong enough for Kaneko's lemma to apply directly anymore; that is, it is possible for there to be multiple shortest elements of $\End(E) - (\ZZ +\alpha\ZZ+\beta\ZZ)$ that satisfy the same minimal polynomial. At this point a more careful analysis of the lattice geometry is needed: we prove that for each possible minimal polynomial of the last successive minimum, only certain initial sequences of coefficients of $\Theta_E(q)-\Theta_{\ZZ +\alpha\ZZ+\beta\ZZ}(q)$ are possible, and these possibilities do not overlap. Thus the theta function allows us to determine the minimal polynomial of a fourth independent vector~$\gamma$. 
	\item Once again, Kaneko's lemma allows us to determine the angles between $\alpha,\beta,\gamma$, and from this also the angles between their images in the Gross lattice. This determines the isometry type of the Gross lattice, which uniquely determines the quaternion order up to isomorphism by a result of Chevyrev and Galbraith \cite{CG}. 
\end{enumerate}

\subsubsection{Determining the order from its theta function} Let $\Theta$ be a weight $2$ modular form on $\Gamma_0(p)$ that we know to be equal to $\Theta_E$ for some supersingular elliptic curve $E$. Let also $\calO = \End(E)$. We can also say that $\Theta$ is associated to the maximal order $\calO$ of $B_{p, \infty}$. The question we address here is how to determine $\calO$. 

The proof of Theorem~\ref{thm: theta determines E} proceeds, as we have discussed, by determining from the theta function $\Theta$ the successive minima of the Gross lattice $\calO^T$, and the angles between the vectors realizing them. Thus, one is able to reconstruct the Gross lattice $\calO^T$ from $\Theta$. One can then find the maximal order $\calO$ by the taking the even integral Clifford algebra of $(\calO^T)^\vee$, the dual lattice of $\calO^T$. See \cite[\S\S 22.1 - 22.3]{Voight}. 

\subsubsection{Determining the elliptic curve from the statistics of closed paths in $\scrG(p, \ell)$} Consider a supersingular elliptic curve $E$ as a vertex of the supersingular graph $\scrG(p, \ell)$ and consider the statistics of closed paths in $\scrG(p, \ell)$, based at $E$, of all lengths $k$. Those are precisely the diagonal entries $B(\ell^k)_{11}$ (where in the enumeration of the supersingular elliptic curves $E$ is declared the first one). Those are also the coefficients $\{r_E(\ell^k): k = 0, 1, 2, 3 \dots \}$ in $\Theta_E$. We claim that if $T_\ell$ generates the Hecke algebra $\TT$ (which, as noted above, occurs for a density $1$ set of primes $\ell$) then this information alone is sufficient to determine $E$ up to Frobenius base change. 

\begin{cor} Assume that $T_\ell$ generates the Hecke algebra $\TT$ of $X_0(p)$ and $E, E^\prime$ are supersingular elliptic curves such that in their theta functions the coefficients of the powers $q^{\ell^k}, k = 0, 1, 2, 3...$ agree. Then, $\Theta_E = \Theta_{E^\prime}$ and so $E^\prime \cong E$ or $E^\prime \cong E^{(p)}$ over $\fpbar$. 
\end{cor}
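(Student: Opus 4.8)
The plan is to deduce that the hypothesis already forces the equality of the full theta functions $\Theta_E = \Theta_{E'}$; Theorem~\ref{thm: theta determines E} then immediately gives $j' \in \{j, j^p\}$, i.e.\ $E' \cong E$ or $E' \cong E^{(p)}$ over $\fpbar$. The organizing observation is that, enumerating $E$ as the $i$-th supersingular curve, the $n$-th coefficient of $\Theta_E$ is $r_E(n) = 2 w_i\, B(n)_{ii}$, so the assignment $n \mapsto r_E(n)$ factors as $n \mapsto B(n) \in \TT$ followed by the $\CC$-linear functional $\psi_E \colon T \mapsto 2 w_i \cdot (T)_{ii}$ on the Brandt/Hecke algebra $\TT$. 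Consequently $\Theta_E$ is determined by the restriction of $\psi_E$ to the set $\{B(n): n\ge 0\}$ --- and since a linear functional is determined by its values on any spanning set, $\Theta_E$ is in fact determined by the single sequence $\psi_E(B(\ell^k)) = r_E(\ell^k)$, $k \ge 0$, \emph{provided} $\{B(\ell^k): k \ge 0\}$ spans $\TT$.

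So the first step is to check that spanning statement. By hypothesis $T_\ell$ generates $\TT$, and under the identification of the Hecke algebra of $X_0(p)$ with the algebra generated by the Brandt matrices (with $T_\ell \leftrightarrow B(\ell)$) this reads $\TT = \CC[B(\ell)]$, which is spanned by $\{B(\ell)^k : k \ge 0\}$. The recursion $B(\ell^{a+1}) = B(\ell) B(\ell^a) - \ell B(\ell^{a-1})$ together with $B(\ell^0) = \Id$ and $B(\ell^1) = B(\ell)$ shows, by an immediate induction on $k$, that $\{B(\ell^0), \dots, B(\ell^k)\}$ and $\{\Id, B(\ell), \dots, B(\ell)^k\}$ have the same $\ZZ$-span; hence $\{B(\ell^k): k \ge 0\}$ has the same $\CC$-span as $\{B(\ell)^k: k\ge 0\}$, namely all of $\TT$. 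In particular every $B(n)$, including the terms with $p \mid n$ or $\ell \mid n$, is a $\CC$-linear combination of the $B(\ell^k)$.

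The second step is then a formality. Writing $E = E_i$ and $E' = E_{i'}$, the hypothesis says $\psi_E(B(\ell^k)) = r_E(\ell^k) = r_{E'}(\ell^k) = \psi_{E'}(B(\ell^k))$ for all $k \ge 0$; since $\{B(\ell^k)\}$ spans $\TT$, the linear functionals $\psi_E$ and $\psi_{E'}$ agree on all of $\TT$, so $r_E(n) = \psi_E(B(n)) = \psi_{E'}(B(n)) = r_{E'}(n)$ for every $n \ge 0$, i.e.\ $\Theta_E = \Theta_{E'}$. Applying Theorem~\ref{thm: theta determines E} concludes the proof. I do not expect a genuine obstacle here: the substance is simply the recognition that $n \mapsto r_E(n)$ factors through the commutative algebra $\TT$, which is generated by $B(\ell)$ precisely under our hypothesis. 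The only point deserving care is why one prime $\ell$ suffices --- namely that $\{B(\ell^k)\}$ linearly spans $\TT$ rather than merely generating it multiplicatively, which is exactly what the Hecke recursion supplies. (One could equivalently phrase everything via the pairing on $\mathrm{Fun}(p,\ell)$ from \S\ref{sssection: vertex functions}, writing $r_E(n) = 2 w_i \langle B(n)\delta_i, \delta_i^\vee\rangle$, but this is cosmetic.)
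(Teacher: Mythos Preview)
Your proof is correct and in fact somewhat cleaner than the paper's. Both arguments hinge on the same observation, namely that the Hecke recursion $B(\ell^{a+1}) = B(\ell)B(\ell^a) - \ell B(\ell^{a-1})$ makes the $\CC$-span of $\{B(\ell^k):k\ge 0\}$ coincide with that of $\{B(\ell)^k:k\ge 0\}$, so that under the hypothesis ``$T_\ell$ generates $\TT$'' this span is all of $\TT$. The difference is only in packaging: the paper passes to the eigenform decomposition of the cusp form $F = \Theta_E - \Theta_{E'}$ in $S_2(\Gamma_0(p))$, notes that the $T_\ell$-eigenvalues $\lambda_1,\dots,\lambda_n$ are distinct (by the generation hypothesis plus multiplicity one), expresses $a_r(\ell^k)$ as a monic degree-$k$ polynomial in $\lambda_r$, and then kills the coefficients via a Vandermonde determinant. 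You bypass the eigenbasis entirely by recognizing $n\mapsto r_E(n)$ as the composite of $n\mapsto B(n)\in\TT$ with the linear functional $T\mapsto 2w_i\,(T)_{ii}$; once $\{B(\ell^k)\}$ spans $\TT$, equality of functionals on this spanning set forces equality of all coefficients. Your route avoids invoking multiplicity one and the cusp-form/eigenform machinery, at the cost of being less explicit about how many coefficients are actually needed (the paper's Vandermonde shows that the first $n = \dim S_2(\Gamma_0(p))$ powers already suffice).
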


In fact, a similar statement holds even if $\TT$ is not generated by a single Hecke operator. 
In general, $\TT$ is generated by some finite set of Hecke operators, say $T_{\ell_1},\ldots,T_{\ell_s}$. Using an argument like the one below, one can show that if the coefficients of the powers $q^{\ell_i^k}$ agree for all $i=1,\ldots,s$ and $k\geq 0$, then $\Theta_E = \Theta_{E^\prime}$. But for simplicity we only consider the case $s=1$ below. 

By considering the form $F=\Theta_E-\Theta_{E'}$, which is a cusp form since the Eisenstein space of $M_2(\Gamma_0(p))$ is one-dimensional, it suffices to prove the following under the same assumption that $T_\ell$ generates $\TT$: given a cusp form $F\in S_2(\Gamma_0(p))$, if the coefficient in $F$ of $q^{\ell^k}$ vanishes for all $k=0,1,2,3,\ldots$, then $F=0$. 

Let $f_1,\ldots,f_n$ be a basis of normalized eigenforms in $S_2(\Gamma_0(p))$ (here $n$ is one less than the class number of $B_{p,\infty}$), and write
\[F=\sum_{r=1}^n c_rf_r,\qquad T_\ell(f_r)=\lambda_rf_r.\]
If $\lambda_{a} = \lambda_b$ then, since $T_\ell$ generates the Hecke algebra, we have $Tf_a = Tf_b$ for every Hecke operator $T\in \TT$. But by multiplicity one for $M_2(\Gamma_0(p))$, any two distinct eigenforms must have distinct eigenvalues for some Hecke operator. Thus we can conclude that the eigenvalues $\lambda_1,\ldots,\lambda_n$ of $T_\ell$ are distinct.

Now write a Fourier expansion for each cusp form, $f_r=\sum a_r(i)q^i$. By assumption on the coefficients of $q^{\ell^k}$ in $F$, we have 
\[\sum_{r=1}^n c_ra_r(\ell^k)=0\]
for all $k$. On the other hand, we can compute the coefficients $a_r(\ell^k)$ explicitly using the relations
\begin{align*}
	a_r(\ell)&=\lambda_r,\\
	a_r(\ell^k)&=a_r(\ell)a_r(\ell^{k-1})-\ell^{k-1}a_r(\ell^{k-2})
\end{align*}
that hold for any normalized eigenform. This allows us to write each $a_r(\ell^k)$ as a monic integer degree $k$ polynomial in $\lambda_r$, for instance
\begin{align*}
	a_r(\ell^2)&=\lambda_r^2-\ell,\\
	a_r(\ell^3)&=\lambda_r^3-(\ell+\ell^2)\lambda_r,
\end{align*}
and so on. Thus, using the fact that $\sum c_r a_r(\ell^k)=0$, by induction on $k$ we find that
\[\sum_{r=1}^n c_r\lambda_r^k=0\]
for all $k\geq 0$. Since $\lambda_1,\ldots,\lambda_n$ are distinct, this implies that $c_1=c_2=\cdots=c_n=0$ by a Vandermonde determinant calculation: that is, we have
\[\begin{psmallmatrix}
	1 & 1 & 1 & \cdots & 1 \\
	\lambda_1 & \lambda_2 & \lambda_3 & \cdots & \lambda_n \\
	\lambda_1^2 & \lambda_2^2 & \lambda_3^2 & \cdots & \lambda_n^2 \\
	 & \vdots &  &  & \vdots \\
	 \lambda_1^{n-1} & \lambda_2^{n-1} & \lambda_3^{n-1} & \cdots & \lambda_n^{n-1} \\
\end{psmallmatrix}\begin{psmallmatrix}
	c_1 \\ \phantom{\lambda_1}\hspace{-9pt}c_2 \\ \phantom{\lambda_1^n}\hspace{-9pt}c_3 \\ \vdots \\ \phantom{\lambda_1^n}\hspace{-9pt}c_n
\end{psmallmatrix}=0,\]
where the square matrix has determinant $\prod_{1\leq i<j\leq n}(\lambda_j-\lambda_i)\neq 0$.
In fact, this shows that we only need to know the $q^{\ell^k}$ coefficient vanishes for $k=0,\ldots, n-1$ in order to conclude $F=0$.

\section{Generation of totally real lattices by special elements}\label{section6} Our purpose in this section is to generalize Theorem~\ref{thm: special generators for a lattice}  to lattices over totally real fields. Besides its intrinsic appeal, 
this result also has implications for a generalization of
supersingular isogeny graphs $\scrG(p, \ell)$ that we define below. This generalization uses Hilbert modular varieties and should have interesting applications to cryptography.

Much more background is assumed of the reader in this section, as it would be unrealistic to introduce from scratch the rather high-powered theories from arithmetic geometry and automorphic forms that we need. 

A natural, but different, extension of the theory is to superspecial principally polarized abelian varieties -- see  \cite{ATY, CDS, CostelloSmith,  FT, Jordan, JZ} and references therein. In fact, the superspecial isogeny graphs coming from Hilbert modular varieties have implications to the study of those coming from Siegel modular varieties (see \S \ref{sec: ssg for tot real} below).

\subsection{A brutally short introduction to abelian varieties with real multiplication}\label{sec: intro RM}
The books \cite{GorenBook, vdG} and the foundational papers \cite{DP, Rapoport} provide general background on abelian varieties with real multiplication 
and their moduli spaces.

To simplify the exposition, and to rely on results already in the literature, we put ourselves in the following setting. Let $L$ denote a totally real field of degree $g$ over $\QQ$, and let $\calO_L$ be the ring of integers of $L$. We assume that $L$ has strict class number $1$; that is, all ideals are principal, and there exist units in $\calO_L$ of all possible sign combinations. We denote by $\gerd_L$ the different ideal, $\delta_L$ a totally positive generator of $\gerd_L$, and $d_L:=N_{L/\QQ}(\delta_L)$ the discriminant of $L/\QQ$. We also choose a rational prime $\ell$ that is inert in~$L$, so that $\ol/\ell\ol \cong \FF_{\ell^g}$. From the point of view of algebraic number theory the assumptions on the class number and decomposition of $\ell$ are rather severe restrictions, but much of what we discuss below
can be generalized to arbitrary class numbers and arbitrary rational primes $\ell$ that are, say, unramified in $L$. We also fix a rational prime $p$, $p \neq \ell$, that is unramified in $L$. Using this data, we will construct analogues $\scrG_L(p, \ell)$ of the supersingular graphs $\scrG(p, \ell)$. 

\

\id Let $A$ be a $g$-dimensional abelian variety over a field. Suppose that $A$ comes equipped with an action of $\ol$ -- that is, with an injective ring homomorphism $\iota\colon \ol \arr \End(A)$ -- and a principal $\ol$-linear polarization $\lambda:A\to A^\vee$. Here $\ol$-linear means that if we equip the dual abelian variety $A^\vee$ with the dual $\ol$-action $\iota^\vee\colon \ol \arr \End(A^\vee)$, where $\iota^\vee(\alpha):=(\iota(\alpha))^\vee$, then we require $\lambda \circ \iota = \iota^\vee \circ \lambda$. 
With this setup, the triple  $\uA = (A, \iota, \lambda)$ is called an {abelian variety with real multiplication by $\ol$}, which we abbreviate as an {\bf AV with RM} (the field $L$ being understood from the context).  

There is a coarse moduli scheme $\calM_L$ for such objects. It is a quasi-projective scheme, flat of relative dimension $g$ over $\Spec(\ZZ[d_L^{-1}])$. The scheme has quotient singularities that can be resolved by introducing any rigid level-$N$ structure, for $N\geq 3$ and $(N, d_L) = 1$, at which point the moduli scheme becomes smooth over $\Spec(\ZZ[\zeta_N][(Nd_L)^{-1}])$. There is a natural isomorphism
\[ \calM_L(\CC) \cong \SL_2(\ol) \backslash \mathfrak{H}^g, \]
where $\mathfrak{H} = \{ z: \Ima(z)>0\}$ is the complex upper half space and $\SL_2(\ol)$ acts diagonally as M\"obius transformations, using the $g$ different embeddings $\sigma_i\colon L \arr \RR$. 

If $A$ is complex abelian variety with RM and $A \cong \CC^g/\Lambda$, then $\Lambda$ is a torsion-free $\ol$-module of rank $2$, and is hence projective. So for some $\ol$-ideals $\gera, \gerb$ we have $\Lambda \cong \gera \oplus \gerb$ as an $\ol$-module, and since we assume $L$ has strict class number $1$ this implies $\Lambda \cong \mathcal{O}_L^2$. However, the realization of $\Lambda$ as a lattice in $\CC^g$ depends on $A$ (much as is the case for elliptic curves). This is the first step in parametrizing complex abelian varieties with RM by $(\tau_1, \dots, \tau_g)\in \gerH^g$ (loc. cit. and \cite{BiLa}).

\medskip

\id
The mod $p$ fiber of $\calM_L$, denoted $\overline{\calM}_L$ ($p$ being understood from the context), classifies such triples $\uA$ over fields of characteristic $p$. In the Goren-Oort stratification \cite{GO} of  $\overline{\calM}_L$, which is stable under prime-to-$p$ Hecke operators, the smallest stratum consists of finitely many points called {\bf superspecial points} that correspond to principally polarized superspecial abelian varieties with real multiplication ({\bf ssAV with RM}). 

There are several equivalent definitions of superspecial abelian varieties. For instance, over any algebraically closed field $k$ of characteristic $p$, one can define a $g$-dimensional superspecial abelian variety as an algebraic group isomorphic to $E_1 \times \cdots \times E_g$, where the $\{ E_i\}$ are supersingular elliptic curves. In fact, by a remarkable theorem of Deligne, if $g>1$ then for any fixed  supersingular elliptic curve $E$, $E_1 \times \cdots \times E_g\cong E^g$ \cite{Shioda}. Fix such an elliptic curve $E$ and let $\calO$ be its endomorphism ring, which is a maximal order of $B_{p, \infty}$. Thus, what distinguishes superspecial abelian varieties with RM is compatible data:
\[ \iota\colon \ol \arr M_g(\calO) = \End(E^g), \quad \Xi\in M_g(\calO), \]
where $\Xi$, which encodes the polarization, is a matrix that (1) is symmetric and positive relative to the Rosati involution $(m_{ij}) \mapsto (\overline{m}_{ji})$ induced by the product polarization of $E^g$; (2) commutes with $\iota(\ol)$; (3) is a unit of $M_g(\calO)$. All this taken modulo the action of $\Aut(E^g) = M_g(\calO)^\times$. Note that this is rather explicit data. 

There are finitely many points corresponding to ssAVs  with RM in the moduli space $\overline{\calM}_L$; let $\eta = \eta(L)$ denote their number. Explicit mass formulae exist (see \cite{BG} for $g=2$ and \cite{Yu} in general), but a rough estimate is 
\begin{equation}\label{eqn: 1341}
\eta \approx 2^{1-g}p^g \vert \zeta_L(-1)\vert,
\end{equation}
where $\zeta_L$ is the Dedekind zeta function of $L$ \cite[Proposition 3.6]{CGL2}. Using the functional equation for $\zeta_L$,
we have  $\zeta_L(-1) = \frac{(-1)^g}{2^g\pi^{2g}} d_L^{3/2} \zeta_L(2)$,  
from which one can conclude that 
\begin{equation}\label{eqn: 1342}
\vert \zeta_L(-1) \vert \approx  \frac{1}{2^g\pi^{2g}}d_L^{3/2}. 
\end{equation} 
So when $g$ is held constant, the number of ssAVs  with RM by $\ol$ in characteristic $p$ grows like $p^g d_L^{3/2}$ 
in the sense that the ratio is bounded above and below by constants that can be explicitly computed.
There is much more to say, especially for $L$ quadratic; see \cite{Zagier} for a masterful exposition. 

\vspace{0.6cm}

\id Equivalent characterizations of $A$ being superspecial are \begin{enumerate}
\item  The degree $p^g$ isogeny $\Ver_{A} := \Fr_{(A^{(1/p)})^\vee}^\vee\colon A \arr A^{(1/p)}$, satisfies  $\Ker(\Ver_A) = \Ker(\Fr_A)$; 
\item There is an embedding $\alpha_p^g \injects A[p]$. 
\end{enumerate}The equivalence of (1) and (2) follows immediately from the theory of finite commutative group schemes, and the equivalence of (2) with the definition of superspecial given above is a theorem due to Oort \cite{Oort}. 

We remark that for $g>1$, these are stronger requirements than just requiring that $A[p]$ has no nontrivial $k$-points. Using these characterizations, it is not hard to prove that if $A$ is superspecial and $f\colon A \arr B$ is an isogeny of $g$-dimensional abelian varieties of degree prime to $p$, then $B$ is superspecial too; this statement may fail if $p \vert \deg(f)$.

Suppose that $k$ is a field of characteristic $p$ and $\uA, \uB$ are ssAV with RM over $k$; we let
\[ \Hom_k(\uA, \uB) := \Hom_{\ol,k}(A, B).\]
Namely, this is the $\ol$-module of homomorphisms $A \arr B$ that commute with the action of~$\ol$ and are defined over $k$. If $k$ is algebraically closed, we typically omit the subscript $k$ in $\Hom_k(\uA, \uB)$.

If $k$ is algebraically closed, it was proven in \cite{Nicole} that $\End(\uA)$ is an order of discriminant ideal $p\ol$ in the quaternion algebra $B_{p, L}:=B_{p, \infty} \otimes_\QQ L$. Note that the discriminant of the quaternion algebra $B_{p, L}$ may strictly contain $p\calO_L$, in fact it may be as large as~$\ol$. Thus, $\End(\uA)$ need not be a maximal order, but it is always an Eichler order. 

\begin{exa}\label{ex:E tensor OL} Let $E$ be a supersingular elliptic curve over $\FF_{p^2}$ with endomorphism ring $\calO$, a maximal order of $B_{p, \infty}$. The functor $E\otimes_\ZZ\ol$ is the functor that takes a characteristic $p$ commutative ring $R$ to 
\[ (E\otimes_\ZZ\ol) (R) : = E(R) \otimes_\ZZ \ol.\]
This functor is representable by $E^g$, where an isomorphism is established by choosing a $\ZZ$-basis of $\ol$, but writing it as a tensor product as above endows it with a canonical action of~$\ol$: $\iota(\alpha)(P\otimes \beta) = P \otimes \alpha\beta$. There is also a compatible $\ol$-linear polarization on $E\otimes_\ZZ\ol$ because we have canonical identifications  \[(E\otimes_\ZZ\ol)^\vee = E^\vee \otimes_\ZZ \gerd_L^{-1} = E\otimes_\ZZ \gerd_L^{-1},\]
and the map $\Id \otimes \delta_L^{-1}$ is the $\ol$-linear principal polarization. We thus obtain the structure of an ssAV with RM, and  one can verify directly that
\[\End\left(\underline{E\otimes_\ZZ\calO}_L\right)=\End_{\ol}(E\otimes_\ZZ\ol) = \calO \otimes_\ZZ \ol\subset B_{p, \infty}\otimes_\QQ L = B_{p, L}.\]
The paper \cite{Conrad} contains useful results about such tensor constructions. 
\end{exa}

\subsection{RM superspecial isogeny graphs}\label{sec: ssg for tot real} Our main reference is \cite{CGL2}, where many of the missing details can be found, and where the construction is given in much greater generality. 

With the conventions of \S\ref{sec: intro RM}, let $j_1, \dots, j_\eta$ ($\eta = \eta(L)$) denote the superspecial points of $\overline{\calM}_L$ and let $\uA(j_1), \dots, \uA(j_\eta)$ be the corresponding ssAVs with RM. In fact, we can choose for each a model over $\FF_{p^2}$ such that $\Fr_{A(j_k)}^2 = [-p]$, and so for any two we have \[\Hom_{\fpbar}(\uA(j_a), \uA(j_b)) = \Hom_{\FF_{p^2}}(\uA(j_a), \uA(j_b)).\] In particular, $\Hom_{\FF_{p^2}}(\uA(j_a), \uA(j_b))$ is a module of rank $4$ over $\ol\subset \End(A(j_a))$. Nicole \cite{Nicole} shows that it is an integral quadratic module as follows:

Write $\uA(j_a) = (A_a, \iota_a, \lambda_a)$ and similarly for $\uA(j_b)$. Given $f\in \Hom_{\fpbar}(\uA(j_a), \uA(j_b))$ consider $\lambda_a^{-1}f^\vee \lambda_b f$. This homorphism belongs to $\End(\uA(j_a))$ and is fixed by the Rosati involution associated to $\lambda_a$ (this is the involution that takes $h\in \End(\uA(j_a))$ to $\lambda_a^{-1}h^\vee \lambda_a$),
because, being polarizations, $\lambda_a$ and $\lambda_b$ are self-dual. The elements of $\End(\uA(j_a))$ fixed by the Rosati involution are precisely $\ol$, so we get a quadratic form
\[ \Hom_{\fpbar}(\uA(j_a), \uA(j_b)) \arr \ol, \qquad f \mapsto \deg_L(f):= \lambda_a^{-1}f^\vee \lambda_b f.\]
As in the case of elliptic curves, all these quadratic modules are in the same genus and so to establish some of their properties one may reduce to the case of a single ssAV with RM $\uA=(A, \iota, \lambda)$ and its endomorphism ring $\End(\uA)$. It is convenient to take $\uA = E\otimes\ol$ and $\End(\uA) = \calO\otimes \ol$ as in Example~\ref{ex:E tensor OL}, and this can be made as explicit as one wants by a judicious choice of $E$. The quadratic form on this rank $4$ $\ol$-module is 
\begin{equation} f \mapsto \lambda^{-1}f^\vee \lambda f = f^\dagger f, 
\end{equation}
where $f^\dagger := \lambda^{-1}f^\vee \lambda$ is the image of $f$ under the Rosati involution $f\mapsto f^\dagger$ associated to $\lambda$. In the model $\calO\otimes \ol$ the quadratic form is just the map $f\otimes \alpha \mapsto \Nm(f) \otimes \alpha^2$. In this form we can see that the map is a totally definite quadratic form, by which we mean that under any embedding $\sigma_i\colon L \arr \RR$ this form is positive-definite as a real-valued form. 

\label{further discussion}Even more concretely, $\calO$ always contains a finite index lattice $\Lambda = \ZZ \oplus \ZZ i \oplus \ZZ j \oplus \ZZ k$, where $i^2 = a$ and $j^2 = b$ are negative integers,  $k = ij = -ji$, and the norm form on this lattice is simply $x^2 - \alpha y^2 - \beta z^2 + \alpha\beta w^2$ for $x, y, z, w \in \ZZ$. Then $\calO \otimes \ol$ contains the $\ol$ lattice $\Lambda\otimes \ol = \ol \oplus \ol i \oplus \ol j \oplus \ol k$ and the norm form is 
\begin{equation}
x^2 - \alpha y^2 - \beta z^2 + \alpha\beta w^2, \quad x, y, z, w \in \ol.
\end{equation}

\

\id We now construct a graph $\scrG_L(p, \ell)$ (an {\bf RM superspecial isogeny graph}) whose vertices $1, \dots, \eta$ correspond to the superspecial points $j_1, \dots, j_\eta$ of $\overline{\calM}_L$ or, equivalently, to the ssAVs with RM $\uA(j_1), \dots \uA(j_\eta)$ (which we relabel more simply as $\uA_1, \dots, \uA_\eta$). Each $\uA_j$ has a preferred model as chosen above. Consider a vertex $\uA = (A, \iota, \lambda) \in\{ \uA_1, \dots, \uA_\eta\} $. Since $\ell$ is inert in $L/\QQ$ we have $\uA[\ell]\cong (\ol/\ell\ol)^2\cong \FF_{\ell^g}^2$, and so every proper nontrivial $\ol$-invariant subgroup scheme~$H$ of $\uA[\ell]$ is isomorphic to $\ol/\ell\ol$. There are $\ell^g +1$ of these, and they are automatically isotropic relative to the polarization $\lambda$. Thus, by Mumford's descent theory, there is a unique $\ol$-structure~$\iota'$ and principal $\ol$-polarization $\mu$ on $A/H$ such that under the isogeny $f\colon A \arr A/H$ we have $f^\ast\mu  = \ell \lambda$. We connect the vertex $\uA$ to all the vertices $\underline{A/H}=(A/H,\iota',\mu)$ constructed this way. Note also that any $\ol$-homomorphism $\uA \arr \uA^\prime$ of degree $\ell^g$ and whose kernel is contained in $\uA[\ell]$ must arises this way. We may thus also think of edges as defined by such homomorphisms, up to automorphisms of the image $\uA^\prime$. This gives us a directed graph $\scrG_L(p, \ell)$ with $\eta$ vertices, all with out-degree $\ell^g+1$. These graphs were explored in \cite{CGL2}; for related previous work see \cite{Livne} and references therein. 

Some important properties that these graphs have in common with supersingular isogeny graphs of elliptic curves are that (1) they are connected, (2) they have very few loops and multiple edges, and (3)  they are Ramanujan \cite{CGL2}. They are thus suitable candidates for generalizing the CGL hash function and other cryptographic applications. It will be interesting to consider which other known properties of the graphs $\scrG(p, \ell)$ generalize to $\scrG_L(p, \ell)$. 

The RM superspecial isogeny graphs $\scrG_L(p, \ell)$ are related to another class of graphs, which have vertices corresponding to principally polarized superspecial abelian varieties of dimension $g$, and edges corresponding to isogenies whose kernel is a maximal isotropic subgroup of the $\ell$-torsion. Call this graph $\scrG_{\rm S}^g(p, \ell)$ (``S'' for Siegel). In general, the graphs $\scrG_{\rm S}^g(p,\ell)$ are {\it not} Ramanujan \cite[Section 9]{JZ}, demonstrating that the presence of RM structure results in a qualitative difference in the properties of the resulting graph. However, when $L$ is a degree $g$ totally real field of strict class number $1$ and $\ell$ is inert in $L/\QQ$, there is a natural map of graphs
\begin{equation}\label{eq:RM to Siegel} \scrG_L(p, \ell) \arr \scrG_{\rm S}^g(p, \ell)
\end{equation}
that takes $\uA = (A, \lambda, \iota)$ to $(A, \lambda)$. The edges $\uA \arr \uA/H$ in $\scrG_L(p, \ell)$ go to edges in $\scrG_{\rm S}^g(p, \ell)$, because any such $H$ is automatically a maximal isotropic subgroup of the $\ell$-torsion. 

We now focus on the case $g=2$ and present a heuristic to suggest that the map (\ref{eq:RM to Siegel}) is in fact surjective for infinitely many $L$. Let $d\equiv 1 \pmod{4}$ be prime and set $L = \QQ(\sqrt{d})$; note that for any such field the class number $h_L$ and strict class number $h_L^+$ are equal (\cite[Theorems 6.19 and 9.3]{Buell}). To have $\ell$ inert in $L$ is the condition that~$d$ is not a square modulo $\ell$ if $\ell>2$, and $d \equiv 5 \pmod{8}$ if $\ell = 2$. A positive proportion of primes~$d$ satisfy these conditions, and by the Gauss conjecture, or the Cohen-Lenstra heuristics, one expects the majority of real quadratic fields to have class number $1$; taking the intersection, one also expects there to be infinitely many real quadratic fields $L$ of strict class number $1$ in which $\ell$ is inert.

Combining (\ref{eqn: 1341}) and (\ref{eqn: 1342}) one sees that the number of vertices in $\scrG_L(p, \ell)$ grows to infinity like  $d_L^{3/2}$; 
on the other hand, the number of vertices of $\scrG^2_{\rm S}(p, \ell)$ remains constant, and so one would expect the map $\scrG_L(p, \ell) \arr \scrG^2_{\rm S}(p, \ell)$ to be surjective on vertices for all $L$ with $d_L$ sufficiently large. This is indeed the case \cite[Theorem 2.4]{GLb}. Thus the security of the graphs $\scrG_L(p, \ell)$ and $\scrG^2_{\rm S}(p, \ell)$ appears to be linked.

\medskip

\id We return to our original setting where the degree $g$ of $L/\QQ$ is arbitrary. In \cite{CGL2} more general versions of the graphs $\scrG_L(p, \ell)$ are defined, associated to any prime ideal $\gerl \subset \ol$ that is prime to $p$. The edges are related to isogenies $f\colon \uA \arr \uB$ of ssAVs with RM for which $\Ker(f)$ and $\ol/\gerl$ are isomorphic over $\fpbar$ as $\ol$-modules. The astute reader may wonder about the existence of a principal polarization on $\uB$; this is one place where we use that the strict class number of $L$ is $1$. This assumption allows one to essentially ignore polarizations entirely, for if $\lambda, \mu$ are two principal $\ol$-polarizations on $(A, \iota)$, then the fact that $\ol$ has strict class number $1$ implies that $\mu = \epsilon^2 \lambda$ for some $\epsilon \in \calO_L^\times$, so multiplication by $\epsilon$ induces an isomorphism  $(A, \iota, \lambda)\cong(A, \iota, \mu)$.

In his thesis \cite{Nicole}, see also \cite{Nicole2}, Nicole has developed a Deuring correspondence for superspecial abelian varieties with RM under the strict class number 1 assumption, though there is little doubt that this assumption can be removed. Fixing one superspecial point, say $E\otimes \ol$, there is a bijection between right ideal classes of the order $\End(E\otimes \ol) = \calO\otimes \ol$ and isomorphism classes of ssAV with RM. Many of the properties of the Deuring correspondence carry over to this new context, and the theory fits perfectly with a theory of Brandt matrices as in \cite{Eichler2}; see \cite{CGL2}. 

\subsubsection{Structure and applications of RM superspecial isogeny graphs} \label{subsec: structure of RM isog graph}

Setting a CGL hash function for such graphs is not immediately obvious; one needs a way to encode the vertices, and to enumerate the edges out of a given vertex in a consistent way. We explain the difficulties. 

Given the Jacobian of a genus $2$ curve, the following technique can be used to compute all isogenies from this Jacobian such that the kernel is a maximal isotropic subgroup of the $2$-torsion. Take $6$ points $\scrP$ in the complex plane that lie on a conic $C$; they define a unique double cover $X_1 \arr C \cong \PP^1$ ramified exactly along $\scrP$. Divide the points of $\scrP$ into $3$ pairs, $\{P_1, P_1^\prime\}, \{ P_2, P_2^\prime\}, \{ P_3, P_3^\prime \}$ (there are $15$ ways to do that). Let $L_i$ be the line through $P_i, P_i^\prime$ and let $t_1, t_2, t_3$ the three points where two of the lines $L_1, L_2, L_3$ intersect. Through each point $t_i$ there are two lines that are tangent to $C$ and intersect it in a pair of points $\{Q_i, Q_i^\prime\}$ and that altogether, usually, provide us with $6$ points $\scrQ$ lying on $C$ and so with another double cover $X_2 \arr C$ ramified now along the points of $\scrQ$. One can prove that this corresponds to an isogeny $f\colon \Jac(X_1) \arr \Jac(X_2)$ whose kernel is a $(2, 2)$ maximal isotropic subgroup of $\Jac(X_1)[2]$ (of which there are $15$). And in fact, one gets all such subgroups by a suitable partition of $\scrP$. Here one uses that by means of the Weierstrass points one can write all $2$-torsion points on $\Jac(X_1)$; in fact the differences $\alpha-\beta$ with $
\alpha, \beta \in \scrP$ give all the 2-torsion points of $\Jac(X_1)$ and the kernel of $f$ is then given by $\{ 0 \} \cup \{ (P_i+P_i^\prime) - (P_j+P_j^\prime): 1\leq i< j \leq 3\}$.
Moreover, the isogeny defined by the partition $\{ Q_i, Q_i^\prime\}_{i=1}^3$ of $\scrQ$ is exactly the dual isogeny $f^\vee\colon \Jac(X_2) \arr \Jac(X_1)$. See \cite{BM, DL}.

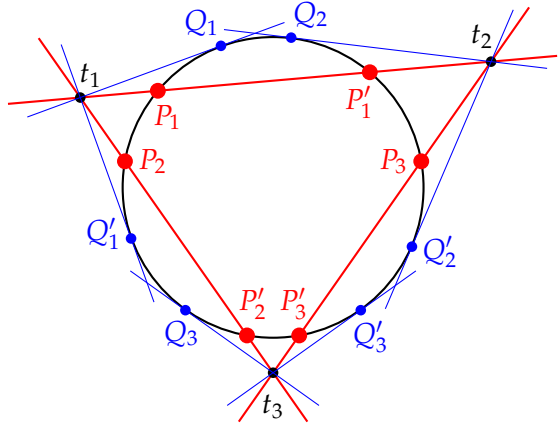
\begin{figure}
	
\begin{tikzpicture}
	
	\draw[name path=circ, thick] (0,0) circle (2);
	\node[circle] (S) at (0,0) [minimum size=4cm] {};
	
	\path[name path=P1] (140:2) coordinate (P1);
	\path[name path=P1'] (50:2) coordinate (P1');
	\path[name path=P2] (170:2) coordinate (P2);
	\path[name path=P2'] (260:2) coordinate (P2');
	\path[name path=P3] (10:2) coordinate (P3);
	\path[name path=P3'] (280:2) coordinate (P3');
	
	\fill[red] (P1) circle (3pt) node[anchor=110,inner sep=5pt] {$P_1$};
	\fill[red] (P1') circle (3pt) node[anchor=70,inner sep=5pt] {$P_1'$};
	\fill[red] (P2) circle (3pt) node[anchor=180,inner sep=5pt] {$P_2$};
	\fill[red] (P2') circle (3pt) node[anchor=260,inner sep=6pt] {$P_2'$};
	\fill[red] (P3) circle (3pt) node[anchor=0,inner sep=5pt] {$P_3$};
	\fill[red] (P3') circle (3pt) node[anchor=280,inner sep=6pt] {$P_3'$};
	
	\draw[name path=L1, thick,red] ($(P1)!-2cm!(P1')$) -- ($(P1')!-2.5cm!(P1)$);
	\draw[name path=L2, thick,red] ($(P2)!-2cm!(P2')$) -- ($(P2')!-1.4cm!(P2)$);
	\draw[name path=L3, thick,red] ($(P3)!-2.5cm!(P3')$) -- ($(P3')!-1.4cm!(P3)$);
	
	\path[name intersections={of=L1 and L2, by=A}];
	\path[name intersections={of=L2 and L3, by=B}];
	\path[name intersections={of=L3 and L1, by=C}];
	
	\fill[black] (A) circle (2pt) node[anchor=245] {$t_1$};
	\fill[black] (B) circle (2pt) node[anchor=90, inner sep=8pt] {$t_3$};
	\fill[black] (C) circle (2pt) node[anchor=290, inner sep=5pt] {$t_2$};
	
	\node (A1) at (tangent cs:node=S,point={(A)},solution=1) {};
	\draw[name path=T1, blue] ($(A)!-0.75cm!(A1)$) -- ($(A1)!-0.9cm!(A)$);
	\node (A2) at (tangent cs:node=S,point={(A)},solution=2) {};
	\draw[name path=T2, blue] ($(A)!-0.75cm!(A2)$) -- ($(A2)!-0.9cm!(A)$);
	\node (B1) at (tangent cs:node=S,point={(B)},solution=1) {};
	\draw[name path=T3, blue] ($(B)!-0.75cm!(B1)$) -- ($(B1)!-0.9cm!(B)$);
	\node (B2) at (tangent cs:node=S,point={(B)},solution=2) {};
	\draw[name path=T4, blue] ($(B)!-0.75cm!(B2)$) -- ($(B2)!-0.9cm!(B)$);
	\node (C1) at (tangent cs:node=S,point={(C)},solution=1) {};
	\draw[name path=T5, blue] ($(C)!-0.75cm!(C1)$) -- ($(C1)!-0.9cm!(C)$);
	\node (C2) at (tangent cs:node=S,point={(C)},solution=2) {};
	\draw[name path=T6, blue] ($(C)!-0.75cm!(C2)$) -- ($(C2)!-0.9cm!(C)$);

	\fill[blue] (A1) circle (2pt) node[anchor=350] {$Q_1'$};
	\fill[blue] (A2) circle (2pt) node[anchor=310] {$Q_1$};
	\fill[blue] (B1) circle (2pt) node[anchor=110] {$Q_3'$};
	\fill[blue] (B2) circle (2pt) node[anchor=80] {$Q_3$};
	\fill[blue] (C1) circle (2pt) node[anchor=230] {$Q_2$};
	\fill[blue] (C2) circle (2pt) node[anchor=170] {$Q_2'$};
	
\end{tikzpicture}
\caption{Defining a Richelot isogeny between genus $2$ hyperelliptic curves defined by ramified covers of a conic.}
\end{figure}

This works rather well and has been implemented for walks in $\scrG_{\rm S}^2(p,2)$ \cite{CDS, FT}. When we want to adapt this to the case of superspecial abelian surfaces with real multiplication by $\ol$, certain problems arise; we think they are attractive and hope that an interested reader would resolve them. Assume for example that $2$ is split in $\ol$. (As a concrete example we may take the superspecial curve $X: y^2 = x^5 + 1$, in characteristic $p \equiv 4 \pmod{5}$, that has real multiplication by $L = \QQ(\sqrt{5})$. Then, by \cite{GorR}, $\Jac(X)$ is a ssAV with RM.) The group $\Jac(X_1)[2]$ is a free module of rank $2$ over $\ol/2\ol \cong \FF_2e_1 \oplus \FF_2e_2$. An analysis of the interaction of the pairing with the $\ol$-structure, shows that there are nine $\ol$-invariant maximal isotropic subgroup of $\Jac(X_1)[2]$ (a choice of line in every isotypic $\ol$-submodule of $\Jac(X_1)[2]$). But how to determine which of the fifteen isotropic subgroups are the $\ol$-invariant ones? This necessitates understanding the action of $\frac{1+\sqrt{5}}{2}$ on $\Jac(X_1)[2]$, or as a correspondence on the conic $C\cong \PP^1$. Another problem that arises is how to determine the action $\frac{1+\sqrt{5}}{2}$ on $\Jac(X_2)[2]$. Nonetheless, for certain totally real fields, for example for $\QQ(\sqrt{5})$ there is a lot known. For the state of the art see \cite{CM, CFM} and the references therein.

If such an implementation is possible,
then by analogy with the elliptic curve case, one would expect the security of such hash functions to rest on the difficulty of computing endomorphism rings of ssAVs with RM. Likewise, one would expect there to be an effective method using closed paths in $\scrG_L(p, \ell)$ to generate finite index subrings of $\End(\uA)$. 

In fact, one can prove quite easily that the endomorphisms $f\in \End(\uA)$ satisfying $\deg_L(f) \in \{ \ell^n: n\in \ZZ_{\geq 0}\}$ generate over $\ol$ a subring of finite index of $\End(\uA)$. We prove this first for $\uA$ of the form $E\otimes_\ZZ \ol$, with the induced $\ol$-structure and polarization discussed in Example~\ref{ex:E tensor OL}. Since we allow $\ol$-linear combinations, it is enough to prove that endomorphisms with $\ell$-power degree generate a finite-index sublattice of $\End(E)$. While this statement follows of course from Theorem~\ref{thm: generation of orders}, it can also be easily derived from the earlier criterion of Bank et. al. (\S\ref{subsubsec:security}), and can be proven even more directly as follows.

Suppose $K$ is an imaginary quadratic field field in which $p$ is non-split and $\ell$ is split; note that this is just a congruence condition on the discriminant of $K$, so one can find infinitely many such fields. Let $\gerl$ be a prime of $K$ dividing $\ell$, and let $a$ be a positive integer such that $\gerl^{a}$ is a principal ideal with generator~$f$. Let $\tilde{E'}$ be an elliptic curve with CM by $\calO_{K}$, defined over a number field~$M$ and having good reduction at some prime above $p$. The reduction $E'$ by this prime then has CM by $\calO_{K}$ as well, so we can view $f$ as an element of $\End(E')$.
Then 
\[\deg(f) = N_{K/\QQ}(f) = \ell^{a},\]
and since $\Ker(f) \cong \calO_{K}/\gerl^{a}$ is cyclic of order $\ell^a$ we see that $f \not\in \ZZ$. Using that the graph $\scrG(p, \ell)$ is connected, we can find an $\ell$-power isogeny $h\colon E \arr E'$, and then $\phi:= h^\vee f h$ is an endomorphism of~$E$ with degree a power of $\ell$ and $\QQ(\phi) \cong \QQ(f)=K$, as $\phi$ has the same minimal polynomial as $\deg(h)f$. If we run a similar construction with two distinct fields $K_1,K_2$, we obtain endomorphisms $\phi_1$ and $\phi_2$ of $E$ that generate different quadratic subfields of $\End(E)\otimes \QQ$ and thus the lattice spanned by $\{1, \phi_1, \phi_2, \phi_1\phi_2\}$ (all elements of norm a power of $\ell$) has finite index in $\End(E)$.

We now use a similar argument to bootstrap from $B:=E\otimes \ol$ to any superspecial $\uA$. As the graph $\scrG_L(p, \ell)$ is connected, there is a path from $\uA$ to $\uB$ and this defines an isogeny $h\colon \uA \arr \uB$, well-defined up to $\End(\uB)^\times$. {There is also an isogeny $\tilde{h}\colon \uB \arr \uA$ with the property that $\tilde{h}\circ h=[\ell^r]$ for some $r$, obtained by selecting $r$ so that $\Ker(h)\leq \uA[\ell^r]$ and then taking the quotient of $\uB$ by $h(\uA[\ell^r])$.}
Consider the homomorphism of $\ol$-modules
\begin{equation} \label{eqn: 9793}
\End(\uB) \arr \End(\uA), \qquad f\mapsto \tilde{h}f h.
\end{equation}
We claim that $\deg_L(\tilde{h}f h)=\ell^{2r}\deg_L(f)$ for all $f\in\End(\uB)$. Indeed, using $\lambda_E$ to denote the principal polarization on $E\otimes \ol$ and $\lambda_A$ the one appearing in $\uA$, the diagram 
\begin{equation}
	\xymatrix{A^\vee \ar[d]_{\lambda_A^{-1}} & B^\vee \ar[l]_{ h^\vee}\ar@<0.3ex>[d]^{\lambda_B^{-1}}& B^\vee \ar[l]_{f^\vee}\ar[d]<0.3ex>^{\lambda_B^{-1}} & A^\vee\ar[l]_{\tilde h^\vee} \\
	A\ar[r]^h&B \ar[r]^f\ar@<0.3ex>[u]^{\lambda_B} &B \ar[r]^{\tilde h}\ar@<0.3ex>[u]^{\lambda_B}&A \ar[u]^{\lambda_A}}
\end{equation}
proves that 
\[\deg_L(\tilde hfh) = \deg_L(\tilde h)\deg_L(f) \deg_L(h).\]
Note that these are elements of $\ol$ and therefore commute.
A similar diagram without the middle square shows 
\[\deg_L(\tilde h)\deg_L(h)=\deg_L([\ell^r])=\ell^{2r}.\]
Thus we find $\deg_L(\tilde{h}f h)=\ell^{2r}\deg_L(f)$ as desired, so the image of (\ref{eqn: 9793}) is simply a dilation of $\End(\uB)$. We showed above that $\End(\uB)$ has a finite index sub-lattice generated by elements with $\deg_L(f)$ a power of $\ell$, and the image of this sub-lattice is finite index in $\End(\uA)$ and generated by elements with $\deg_L(f)$ a power of $\ell$.

\

\id Once again we find ourselves led to the question of whether a lattice can be generated by particular sets of elements, only in this case the lattices are now $\ol$-modules. For the rest of this article we turn our attention to this problem. 
The key result we are building towards is the following local-global principle for lattice cosets over totally real fields; see \S\ref{subsec: quad spaces lattices} for the definitions of any terms that have not yet been defined. The proof is completed in \S\ref{sec: thm proof}, and then we give applications in \S\ref{sec:inhomogenous theta functions}.

\begin{thm}\label{thm: coset local global}
	Let $n\geq 4$ be an even integer, let $\Lambda$ be an integral $\ol$-lattice of rank $n$ with positive-definite quadratic form $Q$, and let $\gerp$ be a prime ideal of $\ol$. Let $t\in\Lambda\setminus\gerp\Lambda$ and set $M:=t+\gerp\Lambda$. For any $\mu\in\ol$ coprime to $\disc \Lambda$ with $N_{L/\QQ}(\mu)$ sufficiently large, if for all places $v$ of $L$ there exists $x\in M_v:=t+(\gerp\Lambda\otimes_{\ol}\calO_{L,v})$ with $Q(x)=\mu$, then there exists $x\in M$ with $Q(x)=\mu$.
\end{thm}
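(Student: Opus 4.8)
The plan is to reduce Theorem~\ref{thm: coset local global} to a classical representation theorem for quadratic forms over $\QQ$ in sufficiently many variables by restricting scalars. First I would apply the functor $\mathrm{Res}_{L/\QQ}$: the $\ol$-lattice $\Lambda$, viewed as a $\ZZ$-lattice of rank $N := ng \geq 4g \geq 8$, carries the $\ZZ$-valued quadratic form $Q_{\ZZ} := \mathrm{Tr}_{L/\QQ}\circ\, Q$. Since $Q$ is totally positive-definite and $\gerd_L$-related scaling is harmless (we assumed $L$ has strict class number $1$, so $\delta_L$ exists and is totally positive), $Q_{\ZZ}$ is a positive-definite integral $\ZZ$-form; one checks that it is primitive and that its discriminant, up to a unit factor depending only on $d_L$, divides a power of $N_{L/\QQ}(\disc\Lambda)\cdot d_L$. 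The coset $M = t + \gerp\Lambda$ becomes a coset $M_{\ZZ}$ of the finite-index sublattice $\gerp\Lambda$ inside $\Lambda$ over $\ZZ$, and representing $\mu\in\ol$ by $Q$ on $M$ is equivalent to representing the single \emph{integer} $\mathrm{Tr}_{L/\QQ}(\mu)$ by $Q_{\ZZ}$ on $M_{\ZZ}$ subject to the additional constraint that the value actually lands in the sublattice $\mu + (\text{trace-zero part})$ — so a naive trace is not quite enough, and this is where care is needed.

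The honest way to handle the loss of information in taking traces is to work not with a single target integer but with a congruence: represent $\mu$ in $\Lambda/\germ\Lambda$ for a large auxiliary modulus $\germ$ and translate this into finitely many $\ZZ$-congruence conditions on the representing vector $x\in M_{\ZZ}$, then invoke an \emph{effective local-global principle with congruence conditions} for positive-definite $\ZZ$-forms in $N\geq 4$ variables — precisely the kind of strong-approximation / circle-method input (Sardari's theorem, or Hanke-type bounds as cited in \S\ref{subsec:connected and expansion}) that was used to prove Theorem~\ref{thm: special generators for a lattice}. Concretely, I would proceed as follows. First, translate the place-by-place hypothesis: for each finite $v$ of $L$, an $x\in M_v$ with $Q(x)=\mu$ gives, at each rational prime $r$ below $v$, a solvability statement for $Q_{\ZZ}$ over $\ZZ_r$ on the $r$-adic coset $M_{\ZZ}\otimes\ZZ_r$ with the right residue in $\Lambda/\germ\Lambda\otimes\ZZ_r$; the archimedean hypothesis, via positivity and the size assumption $N_{L/\QQ}(\mu)\gg 0$, gives real solvability (here one uses that the $g$ archimedean embeddings force $\mu$ itself to be totally positive and not too skew, which is automatic once $N_{L/\QQ}(\mu)$ is large and $\mu$ is locally represented). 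Second, feed $(Q_{\ZZ}, M_{\ZZ}, \mathrm{Tr}_{L/\QQ}(\mu), \germ)$ into the effective local-to-global theorem: it asserts that for the target integer sufficiently large relative to the level/discriminant — a condition guaranteed by $N_{L/\QQ}(\mu)\gg 0$ together with $\gcd(\mu,\disc\Lambda)=1$ to rule out anisotropy at bad primes — local solvability with congruences implies a genuine $\ZZ$-point. Third, unwind: the resulting $x\in M_{\ZZ}$ with $Q_{\ZZ}(x)=\mathrm{Tr}_{L/\QQ}(\mu)$ and the prescribed residue mod $\germ\Lambda$ has $Q(x)-\mu$ a trace-zero element of $\germ\Lambda$; choosing $\germ$ so that the only trace-zero element of $\germ\Lambda$ of bounded size (controlled, as always, by the total positivity and a Minkowski-type argument over $\ol$) in the relevant residue class is $0$ forces $Q(x)=\mu$ on the nose.

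I would organize the write-up in three lemmas: (i) a \textbf{restriction-of-scalars lemma} recording that $(\Lambda, Q_{\ZZ})$ is positive-definite, integral, primitive with controlled discriminant, and that cosets and congruences transfer faithfully; (ii) a \textbf{descent-of-the-target lemma} showing how to encode ``$Q(x)=\mu$ in $\ol$'' as ``$Q_{\ZZ}(x)=\mathrm{Tr}_{L/\QQ}(\mu)$ plus a congruence mod a suitable $\germ$'', with the key point that a trace-zero element of $\germ\Lambda$ that is small at every archimedean place must vanish; (iii) a \textbf{local hypotheses transfer lemma}, checking that the per-place hypothesis over $L$ yields per-prime solvability with congruences over $\QQ$, including the real place where positivity and $N_{L/\QQ}(\mu)\gg 0$ do the work. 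The conclusion is then immediate from the effective local-global theorem for $\ZZ$-forms in $\geq 4$ variables.

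The main obstacle, and the part I expect to require genuine work rather than bookkeeping, is lemma (ii): passing from the scalar invariant $\mathrm{Tr}_{L/\QQ}(\mu)$ back up to $\mu\in\ol$ itself. A single trace condition underdetermines an element of $\ol$ by a rank-$(g-1)$ trace-zero lattice, so one must impose enough congruences to pin down $Q(x)$ exactly, \emph{and simultaneously} keep the modulus $\germ$ coprime to $\disc\Lambda$ and small enough that the effective theorem's ``sufficiently large target'' threshold is not inflated beyond $N_{L/\QQ}(\mu)$. Balancing these — essentially a quantitative geometry-of-numbers estimate over $\ol$ bounding trace-zero vectors in $\germ\Lambda$ below a given height, combined with the explicit dependence of the Sardari/Hanke bound on the level — is the crux; everything else is standard restriction of scalars and a careful but routine translation of local conditions. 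A cleaner alternative worth trying, which would sidestep (ii) entirely, is to run the circle method / strong-approximation argument \emph{directly over $\ol$} as in the proof strategy of Theorems~\ref{thm: totally real local-global generators}, using inhomogeneous theta series over $L$ and the asymptotic growth of their Fourier coefficients (Shimura, Siegel-Weil, Deligne, as advertised in the introduction); but since the statement here is the local-global \emph{input} to those, restriction of scalars to the already-established $\QQ$-case is the more economical route for this particular theorem.
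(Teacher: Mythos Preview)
Your primary route via restriction of scalars has a real gap at precisely the step you flag as ``the crux.'' Encoding $Q(x)=\mu$ over $\ol$ as $Q_\ZZ(x)=\Tr_{L/\QQ}(\mu)$ plus a congruence modulo $\germ$ requires $\germ$ large enough that no nonzero trace-zero element of $\germ$ has all archimedean absolute values below $\Tr_{L/\QQ}(\mu)$, which forces $N_{L/\QQ}(\germ)$ to grow like a positive power of $\Tr_{L/\QQ}(\mu)$. But the hypothesis of the theorem controls only $N_{L/\QQ}(\mu)$, and for $g\geq 2$ the trace is not bounded by the norm: multiplying $\mu$ by the square of a totally positive unit of infinite order fixes $N_{L/\QQ}(\mu)$ while sending $\Tr_{L/\QQ}(\mu)$ to infinity. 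Even after normalizing to a balanced unit-associate of $\mu$ (replacing $M$ by $\varepsilon M$, which is harmless), the required congruence level still grows like a positive power of $N_{L/\QQ}(\mu)$, and you have not verified --- nor is it clear --- that any available effective local-global theorem over $\ZZ$ has target-versus-level dependence strong enough to close the loop. Structurally, restriction of scalars turns one quadratic equation over $\ol$ into $g$ simultaneous quadratic equations over $\ZZ$; your single trace equation plus a congruence is an attempt to recapture the missing $g-1$ equations through the back door, and that is exactly where the loss occurs.

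The ``cleaner alternative'' you set aside at the end is exactly the paper's proof, and there is no circularity. Theorem~\ref{thm: coset local global} is \emph{proved} in \S\ref{subsec: cuspidal part}--\S\ref{sec: thm proof} by decomposing $\theta_M=\theta_{\gerg(M)}+(\theta_M-\theta_{\gerg(M)})$, identifying $\theta_{\gerg(M)}$ with an Eisenstein series via Siegel--Weil, bounding its $\mu$-th Fourier coefficient below by $N_{L/\QQ}(\mu)^{(n-2)/2-\epsilon}$ using Hanke's local-density estimates (together with Lemma~\ref{lem: bound coset coeff by lattice coeff} to pass from the lattice genus to the coset genus), and bounding the cuspidal piece above by $N_{L/\QQ}(\mu)^{(n-2)/4+\epsilon}$ via the Deligne bound for Hilbert cusp forms of weight $n/2$. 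These estimates are native to $L$ and already phrased in $N_{L/\QQ}(\mu)$, so the skewness problem never arises; the parity hypothesis on $n$ is needed precisely so that $\theta_M$ has integral weight and the Deligne bound applies. Theorems~\ref{thm: totally real local-global generators} and~\ref{thm: End generation totally real} are then deduced \emph{from} Theorem~\ref{thm: coset local global}, not used as inputs to it.
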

\begin{rmk}
	The restriction that $n$ be even can almost certainly be removed, but justifying this would add significantly to the length of the paper and is not necessary for our main application. See Remark~\ref{rmk: why n even} for more details.
	
	This theorem (with the $n$ even restriction removed) is a consequence of Chan--Oh's \cite[Theorem 4.9]{CO}. Indeed, note that if $Q$ is anisotropic on $V_v$ then $v$ divides $\disc  Q$ \cite[Remark 3.8.1]{Hanke1}, and so if $\mu$ is coprime to $\disc \Lambda$ and is represented by $M_v$ then it must be primitively represented by $M_v$. However, the proof of \cite[Theorem 4.9]{CO} for the case $n=4$ depends critically on an unpublished result. 
With considerable effort, Theorem~\ref{thm: coset local global} can also be deduced from \cite{Shimura-inhomogeneous}.
\end{rmk}

\subsection{Lattices and cosets over totally real fields}\label{sec:lattices cosets} As a general reference to quadratic forms we use \cite{Omeara}, especially Part IV, although our terminology differs at times; see also \cite{Hanke1, Hanke2}. Let $L$ be a totally real field of degree $g$ over $\QQ$ and $\ol$ its ring of integers.  
Let $\sigma_1, \dots, \sigma_g\colon L \arr \RR$ denote its infinite places. For each finite place $v$ of $L$, we let $L_v$ denote the completion at $v$ and $\calO_{L,v}$ its valuation ring, i.e.~the completion of $\ol$ at $v$.

\subsubsection{Quadratic spaces and lattices}\label{subsec: quad spaces lattices}
By a  {\bf quadratic space}, we mean an $n$-dimensional vector space~$V$ over $L$, for some integer $n>0$, together with an $L$-bilinear symmetric form:
\[ (\cdot,\cdot)\colon V \times V \arr L.\]
To such a bilinear form we associate the quadratic form $Q(x) = (x, x)$ and so we have the relations 
\[ Q(x) = (x, x), \quad (x, y) = \frac{1}{2}(Q(x+y) - Q(x) - Q(y)).\]
One can therefore also define a quadratic space by means of the quadratic form $Q$, which is simply a function $Q\colon V \arr L$ such that the form $(\cdot,\cdot)$ thus defined is $L$-bilinear. We shall exclusively consider {\bf positive-definite} quadratic forms $Q$, meaning that under any embedding $\sigma_i\colon L \arr \RR$, the induced real-valued quadratic form is positive-definite. Therefore $Q:(V\setminus\{0\})\arr L$ takes values in the set $L^+$ comprising $x\in L$ such that $\sigma_i(x)>0$ for all embeddings $\sigma_i$. 

By a {\bf quadratic module} (or a {\bf lattice}) $\Lambda$ over $L$ we mean a finite rank $n\geq 1$ projective $\ol$-module, together with a function
\[ Q\colon \Lambda \arr L, \]
such that the associated function $(\cdot,\cdot)$ is a bilinear form. Then $V = \Lambda\otimes_\ol L$ is a quadratic space. Conversely, a finitely generated $\ol$-submodule $\Lambda$ of a quadratic space $V$, that contains a basis of $V$, is a lattice. If $Q$ takes values in $\ol$, we refer to $\Lambda$ as an {\bf integral lattice}.  By a procedure similar to the one for $\ZZ$-lattices, one can associate to $\Lambda$ a discriminant $\gerd_\Lambda$, which is a fractional ideal of $L$.

We will also define local and adelic versions of these structures. Let $\AA$ denote the adeles of~$L$. That is, \[\AA = \prod^\prime_{v} L_v,\] which is the restricted product over all places of~$L$ (for each element, all but finitely many components are in the valuation ring $\mathcal{O}_{L,v}$). Denote also $\AA_\gerf$ the restricted product over all finite places, and $\AA_\infty \cong L \otimes_\QQ \RR$ the product over the infinite places. Thus, $\AA = \AA_\gerf \times \AA_\infty$. Let $O_{Q}$ denote the orthogonal group of the quadratic form $Q$: that is, the $L$-linear automorphisms of $V$ that preserve $Q$. This is an algebraic group over $L$.

Given a finite place $v$ of $L$ and a lattice $\Lambda$,  we let $\Lambda_v:=\Lambda\otimes_\ol  \mathcal{O}_{L, v}$, which is a quadratic module in the quadratic space $V_v:= V\otimes_L L_v$ with quadratic form $Q\colon \Lambda_v  \arr L_v$. For finite places~$v$, we write $O_Q(\calO_v)$ to denote elements of $O_Q(L_v)$ that stabilize $\Lambda_v$. 

We can define adelic structures as restricted products of these local structures. First we have
$\Lambda_{\AA_\gerf} = \prod_{v< \infty}\Lambda_v$. We define $V_\AA$ and $V_{\AA_\gerf}$ as the restricted product of $V_v$ over all (resp.~all finite) places $v$, where all but finitely many components lie in $\Lambda_v$. Similarly, we define $O_Q(\AA)$ and $O_Q({\AA_\gerf})$ as the restricted product of $O_Q(L_v)$ over all (resp.~all finite) places $v$, where all but finitely many components lie in $O_Q(\calO_v)$.

\subsubsection{Lattice cosets and their genera}

See \cite{Shimura-inhomogeneous,KK,CO}.
Given an $\ol$-lattice $\Lambda$ in $V$ and a vector $t\in V$, we say that $M=\Lambda+t$ is a {\bf lattice coset} in $V$ with modulus $\Lambda$.
The {\bf conductor} $\gerc$ of $M$ is defined to be the ideal of elements $c\in \ol$ satisfying $cM\subseteq \Lambda$ (or equivalently $ct\in\Lambda$). Note that $M$ is a lattice if and only if $t\in\Lambda$ if and only if $\gerc=\ol$.

Given a finite place $v$ of $L$ and a lattice coset $M$ with modulus $\Lambda$, we let
\[ M_v:= \Lambda_v + t\subseteq V\otimes_L L_v;\]
such $M_v$ will be called a {\bf local lattice coset}. Note that $M_v=\Lambda_v$ at all finite places $v$ coprime to the condcutor of $M$. The lattice coset $M$ is uniquely determined by the collection of local lattice cosets $M_v$. Conversely, suppose we are given a collection of local lattice cosets $\Lambda^\prime_v + t^\prime_v$ for each finite~$v$, where $\Lambda^\prime_v$ is a lattice in $V_v$ and $t^\prime_v\in V_v$. If $\Lambda^\prime_v = \Lambda_v$ for almost all $v$, and there exists a positive integer $N$ such that $Nt^\prime_v\in \Lambda^\prime_v$ for all $v$, then there is a unique $\ol$-lattice coset $\Lambda^\prime +t$ in $V$ whose completions are equal to the $\Lambda^\prime_v + t^\prime_v$.
Thus, for any $\gamma= (\gamma_v)_v \in  O_Q(\AA_\gerf)$ there is a lattice coset in $V$, denoted $\gamma M$, such that $(\gamma M)_v = \gamma_v M_v$ for all $v$.

Given another lattice coset $M^\prime = \Lambda^\prime + t^\prime$, where $\Lambda^\prime$ is another $\ol$-lattice in the same quadratic space $V$, we say that $M$ is {\bf isomorphic} (or {\bf isometric}) to $M^\prime$ if $\gamma M =M^\prime$ for some $\gamma \in O_Q(L)$. Note that this is equivalent to the conditions $\gamma \Lambda  = \Lambda^\prime$ and $\gamma t- t^\prime \in \Lambda^\prime$. For a finite place~$v$ we say $M$ and $M^\prime$ are {\bf locally isomorphic} at~$v$ if $\gamma_v M_v = M_v^\prime$ for some $\gamma_v\in O_Q(L_v)$. We say that $M^\prime$ is in the {\bf genus} $\gerg = \gerg(M)$ of~$M$ if the two lattice cosets are locally isomorphic at every finite place, or equivalently if $\gamma M=M^\prime$ for some $\gamma\in O_Q(\AA_\gerf)$. We let $[\gerg] = [\gerg(M)]$ denote the set of isomorphism classes of lattice cosets within the genus. This is a finite set: see \cite[103:4 Theorem]{Omeara} for lattices, and the result for general lattice coset follows from this by a simple argument given below. We define the {\bf genus number} $h(\gerg)$ to be the cardinality of $[\gerg]$.

Let $M$ be a lattice coset with modulus $\Lambda$, and $\gamma = (\gamma_v)_v \in O_Q(\AA)$. Recall that, by the definition of $O_Q(\AA)$, $\gamma_v$ preserves $\Lambda_v$ for all but finitely many $v$. Let 
\[K^M_\AA := \{ \gamma \in O_Q(\AA): \gamma M = M\}\]
denote the {\bf adelic stabilzer} of $M$. Then we can write $K^M_\AA = \prod_v K^M_v$, where for $v$ infinite $K_v$ is the compact group $O_Q(L_v)$, and for $v$ finite $K_v$ is the compact open group $\{ \gamma_v \in O_Q(L_v): \gamma_v M_v = M_v\}$. Thus, $K_\AA^M$ is a compact open subgroup of $O_Q(\AA)$, and there is a bijection
\begin{align*}
	O_Q(\AA)/K^M_\AA &\longleftrightarrow \gerg(M),\\
	\gamma &\mapsto \gamma M.
\end{align*}
This induces a bijection
\begin{equation}\label{eqn: genus class number}
	O_Q(L)\backslash O_Q(\AA)/K^M_\AA \longleftrightarrow [\gerg(\Lambda)].
\end{equation}
For $M=\Lambda+t$ of conductor $\gerc$, observe that any automorphism of $\Lambda_v$ that acts as the identity on the finite group $\Lambda_v/\gerc \Lambda_v$ will stabilize $M_v$. Thus
\[ \{ \gamma \in K_\AA^\Lambda: \gamma_v \equiv Id_v \pmod{\gerc\Lambda_v}\text{ for all finite }v\} \subseteq K_\AA^M \subseteq K_\AA^\Lambda,\]
and since $\gerc\Lambda_v=\Lambda_v$ for all but finitely many places, the left-hand group is finite index in $K_\AA^\Lambda$. Hence the index of $K_\AA^M$ in $K_\AA^\Lambda$ is finite, showing that finiteness of the genus number of $M$ follows from finiteness of the genus number of the lattice $\Lambda$.
Note that one may also obtain finiteness of the genus number $h(\gerg(M))$ directly from Equation~(\ref{eqn: genus class number}) using general properties of reductive groups \cite[Theorems 5.1 \& 8.1]{PR}. 

\subsubsection{Adelic double cosets}

For a given genus $\gerg=\gerg(M)$, let $\scrB = \{ \gamma_1, \dots, \gamma_{h(\gerg)}\} \subset \calO_Q(\AA)$ be a finite set such that the lattice cosets $\{M_i: = \gamma_i M: 1\leq i \leq h(\gerg)\}$ are a set of representatives for the isomorphism classes of lattice cosets in $\gerg$. We let $\gamma_1 = Id$ so that $M_1 = M$, and without loss of generality we can take the archimedean components of each $\gamma_i$ to be the identity. The elements $\gamma_i$ give us an explicit parametrization of the double cosets in Equation~(\ref{eqn: genus class number}):
\begin{equation}\label{eqn: double coset decomp}
	O_Q(\AA)=\bigsqcup_{i=1}^{h(\gerg)} O_Q(L)\gamma_i K^M_\AA.
\end{equation}
These double cosets are not necessarily all the same size. More precisely, we consider the image of each double coset $O_Q(L)\gamma_i K^M_\AA$ in the compact quotient $O_Q(L)\backslash O_Q(\AA)$; this is the same as the image of $\gamma_i K^M_\AA$. Two elements $h,h'\in\gamma_i K^M_\AA$ have the same image in $O_Q(L)\backslash O_Q(\AA)$ if and only if $h'h^{-1}$ is in the finite group $O_Q(L)\cap \gamma_i K^M_\AA\gamma_i^{-1}$, which is the stabilizer $\Aut(M_i)\leq O_Q(L)$ of $M_i$. Thus, letting 
\[w_i:=\#(O_Q(L)\cap \gamma_i K^M_\AA\gamma_i^{-1})=\sharp \Aut(M_i)\]
denote the cardinality of this group, we see that the map from $\gamma_i K^M_\AA$ to $O_Q(L)\backslash O_Q(\AA)$ is $w_i$-to-one. 

Consider a Haar measure on $O_Q(\AA)$ normalized so that the compact open subgroup $K^M_\AA$ has measure $1$. For any $\gamma M\in \gerg(M)$ the adelic stabilizer of $\gamma M$ is $\gamma K^M_\AA \gamma^{-1}$, which also has measure $1$ becaues $O_Q(\AA)$ is unimodular (this follows for example from \cite[Lemma 5.5]{Borel} because $O_Q(L_v)$ is compact at all infinite $v$). Thus this choice of Haar measure depends only on the genus $\gerg$. 

The left coset $\gamma_i K^M_\AA$ also has measure $1$, so the induced measure of its image in $O_Q(L)\backslash O_Q(\AA)$ is $\frac{1}{w_i}$. 
As a consequence, we can compute the volume of $O_Q(L)\backslash O_Q(\AA)$ with respect to the induced Haar measure to be
\begin{equation}\label{eq:mass}
	m(\gerg):={\rm Vol}_\gerg(O_Q(L)\backslash O_Q(\AA))= \sum_{i=1}^{h(\gerg)}\frac{1}{w_i},
\end{equation}
where we write ${\rm Vol}_\gerg$ to emphasize that the choice of measure depends on $\gerg$. We call $m(\gerg)$ the {\bf mass} of the genus $\gerg$. Thus any fundamental domain for the action of $O_Q(L)$ on $O_Q(\AA)$ has volume $m(\gerg)$ with respect to this measure, and the intersection of the double coset $O_Q(L)\gamma_i K^M_\AA$ with any such fundamental domain has measure $\frac1{w_i}$.

\subsection{Hilbert modular forms and adelization}
In this section, we discuss some general facts about Hilbert modular forms and their adelic versions. For motivation, we begin in \S\ref{subsec: intro inhomogenous theta functions} by introducing the Hilbert modular forms that will be used for our main results and their relation to the theory of lattice cosets just discussed. The rest of the section is devoted to the general theory. We return to our specific focus on the theta functions associated to lattice cosets in \S\ref{sec:theta functions via theta corr}.

\subsubsection{Inhomogeneous theta functions of totally real lattice cosets}\label{subsec: intro inhomogenous theta functions}

See especially \cite{Shimura-inhomogeneous}. The setup is nearly identical to the homogeneous case (that is, the case $M$ is a lattice); for this case see for example \cite{Eichler2, KK, Walling1}, though mind the different normalizations across these references.

Let $\gerH$ denote the complex upper half plane. As above we let $L$ be a totally real field of degree $g$ over $\QQ$, and $\Lambda$ a rank $n$ quadratic module over $\ol$. To a lattice coset $M$ with modulus $\Lambda$ we can associate a {\bf theta function} $\theta_M:\gerH^g\to \CC$, defined for $\tau:=(\tau_1, \dots, \tau_g) \in \gerH^g$ by
\begin{equation}\label{eq:theta coset}
	\theta_{M}(\tau): = \sum_{x\in M} e^{2\pi i \Tr Q(x)\cdot\tau}
\end{equation}
where we write $\Tr\mu\cdot\tau:=\sigma_1(\mu)\tau_1+\cdots+\sigma_g(\mu)\tau_g$ for $\mu\in L$. We define the {\bf representation numbers} for the lattice coset $M$ and $\mu\in L$ by
\[ r_M(\mu) := \sharp \{ x \in M: Q(x) = \mu\}.\]
Note that $r_M(0)$ is $1$ if $M=\Lambda$ is a lattice and $0$ otherwise. Recalling that $Q$ is positive-definite, so $Q(x)\in L^+$ for all nonzero $x\in V$, we can rewrite the theta function as
\begin{equation} \theta_M(\tau) = r_M(0) + \sum_{\mu\in L^+} r_M(\mu)e^{2\pi i\Tr\mu\cdot \tau}. 
\end{equation}
Although the summation ranges over $L^+$, the set of values $Q(M)$ acquired by $Q$ on $M$ is in fact contained in some fractional ideal in $L$.
A fundamental fact is that $\theta_M$ is a {\bf Hilbert modular form} in the sense we shall define below. It has weight $n/2$, 
and one has a precise description of the level and character; see \cite[Theorem 3.7]{Walling1} for the case $M=\Lambda$.  Shimura \cite[Theorem A3.8]{Shimura-arithmeticity} deals with a general case, but a more accessible reference might be \cite[Theorem p.154]{Garrett} though the level it gives may not be optimal.

\begin{exa} Let $L$ be the lattice $R \otimes \ol$, where $R$ is a maximal order in the quaternion algebra $B_{p, \infty}$. We discussed this  above in Example~\ref{ex:E tensor OL} and on page~\pageref{further discussion}. The quadratic form is simply the norm and its discriminant is $p^2\ol$. We choose an integer $N\geq 1$ and a vector $t \in R \otimes \ol$. We let $M$ be the lattice coset $t + N R \otimes \ol$. If we define the ideal $\gerf =4pN\gerd_L$, then $\gerf$ satisfies the five conditions appearing in \cite[p. 153]{Garrett}: (i) holds because $\gerf \subseteq 4\gerd_L$, (iii) amounts to $\gerf \subseteq N\ol$, (v) holds because $\gerf \subseteq p\ol$, and (ii) and (iv) hold vacuously.  Then \cite[Theorem, p. 154]{Garrett} gives that $\Theta_M$ is a Hilbert modular form of parallel weight $2$ and level $\Gamma(\gerf) = \{ \gamma\in \SL_2(\ol): \gamma \equiv I_2 \pmod{\gerf}\}$, a priori with a character, but the comments in loc. cit. following the Theorem show that this character is trivial in our example.
\end{exa}

\medskip

\id
For a genus $\gerg=\gerg(M)$, we define the {\bf theta function of the genus},
\begin{equation}\label{eq:theta genus}
	\theta_\gerg(\tau) = \frac{1}{m(\gerg)} \sum_{i = 1}^{h(\gerg)} \frac{1}{w_i} \theta_{M_i}(\tau)
\end{equation}
where $w_i$ is the order of the stabilizer of $M_i$ in $O_Q(L)$, and the mass $m(\gerg)$ of $\gerg$ is as defined in Eq.~(\ref{eq:mass}).
Its Fourier coefficients are averaged representation numbers $\frac{1}{m(\gerg)} \sum_{i = 1}^{h(\gerg)} \frac{1}{w_i}r_{M_i}(\mu)$, with each $\theta_{M_i}$ weighted by the measure of the double coset $O_Q(L)\gamma_i K^M_\AA$ in $O_Q(L)\backslash O_Q(\AA)$. 
The key points that we will need are that:

\medskip
\begin{enumerate}
\item  $\Theta_\gerg$ is an \textit{Eisenstein series} on $\gerH^g$ of weight $n/2$ for a suitable discrete subgroup of $\SL_2(L)$;
\item For every $i$, $\theta_{M_i}  - \theta_\gerg$ is a \textit{cusp form}. That is, $\theta_\gerg$ is the Eisenstein component of \textit{every} $\theta_{M_i}$.
\end{enumerate}

\medskip
\id So if we can show that the Fourier coefficients of the Eisenstein series grow faster than the corresponding coefficients of the cusp forms, we can use this to show that $r_M(\mu)>0$ for certain values of $\mu$. We will explain these points in \S\ref{subsec: eisenstein} and \S\ref{subsec: cuspidal part} respectively, after we have given an adelic description of the theta functions involved.

\subsubsection{Adelic and classical modular functions}\label{subsec: adelic classical}

Following \cite{Garrett, Gelbart, KL}, we discuss the connection between classical Hilbert modular forms and adelic Hilbert automorphic forms. We restrict to the case of parallel weight $(w, \dots, w)$ as this suffices for the applications in this paper, and we additionally assume that the totally real field $L$ is not $\QQ$. 
Recall that $\sigma_1,\ldots,\sigma_g$ denote the embeddings $L\to\RR$.

\medskip

\id
On the classical side, define an {\bf automorphy factor} 
\[j(\alpha,\tau):=\prod_{i=1}^g (c_i\tau_i+d_i),\qquad \tau=(\tau_1,\ldots,\tau_g)\in\gerH^g,\qquad \alpha=\left(\begin{psmallmatrix}
	a_i&b_i\\c_i&d_i
\end{psmallmatrix}\right)_{i=1,\ldots,g}\in\SL_2(\RR)^g,\]
which satisfies
\[ j(\alpha_1 \alpha_2, \tau) = j(\alpha_1, \alpha_2\tau) \cdot j(\alpha_2, \tau).\]
Let $i_\infty\colon\SL_2(L) \injects \SL_2(\RR)^g$ be the embedding induced by the $g$ embeddings $\sigma_i\colon L\to\RR$. For $\gamma\in\SL_2(L)$ we also write $j(\gamma,\tau)$ to denote $j(i_\infty(\gamma),\tau)$.
Using this, we define the {\bf slash operator}~$\vert_w \gamma$ for $w\in\ZZ$ and $\gamma \in \SL_2(L)$ as an operator on continuous functions $f\colon \gerH^g \arr \CC$ by 
\[ (f\vert_w \gamma) (\tau) := j(\gamma, \tau)^{-w} f(\gamma \tau).\]
This defines a right action of $\SL_2(L)$ on such functions, namely $((f\vert_w\gamma_1)\vert_w\gamma_2) = f\vert_w (\gamma_1\gamma_2)$.
Let~$\Gamma$ be a congruence subgroup of $\SL_2(L)$. We  say a continuous function $f\colon\gerH^g\to \CC$ is {\bf modular} of weight $w$ and level $\Gamma$ if $f\vert_w\gamma = f$ for all $\gamma \in \Gamma$. If $f$ is modular of weight $w$ and level~$\Gamma$ then $f\vert_w\gamma$ is modular of weight $w$ and level $\gamma^{-1}\Gamma \gamma$. If in addition $f$ is holomorphic then we call $f$ a {\bf Hilbert modular form} and then all $f\vert_w\gamma$ will be Hilbert modular forms as well.

\medskip

\id
On the adelic side, we consider complex-valued functions on $\SL_2(\AA)$. We may view $\SL_2(\RR)^g$ as a subgroup of $\SL_2(\AA)$ by sending $\alpha = (\alpha_1,\ldots,\alpha_g)$ to the element that is $I_2$ at each finite place and $\alpha_i$ at the infinite place corresponding to $\sigma_i$, an element we shall also denote $\epsilon_\infty(\alpha)$, if confusion may arise. Within this subgroup we have the compact subgroup ${\rm SO}_2(\RR)^g$, and we can write an element of this group as 
\[ k(\theta) = \left(\left(\begin{smallmatrix}
	\hspace{9.5pt}\cos\; \theta_i & \sin\; \theta_i \\ -\sin\; \theta_i & \cos\; \theta_i
\end{smallmatrix}\right)\right)_{i=1,\ldots,g},\qquad \theta=(\theta_1,\ldots,\theta_g)\in\RR^g.\]
Let $\KK$ be a compact open subgroup of $\SL_2(\AA_\gerf)$. We say $F\colon \SL_2(\AA)\to \CC$ is an {\bf adelic automorphic function} of weight $w$ and level $\KK$ if it satisfies 
\[F(\gamma \g k(\theta) r)=e^{iw(\theta_1+\cdots+\theta_g)}F(\g)\]
for $\gamma\in \SL_2(L)$ (here embedded diagonally in $\SL_2(\AA)$), $\g\in\SL_2(\RR)^g$, $k(\theta)\in {\rm SO}_2(\RR)^g$, and $r\in \KK$; in  words,  $F$ is invariant under left multiplication by $\SL_2(L)$, invariant under right multiplication by $\KK$, and equivariant under right multiplication by ${\rm SO}_2(\RR)^g$ with character $k(\theta)\mapsto  e^{iw\sum\theta_i} = j(k(\theta), i)^{-w}$.

\medskip

\id
We now establish a one-to-one correspondence between modular functions $f\colon\gerH^g\to\CC$ and adelic automorphic functions. First, given a modular function $f$ of weight $w$ and level $\Gamma$, we describe how to elevate it to an adelic automorphic function 
\[f^\sharp\colon\SL_2(\AA)\to \CC\]
of weight $w$ and an appropriate choice of level $\KK$. We call $f^\sharp$ the {\bf adelization} of $f$.

We first define $f^\sharp$ on the subgroup $\SL_2(\RR)^g$ of $\SL_2(\AA)$
by
\[ f^\sharp(\g) = j(\g, i)^{-w} f(\g(i)), \qquad \g \in \SL_2(\RR)^g.\]
The modular property of $f$ translates into left-$\Gamma$-invariance of $f^\sharp$, and we have the relation
\[ f^\sharp(\gamma\cdot \g\cdot  k(\theta)) = e^{iw (\theta_1+\cdots+\theta_g)} f^\sharp (\g), \qquad \gamma\in \Gamma, \quad \g\in \SL_2(\RR)^g,\quad k(\theta)\in {\rm SO}_2(\RR)^g.\] 
A consequence of strong approximation for $\SL_2(L)$ \cite[Proposition, p. 88]{Garrett} implies that the inclusion $\epsilon_\infty\colon\SL_2(\RR)^g\to\SL_2(\AA)$ induces a homeomorphism
\begin{align*}
	\Gamma\backslash \SL_2(\RR)^g  &\overset{\cong}{\longrightarrow}  \SL_2(L) \backslash \SL_2(\AA) /\KK,\\
	\Gamma \g &\mapsto  \SL_2(L) \g \KK,
\end{align*}
(more pedantically, $\Gamma \g \mapsto  \SL_2(L) \epsilon_\infty(\g) \KK$)
where $\KK := \prod_{v<\infty} \Gamma_v$ for $\Gamma_v$ the $v$-adic completion of~$\Gamma$; this is equal to $\SL_2(\calO_{L_v})$ at all but finitely many places $v$.
Thus, any left-$\Gamma$-invariant function~$\varphi$ on $\SL_2(\RR)^g$ extends uniquely to a function on $\SL_2(\AA)$ that is left-$\SL_2(L)$-invariant and right-$\KK$-invariant. Explicitly, we can write any $t\in \SL_2(\AA)$ as $t = \gamma \g k$ for some $\gamma \in \SL_2(L)$, $\g \in \SL_2(\RR)^g$, and $k\in \KK$, and then the assignment $\varphi(t) := \varphi(\g)$ gives a well-defined extension of $\varphi$ to all of $\SL_2(\AA)$.
Thus we see that $f^\sharp$, a priori defined only on $\SL_2(\RR)^g$, extends uniquely to an adelic automorphic function $f^\sharp$ on $\SL_2(\AA)$.

Conversely, given an adelic automorphic function $F$ of some level $\KK$ and weight $w$, we produce a modular function 
\[F^\flat\colon \gerH^g \arr \CC\]
of level $\Gamma := \SL_2(L) \cap (\KK \times \SL_2(\RR)^g)$ and weight $w$.  We call $F^\flat$ the {\bf unadelization} of $F$. To compute the value of $F^\flat$ at $\tau=x+iy = (x_1+iy_1,\ldots,x_g+iy_g)\in\gerH^g$, we define the element 
\begin{equation}
\label{eqn: Mtau}
\alpha_\tau\in\SL_2(\RR)^g \subset \SL_2(\AA),
\end{equation} 
which equals $\begin{psmallmatrix} y_i^{\sfrac 12} & x_iy_i^{-\sfrac 12} \\ 0 & y_i^{-\sfrac 12} \end{psmallmatrix} $ at the infinite place corresponding to the embedding $\sigma_i$, and is $I_2$ at every finite place. Note that $\alpha_{x+iy}= \alpha_{x+i}\alpha_{iy}$ and that $\alpha_\tau i=\tau$. Then we define
\begin{equation}
	F^\flat(\tau) := j(\alpha_\tau, i)^w F(\alpha_\tau) = \left(\prod_{i=1}^g y_i\right)^{-w/2}F(\alpha_\tau).
\end{equation}
It is straightforward to check that $(F^\flat)^\sharp=F$: note that every element of $\SL_2(\AA)$ is contained in $\SL_2(L)\cdot \alpha_\tau\cdot {\rm SO}_2(\RR)^g\cdot\KK$ for some $\tau\in\gerH^g$, so one merely needs to check that the functions agree on all elements of the form $\alpha_\tau$ and satisfy the same transformation properties with respect to $\SL_2(L)$, $\KK$, and ${\rm SO}_2(\RR)^g$. Conversely, if $f$ is a modular function, then $(f^\sharp)^\flat=f$. This establishes a one-to-one correspondence between modular continuous functions and adelic automorphic functions.

To characterize those $F$ for which $F^\flat$ is holomorphic is a rather delicate issue and we refer the reader to \cite{Garrett}. For the particular choices of $F$ we use, holomorphicity of $F^\flat$ will be evident from the Fourier expansions that we obtain.

We now address how adelization interacts with the slash operator. It is easy to check from the definitions that for $\gamma\in \SL_2(L) \overset{i_\infty}{\injects} \SL_2(\RR)^g \subset \SL_2(\AA)$ one has 
\begin{equation}\label{eqn: slash} (f\vert_w \gamma)^\sharp (\alpha) = f^\sharp(  i_\infty(\gamma) \alpha), \qquad \gamma\in \SL_2(L), \; \alpha\in \SL_2(\RR)^g.\end{equation}
Indeed, 
\begin{multline*}
 f^\sharp(i_\infty(\gamma)\alpha) = j(i_\infty( \gamma)\alpha, i)^{-w} f(\gamma \alpha (i)) = j(\alpha, i)^{-w}j(\gamma, \alpha(i))^{-w} f(\gamma\alpha (i)) \\= j(\alpha, i)^{-w} (f\vert_w \gamma)(\alpha(i)) = (f\vert_w\gamma)^\sharp (\alpha).
 \end{multline*}

\subsubsection{Adelic and classical Fourier coefficients}\label{subsec: adelic classical fourier}

One of the key features of (holomorphic) Hilbert modular forms is that they have a Fourier expansion. If $\Gamma\subset \SL_2(L)$ is a congruence subgroup, then there exists a fractional $\calO_L$-ideal $\gera$ in $L$ such that 
\[U_\gera:= \left\{ \begin{psmallmatrix} 1 & a \\ 0 & 1
\end{psmallmatrix}: a\in \gera\right\} \subset \Gamma.\]
Given any two such ideals $\gera,\gerb$ we have $U_{\gera+\gerb}=U_\gera U_\gerb\subset\Gamma$, and therefore there exists one such $\gera$ that contains all the others; this fractional $\calO_L$-ideal $\gera$ is called the {\bf translation ideal} of the cusp $i\infty$ relative to $\Gamma$. 

If $f$ is a Hilbert modular form of level $\Gamma$ and $\gera$ is the translation ideal of the cusp $i\infty$ relative to $\Gamma$, then $f$ is invariant under $U_\gera$, and thus $f$ has a classical Fourier expansion at the cusp $i\infty = (i\infty, \dots, i\infty)$,
\[ f(z) =  \sum_{\mu\in L} c_\mu(f) e^{2\pi i \;\Tr \mu\cdot z}, \]
for uniquely determined complex numbers $c_\mu(f)$. These complex numbers, the {\bf classical Fourier coefficients} of $f$ at the cusp $i\infty$, are defined by
\[c_\mu(f)={\rm vol}((L\otimes\RR)/\gera)^{-1} \int_{x\in (L\otimes \RR)/\gera} f(x+iy)e^{-2\pi i\Tr\mu\cdot(x+iy)}\,d_\infty x\qquad\text{for }y\in\RR_{>0}^g;\]
when $f$ is holomorphic this integral does not depend on $y$ \cite[\S 1.4]{Garrett}. Here $d_\infty x$ is the standard measure on $L\otimes \RR\simeq\RR^g$ such that ${\rm vol}(\RR^g/\ZZ^g) = 1$, and the volume of $(L\otimes\RR)/\gera$ is computed with respect to this measure. Let $\gera^\vee$ the $\ZZ$-dual of $\gera$ relative to the trace form; that is, $\gera^\vee = (\gerd_L \gera)^{-1}$. Then we have $c_\mu(f)=0$ for all $\mu\notin \gera^\vee$, and so the Fourier expansion could instead be written as a sum over $\gera^\vee$. In fact, as $L \neq \QQ$, we have additionally that $c_\mu = 0$ unless $\mu = 0$ or $\sigma_i(\mu) > 0$ for all  $i = 1, \dots, g$ (the {\bf Koecher principle}). 

The $q$-expansion of $f$ at other cusps can be defined as the $q$-expansion of $f\vert_w \gamma$ at the cusp $i\infty$ for $\gamma \in \SL_2(L)$. Since $f\vert_w\gamma$ has level $\gamma^{-1}\Gamma\gamma$, these Fourier coefficients are computed using the translation ideal of the cusp $i\infty$ associated to $\gamma^{-1}\Gamma\gamma$. (This gives a $q$-expansion of $f$ at the cusp $\gamma^{-1} i\infty$, but note that even if $\gamma_1^{-1}i\infty=\gamma_2^{-1}i\infty$, the $q$-expansions of $f\vert_w \gamma_1$ and of $f\vert_w \gamma_2$ may not agree, as the elements $\gamma_1,\gamma_2$ may parametrize the neighborhood of the cusp in different ways.) In particular, for every $\mu \in L$ we obtain a function \[c_\mu(\cdot, f)\colon \SL_2(L) \arr \CC,\]  defined for $\gamma\in\SL_2(L)$ as the coefficient $c_\mu(f\vert_w\gamma)$ in the Fourier expansion of $f\vert_w \gamma$ at the cusp $i\infty$. (Note in particular that $c_\mu(f) = c_\mu(I_2, f)$). We say a Hilbert modular form $f$ is a {\bf cusp form} if $c_0(\gamma,f)=0$ for all $\gamma\in\SL_2(L)$.

\medskip

\id On the adelic side, we first define an {\bf adelic exponential} function
\[e_\AA:\AA\to \CC^\times,\] which is a product of local exponential functions, 
$e_\AA((x_v)_v)=\prod_v e_v(x_v)$. For $v$ infinite and $x_v\in \RR$ we let $e_v(x_v)=e^{2\pi i x_v}$. For $v$ finite and $x_v\in L_v$, let $p$ be the rational prime that $v$ lies over, so that $\Tr_{L_v/\QQ_p}(x_v)\in\QQ_p$. The map $\ZZ[\frac 1p]\to\QQ_p$ induces an isomorphism $\ZZ[\frac 1p]/\ZZ\to \QQ_p/\ZZ_p$; thus there exists $z_v\in\ZZ[\frac 1p]$, unique up to translation by $\ZZ$, satisfying $\Tr_{L_v/\QQ_p}(x_v)\in z_v+\ZZ_p$. We define $e_v(x_v)=e^{-2\pi i z_v}$. The resulting adelic function $e_\AA$ evaluates to $1$ on $L$ and so defines a nontrivial character on the compact abelian group $\AA/L$. From its definition we can show that $e_\AA(\widehat{\gerd_L^{-1}}) = 1$, where for a fractional ideal $\gera$ of $L$, 
\[\hat{\gera} = \prod_{v<\infty} \gera\otimes \calO_{L, v}\]
 denotes the adelization of $\gera$.

Let $F$ be a function on $\SL_2(\AA)$ that is left-$\SL_2(L)$-invariant. For $\mu\in L$, define a function $\calW_\mu\colon \SL_2(\AA) \arr \CC$ -- the $\mu^{\rm th}$ {\bf adelic Fourier coefficient} of $F$ -- by
\[ \calW_\mu(\g) = \int_{\AA/L}  F(u(x) \g)e_\AA(\mu x)^{-1} d_\AA x,  \]
where $u(x) = \left( \begin{smallmatrix} 1 & x \\ 0 & 1
\end{smallmatrix}\right)$ and the measure $d_\AA x$ is normalized as to give $\AA/L$ volume $1$.
Note that if $F$ is right-$\KK$-invariant then so is the function $\calW_\mu$.

\medskip

\id One has the following properties of adelic Fourier expansion (\cite[p. 74]{Garrett}):
\begin{enumerate}
	\item We have $\calW_\mu(u(x)\g) = e_\AA(\mu x) \calW_\mu(\g)$, $x\in \AA$.
	\item As $L^2$ functions, $F(\g) = \sum_{\mu\in L} \calW_\mu(\g)$.
	\item For $M =  \left(\begin{smallmatrix} a & b \\ 0 & d \end{smallmatrix}\right)\in \SL_2(L)$ one has $\calW_\mu (\g) = \calW_{ad^{-1}\mu}(M\g)$.
\end{enumerate}
The function $\calW_0:\SL_2(\AA)\to\CC$ is called the {\bf constant term} of $F$. An adelic automorphic function $F$ is called {\bf cuspidal} 
if its constant term $\calW_0$ is~$0$ almost everywhere.

\medskip 
\id Given a classical holomorphic Hilbert modular form $f$, we compare the adelic Fourier coefficients of $F = f^\sharp$, that are functions $\calW_\mu( \cdot, f^\sharp)\colon \SL_2(\AA) \arr \CC$, with the Fourier coefficients of $f$ that are functions $c_\mu(\cdot, f) \colon \SL_2(L) \arr \CC$. Let $\KK$ be the level of $f^\sharp$. Given $\gamma\in\SL_2(L)$, we write $\gamma_\gerf\in\SL_2(\AA)$ for the element that equals $\gamma$ at all finite places and $I_2$ at all infinite places (that is, the image of $\gamma$ under the embedding of $\SL_2(L)$ into $\SL_2(\AA_\gerf)$, where $\SL_2(\AA_\gerf)$ is viewed as a subgroup of $\SL_2(\AA)$).

\begin{prop}\label{prop : adelic to classical coefficient}
	Let $f$ be a Hilbert modular form. Let $\gamma\in\SL_2(L)$, $\alpha\in\SL_2(\RR)^g$, and $r\in\KK$. Then for all $\mu\in L$ we have
	\[\calW_\mu(\gamma_\gerf^{-1}\alpha r, f^\sharp)=j(\alpha,i)^{-w}e^{2\pi i\Tr\mu\cdot \alpha(i)}c_\mu(\gamma, f).\]
\end{prop}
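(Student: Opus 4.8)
The plan is to unwind the definitions of both sides and reduce the identity to the elementary transformation property (\ref{eqn: slash}) relating the slash operator to left translation on the adelic group, together with the relation $\alpha_{x+iy}=\alpha_{x+i}\alpha_{iy}$ introduced just after (\ref{eqn: Mtau}). First I would observe that, since $f^\sharp$ is right-$\KK$-invariant, so is each adelic Fourier coefficient $\calW_\mu(\cdot,f^\sharp)$ (this was noted in the text), so the factor $r\in\KK$ on the left-hand side may be dropped immediately: it suffices to prove $\calW_\mu(\gamma_\gerf^{-1}\alpha,f^\sharp)=j(\alpha,i)^{-w}e^{2\pi i\Tr\mu\cdot\alpha(i)}c_\mu(\gamma,f)$. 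Next, using property (1) of the adelic Fourier expansion and the fact that $\gamma_\gerf^{-1}$ and $\alpha$ live at disjoint sets of places (finite versus infinite), one writes $\gamma_\gerf^{-1}=\gamma^{-1}\cdot\gamma_\infty$, where $\gamma_\infty=i_\infty(\gamma)\in\SL_2(\RR)^g$ is the archimedean part; since $\gamma^{-1}\in\SL_2(L)$ is diagonal in $\SL_2(\AA)$ and $F=f^\sharp$ is left-$\SL_2(L)$-invariant, one can move $\gamma^{-1}$ past the integral defining $\calW_\mu$ using property (3) with $M=\gamma^{-1}$ — but more cleanly, I would instead use (\ref{eqn: slash}): $f^\sharp(i_\infty(\gamma)\beta)=(f\vert_w\gamma)^\sharp(\beta)$ for $\beta\in\SL_2(\RR)^g$, which shows that $\calW_\mu(\gamma_\gerf^{-1}\alpha,f^\sharp)=\calW_\mu(\alpha,(f\vert_w\gamma)^\sharp)$ after also accounting for how $u(x)$ interacts with these elements.

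Having made that reduction, the core computation is to evaluate $\calW_\mu(\alpha,g^\sharp)$ for an arbitrary holomorphic Hilbert modular form $g$ (here $g=f\vert_w\gamma$) and $\alpha\in\SL_2(\RR)^g$, and check it equals $j(\alpha,i)^{-w}e^{2\pi i\Tr\mu\cdot\alpha(i)}c_\mu(g)$. Write $\alpha(i)=\tau=x_0+iy_0\in\gerH^g$; then $\alpha=\alpha_\tau k(\theta)$ for some $k(\theta)\in\mathrm{SO}_2(\RR)^g$, and by the weight-$w$ equivariance of $g^\sharp$ one reduces to $\alpha=\alpha_\tau$, picking up the factor $j(k(\theta),i)^{-w}$ which is absorbed into $j(\alpha,i)^{-w}$ via the cocycle relation for $j$. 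Now I expand the defining integral
\[\calW_\mu(\alpha_\tau,g^\sharp)=\int_{\AA/L}g^\sharp(u(r)\alpha_\tau)\,e_\AA(\mu r)^{-1}\,d_\AA r.\]
The key point is strong approximation: $\AA=L+(\AA_\gerf\text{-part of }\KK\text{-translates})+\RR^g$ in a suitable sense, but concretely one uses that $u(r)\alpha_\tau$ can, for $r$ in a fundamental domain, be written as $\gamma'\cdot\epsilon_\infty(\beta)\cdot k'$ with $\gamma'\in\SL_2(L)$, $\beta\in\SL_2(\RR)^g$, $k'\in\KK$; for $r$ in the archimedean part $r\in L\otimes\RR$ this is simply $u(r)\alpha_\tau=\alpha_{(x_0+r)+iy_0}k(\theta')$, so $g^\sharp(u(r)\alpha_\tau)=(\prod y_{0,i})^{-w/2}g((x_0+r)+iy_0)$ times a unitary factor. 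The integral over $\AA/L$ collapses: the finite part contributes the volume normalization (total mass $1$, compatible with the translation ideal $\gera$ and $e_\AA(\widehat{\gerd_L^{-1}})=1$), and the archimedean part becomes precisely the classical Fourier-coefficient integral $\int_{(L\otimes\RR)/\gera}g(x+iy_0)e^{-2\pi i\Tr\mu\cdot x}d_\infty x$ up to the factor $e^{-2\pi i\Tr\mu\cdot(iy_0)}\cdot e^{2\pi i\Tr\mu\cdot\tau}$, yielding $j(\alpha_\tau,i)^w\,e^{2\pi i\Tr\mu\cdot\tau}c_\mu(g)$ after accounting for $j(\alpha_\tau,i)=(\prod y_{0,i})^{-1/2}$.

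I expect the main obstacle to be bookkeeping the measure and volume normalizations consistently: reconciling the adelic normalization that gives $\AA/L$ volume $1$ with the classical normalization ${\rm vol}(\RR^g/\ZZ^g)=1$ and the translation ideal $\gera$ of the cusp. The cleanest route is to first treat the case $\mu\in\gera^\vee$ (where both sides are manifestly nonzero in general) and verify the vanishing for $\mu\notin\gera^\vee$ separately on both sides — on the adelic side from $e_\AA(\widehat{\gerd_L^{-1}})=1$ forcing the finite integral to vanish, on the classical side from the standard fact $c_\mu(g)=0$ for $\mu\notin\gera^\vee$. A secondary subtlety is verifying that the decomposition $u(r)\alpha_\tau=\gamma'\epsilon_\infty(\beta)k'$ is available for $r$ ranging over a set of representatives of $\AA/L$ meeting every class, which is exactly the content of strong approximation for $\SL_2(L)$ cited from \cite{Garrett}; once that is in hand the computation is forced. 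Finally I would double-check the edge compatibility $\calW_\mu(\cdot)=c_\mu(I_2,\cdot)$ noted in the text as a sanity check on constants.
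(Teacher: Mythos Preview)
Your approach is correct and is essentially the paper's argument, just reorganized: the paper carries $\gamma$ through the whole computation, while you reduce upfront to the case $\gamma=I_2$ by replacing $f$ with $g=f\vert_w\gamma$. Both routes land on the same core step --- split $\AA/L\simeq((L\otimes\RR)/\gera)\times\hat\gera$ with $\gera$ the translation ideal of $\gamma^{-1}\Gamma\gamma$, use that $u(\xi)$ lies in the level for $\xi\in\hat\gera$ to collapse the finite part of the integral, and identify the archimedean integral with the classical Fourier coefficient, handling $\mu\notin\gera^\vee$ separately via vanishing of the finite character integral.

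Two places in your write-up need tightening. First, the reduction $\calW_\mu(\gamma_\gerf^{-1}\alpha,f^\sharp)=\calW_\mu(\alpha,(f\vert_w\gamma)^\sharp)$ does not follow from (\ref{eqn: slash}) alone, since that identity is stated only for archimedean arguments and $u(x)$ has finite components that do not commute with $\gamma_\gerf^{-1}$. What you actually need is the global identity $(f\vert_w\gamma)^\sharp(t)=f^\sharp(t\,\gamma_\gerf^{-1})$ for \emph{all} $t\in\SL_2(\AA)$; this is easy to verify directly (check left-$\SL_2(L)$-invariance, right-$(\gamma_\gerf^{-1}\KK\gamma_\gerf)$-invariance, ${\rm SO}_2(\RR)^g$-equivariance, and agreement on $\SL_2(\RR)^g$ via (\ref{eqn: slash})), and then your reduction is immediate since $\alpha$ and $\gamma_\gerf^{-1}$ commute. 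Second, invoking strong approximation to decompose $u(r)\alpha_\tau$ is heavier than needed: for $g=f\vert_w\gamma$ with level $\KK'=\gamma_\gerf^{-1}\KK\gamma_\gerf$, one simply observes that $u(\xi)\in\KK'$ for $\xi\in\hat\gera$ (this is the definition of the translation ideal), so $g^\sharp(u(x_\infty)u(\xi)\alpha_\tau)=g^\sharp(u(x_\infty)\alpha_\tau)$ by right-$\KK'$-invariance. This is exactly the paper's key observation $\gamma_\gerf U_{\hat\gera}\gamma_\gerf^{-1}\subset\KK$, conjugated over to $g$.
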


Before the proof, we mention the special case
\begin{equation}\label{eqn: adelic to classical coefficient}
	c_\mu(I_2,f)=e^{2\pi\Tr\mu}\calW_\mu(I_2, f^\sharp),
\end{equation}
as well as the following corollary. Recall that by strong approximation, every element of $\SL_2(\AA)$ can be written in the form $\gamma_\gerf^{-1}\alpha r$ as in the statement of the proposition, meaning that every adelic Fourier coefficient of $f^\sharp$ can be written in terms of some classical Fourier coefficient. We thus obtain the following.

\begin{cor}\label{cor: cusp classical and adelic} A classical Hilbert modular form $f$ is a cusp form if and only if its adelization $f^\sharp$ is a cuspidal automorphic function. 
\end{cor}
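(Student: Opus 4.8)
The plan is to show both directions of the equivalence in Corollary~\ref{cor: cusp classical and adelic} directly from Proposition~\ref{prop : adelic to classical coefficient}, since that proposition translates every classical Fourier coefficient $c_\mu(\gamma,f)$ into an adelic Fourier coefficient $\calW_\mu$ evaluated at a specific point, and vice versa (using strong approximation). The key observation is that $f$ being a cusp form means $c_0(\gamma,f)=0$ for all $\gamma\in\SL_2(L)$, while $f^\sharp$ being cuspidal means its constant term $\calW_0$ vanishes almost everywhere; so the content of the corollary is that these two vanishing conditions match up under adelization.

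First I would prove the forward direction: assume $f$ is a cusp form. By strong approximation for $\SL_2(L)$ (as used in \S\ref{subsec: adelic classical}), every $\g\in\SL_2(\AA)$ can be written as $\gamma_\gerf^{-1}\alpha r$ for some $\gamma\in\SL_2(L)$, $\alpha\in\SL_2(\RR)^g$, and $r\in\KK$, where $\KK$ is the level of $f^\sharp$. Applying Proposition~\ref{prop : adelic to classical coefficient} with $\mu=0$ gives
\[ \calW_0(\gamma_\gerf^{-1}\alpha r, f^\sharp) = j(\alpha,i)^{-w}\,c_0(\gamma, f) = 0, \]
since $c_0(\gamma,f)=0$ by the cusp form hypothesis. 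As this holds for every $\g\in\SL_2(\AA)$, the constant term $\calW_0$ of $f^\sharp$ is identically zero, hence in particular zero almost everywhere, so $f^\sharp$ is cuspidal.

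For the converse, assume $f^\sharp$ is cuspidal, so $\calW_0(f^\sharp)$ vanishes almost everywhere. I would note that $\calW_0(\cdot, f^\sharp)$ is right-$\KK$-invariant (as remarked after the definition of $\calW_\mu$) and is continuous — this continuity follows because $f$ is a (holomorphic, hence continuous) Hilbert modular form, so $f^\sharp$ is continuous, and the defining integral of $\calW_0$ over the compact group $\AA/L$ depends continuously on $\g$. A continuous function vanishing almost everywhere vanishes identically; alternatively, one can argue on the level of the quotient $\SL_2(L)\backslash\SL_2(\AA)/\KK$, which by the homeomorphism in \S\ref{subsec: adelic classical} is identified with the manifold $\Gamma\backslash\SL_2(\RR)^g$, so "almost everywhere'' with respect to Haar measure forces vanishing on a dense set, and continuity finishes the job. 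Thus $\calW_0(\g, f^\sharp)=0$ for all $\g$. Now given any $\gamma\in\SL_2(L)$, applying Proposition~\ref{prop : adelic to classical coefficient} with $\mu=0$, $\alpha=I_2$, $r=I_2$ yields $c_0(\gamma,f) = e^{0}\calW_0(\gamma_\gerf^{-1}, f^\sharp) = 0$, so $f$ is a cusp form.

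The main obstacle is the measure-theoretic subtlety in the converse direction: "$\calW_0$ is $0$ almost everywhere'' must be upgraded to "$\calW_0$ is identically $0$'' before Proposition~\ref{prop : adelic to classical coefficient} can be invoked at the specific points $\gamma_\gerf^{-1}$. This requires the continuity of $\calW_0$, which in turn rests on the continuity of $f^\sharp$ and dominated convergence for the integral over the compact group $\AA/L$; none of this is deep, but it is the one place where some care is genuinely needed rather than a bookkeeping translation through the proposition. Everything else is a direct substitution into the already-established dictionary between classical and adelic Fourier coefficients.
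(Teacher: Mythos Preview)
Your proof is correct and follows essentially the same approach as the paper: both deduce the corollary directly from Proposition~\ref{prop : adelic to classical coefficient} together with strong approximation, which lets every element of $\SL_2(\AA)$ be written in the form $\gamma_\gerf^{-1}\alpha r$. Your treatment is in fact more careful than the paper's one-line deduction, since you explicitly handle the measure-theoretic subtlety in the converse direction (upgrading ``$\calW_0=0$ almost everywhere'' to ``identically zero'' via continuity), a point the paper glosses over.
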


\begin{proof}[Proof of Proposition~\ref{prop : adelic to classical coefficient}]

We follow \cite[Proposition 12.3]{KL}, which proves this result for $L=\QQ$. Let $\Gamma$ be the level of $f$, so that $\gamma^{-1}\Gamma\gamma$ is the level of $f\vert_w \gamma$, and let $\gera$ be the translation ideal of the cusp $i\infty$ relative to $\gamma^{-1}\Gamma\gamma$. Then $U_\gera$ is contained $\gamma^{-1}\Gamma\gamma$ and so $\gamma U_\gera \gamma^{-1}\subset \Gamma$. Taking the product of $v$-adic completions for all finite $v$ we obtain $\gamma_\gerf U_{\hat\gera} \gamma_\gerf^{-1}\subset \KK$. So for any $x\in \AA$ and $\xi\in\hat\gera$ we have
\[f^\sharp(u(x)u(\xi)\gamma_\gerf^{-1}\alpha) = f^\sharp(u(x)\gamma_\gerf^{-1}\gamma_\gerf u(\xi)\gamma_\gerf^{-1}\alpha) = f^\sharp(u(x)\gamma_\gerf^{-1}\alpha),\]
since $\gamma_\gerf u(\xi)\gamma_\gerf^{-1}\in\KK$ commutes with $\alpha\in\SL_2(\RR)^g$ and $f^\sharp$ is right-$\KK$-invariant.

By strong approximation we have $\AA=L((L\otimes\RR)\times \hat{\gera})$, and so $\AA/L \simeq ((L\otimes\RR)\times \hat{\gera})/\gera$. 
From this we obtain an isomorphism of measure spaces (but not of groups),
\[\AA/L\simeq ((L\otimes\RR)/\gera) \times \hat{\gera};\]
see \cite[Theorem 7.40]{KL}.
Thus if we let $d_\gerf \xi$ denotes the Haar measure on $\AA_\gerf$ with the property that $d_\infty \times d_\gerf =d_\AA$, we can write
\begin{align*}
	\calW_\mu(\gamma_\gerf^{-1}\alpha r) &= \int_{\AA/L}  f^\sharp(u(x) \gamma_\gerf^{-1}\alpha r)e_\AA(\mu x)^{-1}\; d_\AA x\\
	&= \int_{(L\otimes\RR)/\gera}\int_{\hat{\gera}}  f^\sharp(u(x)u(\xi)\gamma_\gerf^{-1}\alpha)e_\AA(\mu \xi)^{-1}e_\AA(\mu x)^{-1} \;d_\gerf \xi \;d_\infty x\\
	&= \int_{(L\otimes\RR)/\gera}f^\sharp(u(x)\gamma_\gerf^{-1}\alpha)e_\AA(\mu x)^{-1} \;d_\infty x \cdot \int_{\hat{\gera}}  e_\AA(\mu \xi)^{-1}\;d_\gerf \xi.
\end{align*}
If $\mu\notin\gera^\vee$ then the integral over $\hat{\gera}$ is $0$; recalling that we have $c_\mu(\gamma,f)=0$ for $\mu\notin\gera^\vee$, we obtain the desired identity $0=0$. So we now consider the case $\mu\in\gera^\vee$. In this case we have $\mu\xi\in\widehat{\gerd}_L^{-1}$ for all $\xi\in\hat\gera$, so that $e_\AA(\mu\xi)=1$. We can conclude that
\begin{align*}
	\calW_\mu(\gamma_\gerf^{-1}\alpha r) = \vol(\hat{\gera})\int_{(L\otimes\RR)/\gera}f^\sharp(\gamma_\gerf^{-1}u(x)\alpha)e_\AA(\mu x)^{-1} \;d_\infty x.
	\end{align*}
Here we also used the fact that $\gamma_\gerf$ and $u(x)$ commute as they are supported on disjoint sets of places.
Writing $z:=\alpha(i)$ we have by definition of adelization
\begin{align*}
	f^\sharp(\gamma_\gerf^{-1}u(x)\alpha)&=f^\sharp(\gamma^{-1}i_\infty(\gamma)u(x)\alpha)\\
	&=j(i_\infty(\gamma)u(x)\alpha,i)^{-w}f(i_\infty(\gamma)u(x)z)\\
	&=j(\alpha,i)^{-w}j(u(x),z)^{-w}j(i_\infty(\gamma),z+x)^{-w}f(i_\infty(\gamma)(z+x))\\
	&=j(\alpha,i)^{-w}(f\vert_w \gamma)(z+x).
\end{align*}
Thus
\begin{align*}
	\calW_\mu(\gamma_\gerf^{-1}\alpha r) &= j(\alpha,i)^{-w}\vol(\hat{\gera})\int_{(L\otimes\RR)/\gera}(f\vert_w \gamma)(z+x)e^{-2\pi i\Tr\mu\cdot x}\;d_\infty x\\
	&= j(\alpha,i)^{-w}e^{2\pi i\Tr\mu\cdot z}\vol(\hat{\gera})\int_{(L\otimes\RR)/\gera}(f\vert_w \gamma)(z+x)e^{-2\pi i\Tr\mu\cdot (z+x)}\;d_\infty x\\
	&=j(\alpha,i)^{-w}e^{2\pi i\Tr\mu\cdot z}\vol(\hat{\gera})\vol((L\otimes \RR)/\gera)c_\mu(\gamma, f).
\end{align*}
Finally note that
\[\vol(\hat\gera)\vol((L\otimes\RR)/\gera)=\int_{(L\otimes\RR)/\gera}\int_{\hat\gera} d_\gerf \xi \;d_\infty x=\int_{\AA/L}d_\AA=1.\]
\end{proof}

\

\id
We remark that for a complete theory, and in particular for introducing Hecke operators into the picture, it is necessary to work with $\GL_2(\AA)^+$ and that introduces  complications; see \cite{Garrett}. For our applications this will not be necessary. We also remark that working with non-parallel weight does not introduce substantial difficulties, but  we avoid that as  we will have no need for that and loc. cit. restricts to parallel weight.

\subsubsection{The Deligne bound}\label{subsec:deligne}

To obtain an asymptotic on the growth of Fourier coefficients, we will need to have an upper bound on the growth of Fourier coefficients of a cuspidal Hilbert modular form of weight $w\geq 0$. Recall that for $L = \QQ$, and a cusp form $f = \sum_{n\in \QQ} a_n q^n$, $q = e^{2\pi i nz}$ on some congruence subgroup of $\SL_2(\QQ)$, one has the {\bf trivial bound} ({\bf Hecke bound})
\[ \vert a_n\vert \leq \Omega \cdot n^{w/2} , \]
where $\Omega$ is a constant depending only on $\Gamma$ and $w$. For a general modular form (not necessarily a cusp form) one has 
\[ \vert a_n\vert \leq \Omega\cdot   n^{w-1}\]
for some possibly larger constant $\Omega$. Note that for modular forms of weight $2$ the two bounds coincide. However, for cusp forms one has the much deeper {\bf Deligne bound}, for every $\epsilon > 0$
\[ \vert a_n \vert  \leq O( n^{(w-1)/2 + \epsilon}).\]
Similary, for Hilbert modular cusp forms $f(z) = \sum_{\mu \in L} c_\mu e^{2\pi i \Tr \mu\cdot z}$ of weight $w$ and level given by a congruence subgroup $\Gamma$, we have the trivial bound \cite[p. 24]{Garrett}
\[ \vert c_\mu \vert \leq \Omega\cdot  N_{L/\QQ}(\mu)^{w/2}.\] 
However, we will need the stronger Deligne bound. Unfortunately, this bound is proven \cite{Blasius, BL, Livne} (see also \cite{Shahidi}) in the language of automorphic representations and so some translation of concepts is needed. Due to limitation of space, we will omit this\footnote{In a nutshell, one deduces the general case from the case of cuspidal eigenforms, and those are treated through the associated $L$-series. The associated $L$ series have an Euler product and the control over the factors comes from constructing the associated global Galois representation in a way that provides information on the eigenvalues of Frobenii; typically, one finds the representations in this case in cohomology of algebraic varieties.} and use the following bound.
\begin{thm}\label{thm:deligne}
	For a Hilbert modular cusp form $f(z) = \sum_{\mu \in L} c_\mu e^{2\pi i \Tr \mu\cdot z}$ of weight $w$ and level $\Gamma$, a congruence subgroup, for every $\epsilon > 0$, we have the bound 
	\[ \vert c_\mu \vert \leq \Omega\cdot  N_{L/\QQ}(\mu)^{(w-1)/2+ \epsilon}.\] 
\end{thm}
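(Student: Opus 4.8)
The plan is to reduce to the case of a single normalized Hecke eigenform and then invoke the Ramanujan--Petersson bound, which is the arithmetic core of the statement. First I would replace $\Gamma$ by a smaller congruence subgroup, of the form $\Gamma_1(\gern)$ for a suitable level ideal $\gern$ (or pass to the adelic $\GL_2$ picture), so that the finite-dimensional space of parallel weight $w$ cusp forms containing $f$ is acted on by a commuting family of Hecke operators $T_\gerp$, $\gerp\nmid\gern$. By the spectral decomposition of this space, $f$ is a finite $\CC$-linear combination of forms obtained from normalized cuspidal eigenforms $g$ of level dividing $\gern$ via the degeneracy (oldform) maps; these maps alter the Fourier coefficients only by bounded scalars and by shifts $\mu\mapsto\mu\varpi$, which affect only the implied constant. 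Hence it suffices to bound $|c_\mu(g)|$ for a normalized eigenform $g$; the general bound then follows by the triangle inequality, absorbing everything into $\Omega$. The bound at cusps other than $i\infty$ is handled identically, since each $f|_w\gamma$ is again a cusp form of conjugate level.

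For a normalized eigenform $g$ with eigenvalues $T_\gerp g=a_\gerp g$ and nebentypus a finite-order Hecke character $\chi$, the coefficients $c_\mu(g)$ are, up to $\chi$ (of absolute value $1$) and up to the behaviour at the primes dividing $\gern$, multiplicative in the integral ideal $(\mu)$ and satisfy at good primes $c_{\varpi_\gerp}(g)=a_\gerp c_1(g)$ and the recursion $a_{\gerp^{k+1}}=a_\gerp a_{\gerp^k}-\chi(\gerp)N\gerp^{\,w-1}a_{\gerp^{k-1}}$, where $N\gerp$ denotes the absolute norm. Thus the entire question collapses to estimating $|a_\gerp|$ for each prime $\gerp$, and at good primes to the single estimate
\[ |a_\gerp|\ \leq\ 2\,N\gerp^{\,(w-1)/2}\qquad(\gerp\nmid\gern), \]
with some weaker bound $|a_\gerp|\leq C\,N\gerp^{\,(w-1)/2}$ at the finitely many ramified places.

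This displayed inequality is exactly the content of the Ramanujan conjecture for Hilbert modular forms of parallel weight $w\geq 2$, and proving it is the only genuine obstacle: there is no elementary route. The argument I would cite attaches to $g$ a compatible system of $\lambda$-adic Galois representations $\rho_{g,\lambda}\colon\Gal(\overline L/L)\to\GL_2(\overline\QQ_\lambda)$ realized in the (intersection) cohomology of an appropriate quaternionic Shimura variety --- constructed directly when a local sign condition holds, and in general by Taylor's congruence argument reducing to that case (see \cite{Blasius, BL, Livne}, and \cite{Shahidi} for the statement in full generality). Deligne's proof of the Weil conjectures then forces the Frobenius eigenvalues at $\gerp$ to be pure of weight $w-1$, i.e.\ of archimedean absolute value $N\gerp^{\,(w-1)/2}$ under every embedding, which is precisely the bound above; at the bad primes the local component of the automorphic representation is a twist of Steinberg, a ramified principal series, or supercuspidal, and the required eigenvalue bound is read off from the local Langlands classification. (For $w=1$ one instead invokes the Deligne--Serre type results available in the Hilbert setting, but in our application $w=n/2\geq 2$ and this case never arises.)

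Finally, feeding these estimates back through multiplicativity: for $(\mu)=\prod_\gerp\gerp^{e_\gerp}$ the purity bound gives $|a_{\gerp^{e}}|\leq(e+1)\,N\gerp^{\,e(w-1)/2}$ (the two characteristic roots having equal absolute value), whence
\[ |c_\mu(g)|\ \leq\ |c_1(g)|\cdot d\big((\mu)\big)\cdot N_{L/\QQ}(\mu)^{(w-1)/2}, \]
where $d(\gera)$ is the number of integral ideals dividing $\gera$; since $d(\gera)=O_\epsilon\!\big(N_{L/\QQ}(\gera)^{\epsilon}\big)$ uniformly in $\gera$, this is $\ll_\epsilon N_{L/\QQ}(\mu)^{(w-1)/2+\epsilon}$. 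Summing over the finitely many eigenforms occurring in the decomposition of $f$ and over the boundedly many cusps, and absorbing all constants into $\Omega$, yields the claimed bound. The main difficulty, to repeat, is entirely concentrated in the third paragraph: the Ramanujan bound for Hilbert modular forms rests on the geometric construction of the associated Galois representations together with Deligne's theorem, and this is why it is quoted here rather than reproved.
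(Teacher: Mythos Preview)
Your proposal is correct and follows the same outline the paper itself sketches. The paper does not actually prove this theorem: it states the bound, cites \cite{Blasius, BL, Livne} and \cite{Shahidi}, and in a footnote says only that one reduces to cuspidal eigenforms, uses the Euler product of the associated $L$-series, and controls the local factors via the Frobenius eigenvalues of the attached Galois representations found in the cohomology of algebraic varieties. Your write-up expands exactly this sketch---spectral decomposition to eigenforms, multiplicativity reducing to $|a_\gerp|\le 2N\gerp^{(w-1)/2}$, Ramanujan--Petersson via the Galois representations of Blasius--Rogawski/Brylinski--Labesse/Taylor together with Deligne's purity, and the divisor bound to absorb the $(e+1)$ factors into $N_{L/\QQ}(\mu)^\epsilon$---so there is nothing to correct or contrast.
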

\begin{rmk} In fact, we will only use this bound for $w \geq 2$ and it would suffice for us that $\epsilon < \sfrac{1}{2}$. In particular, the case $\epsilon = 1/5$ proven by Shahidi will suffice.
\end{rmk}

\subsubsection{The theta correspondence}\label{sec: theta correspondence} In this section and the following, we discuss a recipe for turning functions on the adelic group $O_Q(\AA)$ into classical Hilbert modular forms. We will then identify those functions on $O_Q(\AA)$ that give rise to inhomogeneous theta series, Eisenstein series, and cusp forms.

The first ingredient in this recipe is the Weil representation, a special case of which we consider here. We write as before $\AA$ for the adeles of $L$. Let $\scrS(V_\AA)$ denote the space of Schwartz functions on the group $V_\AA$.\footnote{We will also introduce Schwartz functions on $\SL_2(\AA)$ and $O_Q(\AA)$. We do not give full definitions of these functions; for our purposes, it is enough that the reader trusts that the Schwartz functions form a space of functions on which all the manipulations we do are well defined. See \cite[\S 3.1]{Bump} for details.} The orthogonal group $O(\AA)$ acts on $\scrS(V_\AA)$ by change of variable, $(\h\cdot\phi)(x)=\phi(\h^{-1} x)$.

The {\bf Weil representation} of $\SL_2(\AA)$ is a representation acting on $\scrS(V_\AA)$,
\[ \rho\colon \SL_2(\AA) \arr \GL(\scrS(V_\AA)).\]
The construction of the Weil representation $\rho$ is involved, but once granted its existence, it is uniquely determined by the specifying it on matrices that generate $\SL_2(\AA)$: 
\begin{align}\label{eq:weilrep on diagonal}
	\left(\rho\left(\begin{psmallmatrix}
		a&0\\0&a^{-1}
	\end{psmallmatrix}\right)\phi\right)(v)&=(a,(-1)^{n/2}\det(Q))_L|a|_\AA^{n/2}\phi(av),\\
	\label{eq:weilrep on unipotent}
	\left(\rho\left(\begin{psmallmatrix}
	1&x\\0&1
\end{psmallmatrix}\right)\phi\right)(v)&=e_\AA(xQ(v))\phi(v),\\
\nonumber 
\left(\rho\left(\begin{psmallmatrix}
	0&1\\-1&0
\end{psmallmatrix}\right)\phi\right)(v)&=\hat{\phi}(-v),
\end{align}
where $(\cdot,\cdot)_L$ denotes the Hilbert symbol (the product of local Hilbert symbols $(\cdot,\cdot)_{L_v}$), $|a|_\AA$ is the adelic absolute value (product of all local absolute values), $e_\AA$ is the adelic exponential defined in \S\ref{subsec: adelic classical fourier}, and $\hat{\phi}$ denotes an adelic Fourier transform (we will not need its explicit description in this article; see \cite{Hanke2} for details).
The Weil representation extends multiplicatively to a representation of $\SL_2(\AA) \times O_Q(\AA)$ on $\scrS(V_\AA)$, where $\h\in O_Q(\AA)$ acts by change of variable $(\rho(\h)\phi)(x)=\phi(\h^{-1}x)$ as above; a crucial fact we make use of is that the actions of $\SL_2(\AA)$ and $O_Q(\AA)$ commute. See \cite[p. 156]{Hanke2}\footnote{Note a minor inaccuracy in the reference: $\rho$ is not defined modulo ${\rm Sp}_2(F)$ as is written there, but the action of ${\rm Sp}_2(F)$ on the theta kernel introduced below is indeed trivial.}, \cite{Prasad}, and \cite{Garrett} for a more complete treatment.

Using the Weil representation we can define the adelic {\bf theta kernel}, which is a function
\[ \theta\colon \scrS(V_\AA) \times \SL_2(\AA) \times O_Q(\AA) \arr\CC, \]
taking a triple $(\phi, \g, \h)$ to 
\[ \theta_\phi(\g, \h) = \sum_{\x\in V} (\rho(\g, \h) \phi)(\x) = \sum_{\x\in V} (\rho(\g) \phi)(\h^{-1}\x).\]
An important fact about the theta kernel is that it is invariant under left multiplication by $\SL_2(L)$: if $\gamma\in\SL_2(L)$, then $\theta_\phi(\gamma \g,\h)=\theta_\phi(\g,\h)$.
Using the theta kernel one defines the {\bf theta correspondence} $\Theta$ that takes a Schwartz function $\Psi$ on $O_Q(\AA)$ to a Schwartz function $\Theta(\Psi)$ on $\SL_2(\AA)$ defined by
\[ \Theta(\Psi)(\g):= \int_{O_Q(L) \backslash O_Q(\AA)} \Psi(\h) \theta_\phi(\g, \h) d\h, \qquad \g\in \SL_2(\AA),\]
where $d\h$ is a Haar measure on $O_Q(\AA)$.

\subsection{Theta functions via the theta correspondence}\label{sec:theta functions via theta corr}

Fix a lattice coset $M=\Lambda+t$ in $V$. Let 
\[\phi:=\prod_v\phi_v\]
be the Schwartz function on $V_\AA$ such that $\phi_v$ is the characteristic function of $M_v$ at each finite place $v$, and $\phi_\sigma(x) = e^{-2\pi \sigma(Q(x))} $ at infinite places $\sigma\colon L\to\RR$. Note that for $\h \in O_Q(\AA)$ we have $\h\cdot \phi_M=\phi_{\h M}$. In addition, we fix the Haar measure $d\h$ on $O_Q(\AA)$ so that the adelic stabilizer $K_\AA^M$ of $M$ has measure $1$.

Recall that $\gamma_1=1,\gamma_2,\ldots,\gamma_{v(\gerg)}\in\SL_2(\AA)$ are elements supported at the finite places such that $M_i:=\gamma_iM$ form a set of representatives for the genus of $M$. For each $1\leq i\leq h(\gerg)$, let $\Psi_i$ be the function on $O_Q(\AA)$ such that $\Psi_i(\h)=1$ if $\h M\simeq M_i$  and $\Psi_i(\h)=0$ otherwise; thus $\Psi_i$ is the characteristic function of the double coset $O_Q(L)\gamma_i K^M_\AA$ in $O_Q(\AA)$ as in Equation~(\ref{eqn: double coset decomp}).
Consequently, the sum of all the $\Psi_i$ is the constant function $\mathbb{1}$ on $O_Q(\AA)$. 
\begin{lem}\label{lem:theta series from theta lift}
	We have $\Theta(\Psi_i)^\flat=\frac1{w_i}\theta_{M_i}$ and $\Theta(\mathbb{1})^\flat=m(\gerg)\theta_{\gerg}$.
\end{lem}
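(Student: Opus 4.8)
The plan is to unwind the definitions of the theta correspondence and the theta kernel, and then perform the two standard reductions: (i) replace the integral over $O_Q(L)\backslash O_Q(\AA)$ against the characteristic function $\Psi_i$ by an integral over the double coset $O_Q(L)\gamma_i K^M_\AA$, and (ii) evaluate this using the parametrization of the double coset by $\Aut(M_i)\backslash\left(\gamma_i K^M_\AA\right)$, which contributes the factor $\tfrac{1}{w_i}$ coming from $w_i=\sharp\Aut(M_i)$ and our normalization $\mathrm{Vol}(K^M_\AA)=1$. Concretely, for $\g=\alpha_\tau\in\SL_2(\RR)^g$ one computes
\[
\Theta(\Psi_i)(\alpha_\tau)=\int_{O_Q(L)\backslash O_Q(\AA)}\Psi_i(\h)\,\theta_\phi(\alpha_\tau,\h)\,d\h
=\frac{1}{w_i}\int_{\gamma_i K^M_\AA}\theta_\phi(\alpha_\tau,\h)\,d\h,
\]
where the last equality uses that $\Psi_i$ is supported on the single double coset and that the quotient map $\gamma_iK^M_\AA\to O_Q(L)\backslash O_Q(\AA)$ is $w_i$-to-one (exactly as recorded in the discussion preceding Equation~(\ref{eq:mass})).

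Next I would compute $\theta_\phi(\alpha_\tau,\h)$ for $\h=\gamma_i k$ with $k\in K^M_\AA$. Since the $\SL_2(\AA)$- and $O_Q(\AA)$-actions in the Weil representation commute and $\phi$ is fixed by $K^M_\AA$ (the finite part of $\phi$ is the characteristic function of $\prod_v M_v$, which $K^M_\AA$ stabilizes by definition), we get $\rho(\h)\phi=\phi_{\h M}=\phi_{\gamma_i M}=\phi_{M_i}$, independent of $k$. Therefore the integrand is constant on $\gamma_i K^M_\AA$, equal to $\theta_\phi(\alpha_\tau,\gamma_i)=\sum_{\x\in V}\bigl(\rho(\alpha_\tau)\phi_{M_i}\bigr)(\x)$, and the integral over $\gamma_i K^M_\AA$ (which has measure $1$) simply returns this value. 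It then remains to unwind $\rho(\alpha_\tau)$ using the explicit formulas (\ref{eq:weilrep on diagonal})--(\ref{eq:weilrep on unipotent}) together with the Gaussian archimedean component $\phi_\sigma(x)=e^{-2\pi\sigma(Q(x))}$: writing $\alpha_\tau$ in terms of the generating matrices and using that the finite part of $\phi_{M_i}$ is the characteristic function of $M_i$ and contributes $e_\AA\bigl(\sum x_j Q(\x)\bigr)=1$ at the finite places for $\x\in M_i$, one recovers $\bigl(\rho(\alpha_\tau)\phi_{M_i}\bigr)(\x)=\left(\prod_j y_j^{1/2}\right)^{n/2}e^{2\pi i\,\Tr Q(\x)\cdot\tau}$ for $\x\in M_i$ and $0$ otherwise — this is the standard computation relating the Weil representation to the classical theta series, and its form is consistent with the definition (\ref{eq:theta coset}) and the automorphy factor $j(\alpha_\tau,i)^{w}=\left(\prod_j y_j\right)^{-w/2}$ with $w=n/2$. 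Hence $\Theta(\Psi_i)(\alpha_\tau)=\tfrac{1}{w_i}j(\alpha_\tau,i)^{-n/2}\theta_{M_i}(\tau)$, and applying unadelization $(\cdot)^\flat$ (which multiplies by $j(\alpha_\tau,i)^{n/2}$) gives $\Theta(\Psi_i)^\flat=\tfrac{1}{w_i}\theta_{M_i}$ as claimed.

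For the second identity, I would sum over $i$: since $\sum_{i=1}^{h(\gerg)}\Psi_i=\mathbb{1}$ on $O_Q(\AA)$ by the double coset decomposition (\ref{eqn: double coset decomp}), linearity of the theta correspondence in the Schwartz function $\Psi$ gives $\Theta(\mathbb{1})=\sum_i\Theta(\Psi_i)$, hence $\Theta(\mathbb{1})^\flat=\sum_{i=1}^{h(\gerg)}\tfrac{1}{w_i}\theta_{M_i}=m(\gerg)\,\theta_\gerg$ directly from the definition (\ref{eq:theta genus}) of $\theta_\gerg$ and of $m(\gerg)$ in (\ref{eq:mass}).

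The main obstacle is the bookkeeping in the archimedean Weil-representation computation: one must carefully track the Hilbert-symbol and absolute-value factors in (\ref{eq:weilrep on diagonal}), confirm that they are trivial for the relevant $\alpha_\tau$ (whose entries are positive reals at the infinite places), and verify the normalization of the Gaussian so that the power of $\prod_j y_j$ and the exponential $e^{2\pi i\Tr Q(\x)\cdot\tau}$ come out matching (\ref{eq:theta coset}) exactly, with no spurious constant. Everything else — the support argument, the $w_i$-to-one counting, and the measure normalizations — is a direct consequence of the setup already established in \S\ref{sec:lattices cosets}.
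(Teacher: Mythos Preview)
Your proposal is correct and follows essentially the same route as the paper's proof: the double-coset support reduction, the $w_i$-to-one quotient count yielding the $\tfrac{1}{w_i}$ factor, the observation that $\rho(k)\phi=\phi$ for $k\in K^M_\AA$ so the integrand is constant on $\gamma_i K^M_\AA$, the archimedean Weil-representation computation via (\ref{eq:weilrep on diagonal})--(\ref{eq:weilrep on unipotent}), and finally the linearity argument for $\mathbb{1}=\sum_i\Psi_i$. One small wording slip: your remark that the finite places ``contribute $e_\AA(\sum x_jQ(\x))=1$'' is unnecessary, since $\alpha_\tau$ is the identity at all finite places and hence $\rho(\alpha_\tau)$ acts trivially there --- the finite part of $\phi_{M_i}$ simply restricts the sum to $\x\in M_i$ with no exponential factor.
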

\id (Here $\theta_{M_i}$ and $\theta_\gerg$ are as defined in Eqs. (\ref{eq:theta coset}) and (\ref{eq:theta genus}), respectively.)
\begin{proof}
	We follow \cite[Section 4.5]{Hanke2}, which performs this computation in the case $M$ is a lattice. We decompose $O_Q(L)\backslash O_Q(\AA)$ into double cosets via Equation~(\ref{eqn: double coset decomp}). If $\h\in O_Q(L)\gamma_j K_\AA^M$, then we have $\Psi_i(\h)=1$ if $i=j$ and $\Psi_i(\h)=0$ otherwise, so 
	\begin{align*}
		\Theta(\Psi_i)(\g)&=\int_{O_Q(L)\backslash O_Q(\AA)} \Psi_i(\h)\theta_\phi(\g,\h)\,d\h\\
		&=\int_{O_Q(L)\backslash (O_Q(L)\gamma_i K_\AA^M)}\theta_\phi(\g,\h)\,d\h.
		\intertext{Recalling that the map from $\gamma_i K_\AA^M$ to its image in $O_Q(L)\backslash O_Q(\AA)$ is $w_i$-to-one,}
		\Theta(\Psi_i)(\g)&=\frac1{w_i}\int_{\gamma_i K_\AA^{M}} \theta_\phi(\g,\h)\,d\h\\
		&=\frac1{w_i}\int_{K_\AA^{M}} \theta_\phi(\g,\gamma_i \h)\,d\h\\
		&=\frac1{w_i}\int_{K_\AA^{M}} \sum_{\x\in V}(\rho(\g)\phi)(\h^{-1}\gamma_i^{-1}\x)\,d\h.
	\end{align*}
	
	Taking $\tau\in\gerH^g$, we evaluate $\Theta(\Psi_i)$ at the element $\g_\tau\in \SL_2(\AA)$ as defined in \S\ref{subsec: adelic classical}: this is $\begin{psmallmatrix}
		\phantom{y_i^{\sfrac{1}{2}}}\hspace{-10pt}1\hspace{2pt}& x_i\\
		0&\phantom{y_i^{\sfrac{1}{2}}}\hspace{-10pt}1\hspace{2pt}
	\end{psmallmatrix}
	\begin{psmallmatrix}
		y_i^{\sfrac{1}{2}}&0\\0&y_i^{-\sfrac{1}{2}}
	\end{psmallmatrix}$ at the infinite place $\sigma_i$, and the identity at all finite places. We can explicitly compute the action of $\rho(\g_\tau)$ on $\phi$ using Equations (\ref{eq:weilrep on diagonal}) and (\ref{eq:weilrep on unipotent}). Note in particular that the adelic Hilbert symbol, absolute value, and exponential are each a product over all places, as is $\phi$, so we can compute the action at each place independently. At all finite places $v$ we have $(\rho(\g_\tau)\phi)_v=\phi_v$, and since $\h\in K_\AA^M$ we find that $\h_v$ stabilizes $M_v$. Hence $\phi_v(\h_v^{-1}\gamma_{i,v}^{-1}\x)$ is $1$ if $\x\in \gamma_{i,v} M_v$ and $0$ otherwise. We conclude that
	\[\prod_{v<\infty}((\rho(\g_\tau)\phi)(\h^{-1}\gamma_{i}^{-1}\x))_v=\left\{\begin{array}{ll}
		1,&\x\in \gamma_i M=M_i,\\ 0&\text{otherwise.}
	\end{array}\right.\] 
	Now consider the infinite place $v$ corresponding to the embedding $\sigma_i$. Recall that we chose $\gamma_i$ to be the identity at infinite places, and since $y_i^{\sfrac{1}{2}}>0$ the local Hilbert symbol evaluates to $1$. We therefore have
	\begin{align*}
		((\rho(\g_\tau)\phi)(\h^{-1}\gamma_{i}^{-1}\x))_v&=y_i^{n/4}e^{2\pi i x_i\sigma_i(Q(\h_v^{-1}\x))}\phi_v(y_i^{\sfrac{1}{2}}\h_v^{-1}\x)\\
		&=y_i^{n/4}e^{2\pi i (x_i+iy_i)\sigma_i(Q(\x))},
	\end{align*}
	using the fact that $\h_v\in O_Q(\RR)$. 
	We can conclude that
	\begin{align*}
		\Theta(\Psi_i)^\flat(\tau)&=j(\g_\tau,i)^{n/2}\Theta(\Psi_i)(\g_\tau)\\
		&=\left(\prod_{i=1}^g y_i^{-\sfrac{1}{2}}\right)^{n/2}\left(\frac1{w_i}\int_{K_\AA^M}\sum_{\x\in M_i}\left(\prod_{i=1}^g y_i^{n/4}e^{2\pi i (x_i+iy_i)\sigma_i(Q(\x))}\right)\,d\h\right)\\
		&=\frac1{w_i}\sum_{\x\in M_i}e^{2\pi i \sum_{i=1}^g (x_i+iy_i)\sigma_i(Q(\x))}\left(\int_{K_\AA^M}\,d\h\right)\\
		&=\frac1{w_i}\theta_{M_i}(\tau).
	\end{align*}
	The result for $\Theta(\mathbb{1})^\flat$ follows from the fact that $\mathbb{1}$ is a sum over all $\Psi_i$ and the theta correspondence and unadelization are both linear.
\end{proof}

\subsubsection{The cuspidal part}\label{subsec: cuspidal part}
Recall that the integral of $\Psi_i$ on $O_Q(L)\backslash O_Q(\AA)$ is $\frac1{w_i}$, and the integral of~$\mathbb{1}$ on $O_Q(L)\backslash O_Q(\AA)$ is $m(\gerg)$. Thus we can write
\[\Psi_i=\frac{1}{w_i m(\gerg)}\mathbb{1}+\Psi_i^{({\rm cusp})}\]
for some function $\Psi_i^{({\rm cusp})}$ with integral $0$ on $O_Q(L)\backslash O_Q(\AA)$. We saw above what $\Psi_i$ and $\mathbb{1}$ map to under the theta correspondence and unadelization, so we now consider the suggestively named~$\Psi_i^{({\rm cusp})}$.

\begin{lem}\label{lem:cusp form from theta lift}
	If $\Psi$ has integral $0$ on $O_Q(L)\backslash O_Q(\AA)$ then $\Theta(\Psi)^\flat$ is a cusp form.
\end{lem}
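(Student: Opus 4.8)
The plan is to compute the constant term of the adelization $\Theta(\Psi)^\sharp$ and show it vanishes, then invoke Corollary~\ref{cor: cusp classical and adelic}. First I would recall that by definition $\Theta(\Psi)^\sharp = \Theta(\Psi)$ as a function on $\SL_2(\AA)$ (since $\Theta(\Psi)$ already has the correct equivariance under ${\rm SO}_2(\RR)^g$ and invariance under $\SL_2(L)$ coming from the left-$\SL_2(L)$-invariance of the theta kernel, and right $\KK$-invariance for a suitable level $\KK$ coming from the fact that $\Psi$ is $K_\AA^M$-invariant on the right; here one uses that the actions of $\SL_2(\AA)$ and $O_Q(\AA)$ commute). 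So it suffices to show that the constant term $\calW_0(\cdot, \Theta(\Psi))$ vanishes identically, and for this it is enough — by property (3) of the adelic Fourier expansion in \S\ref{subsec: adelic classical fourier} and strong approximation — to check $\calW_0(\g) = 0$ for $\g$ of the form $\alpha_\tau r$ with $\alpha_\tau \in \SL_2(\RR)^g$ and $r\in\KK$, or even just to show it is zero almost everywhere, which is the definition of cuspidal.

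The key computation is to unfold:
\[\calW_0(\g) = \int_{\AA/L} \Theta(\Psi)(u(x)\g)\, d_\AA x = \int_{\AA/L} \int_{O_Q(L)\backslash O_Q(\AA)} \Psi(\h)\, \theta_\phi(u(x)\g, \h)\, d\h\, d_\AA x.\]
Interchanging the order of integration (justified since everything in sight is absolutely convergent — the theta kernel is a rapidly decreasing function in the relevant variables), I would compute the inner integral over $\AA/L$ of $\theta_\phi(u(x)\g,\h) = \sum_{\x\in V}(\rho(u(x))\rho(\g,\h)\phi)(\x)$. Using Equation~(\ref{eq:weilrep on unipotent}), $(\rho(u(x))\psi)(\x) = e_\AA(xQ(\x))\psi(\x)$, so
\[\int_{\AA/L} \theta_\phi(u(x)\g,\h)\, d_\AA x = \sum_{\x\in V} (\rho(\g,\h)\phi)(\x)\int_{\AA/L} e_\AA(xQ(\x))\, d_\AA x.\]
Since $e_\AA$ is a nontrivial character of $\AA/L$ and $Q(\x)\in L$, the integral $\int_{\AA/L} e_\AA(xQ(\x))\,d_\AA x$ equals $1$ if $Q(\x) = 0$ and $0$ otherwise. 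Because $Q$ is positive-definite (totally definite), the only $\x\in V$ with $Q(\x)=0$ is $\x = 0$. Hence the inner integral collapses to the single term $(\rho(\g,\h)\phi)(0) = (\rho(\g)\phi)(0)$, which is independent of $\h$. Therefore
\[\calW_0(\g) = (\rho(\g)\phi)(0) \int_{O_Q(L)\backslash O_Q(\AA)} \Psi(\h)\, d\h = (\rho(\g)\phi)(0)\cdot 0 = 0,\]
using the hypothesis that $\Psi$ has integral $0$. So $\calW_0 \equiv 0$, $\Theta(\Psi)^\sharp$ is cuspidal, and by Corollary~\ref{cor: cusp classical and adelic} its unadelization $\Theta(\Psi)^\flat$ is a cusp form.

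The main obstacle — really the only delicate point — is justifying the interchange of the two integrations and making sure the constant-term computation is legitimate when $\Theta(\Psi)^\flat$ is only claimed to be a cusp form in the "$c_0(\gamma,f)=0$ for all $\gamma$" sense; one must either appeal to standard rapid-decrease estimates for the theta kernel on $O_Q(L)\backslash O_Q(\AA)$ (which is compact here, since $O_Q$ is anisotropic over $L$ as $Q$ is totally definite, so convergence is automatic and uniform), or phrase the argument via the classical Fourier coefficients $c_0(\gamma, \Theta(\Psi)^\flat)$ directly. The compactness of $O_Q(L)\backslash O_Q(\AA)$ is what makes everything clean: all integrals are over compact sets and the sums defining the theta kernel converge absolutely and locally uniformly, so Fubini applies without fuss. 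I would also note in passing that the identity $Q^{-1}(0) = \{0\}$ is exactly where total definiteness of $Q$ enters, paralleling the elliptic-curve case; were $Q$ isotropic the constant term would pick up extra contributions and $\Theta(\Psi)^\flat$ would fail to be cuspidal in general.
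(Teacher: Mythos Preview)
Your proof is correct and follows essentially the same route as the paper's: compute the adelic constant term $\calW_0(\g)$ by interchanging the integrals, use the Weil representation formula for $u(x)$ to see that only the $\x=0$ term survives (by total definiteness of $Q$), and observe that the resulting factor $(\rho(\g)\phi)(0)$ is independent of $\h$ so the integral of $\Psi$ vanishes by hypothesis. Your additional remarks on compactness of $O_Q(L)\backslash O_Q(\AA)$ and the role of anisotropy are apt and make the Fubini justification more explicit than the paper's.
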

\begin{proof}
	By Corollary~\ref{cor: cusp classical and adelic}, it suffices to prove that $\Theta(\Psi)$ is a cusp form on $\SL_2(\AA)$. We compute
	\begin{align*}
		\calW_0(\g)&=\int_{\AA/L} \Theta(\Psi)(u(x)\g)\,dx\\
		&=\int_{\AA/L} \int_{O_Q(L)\backslash O_Q(\AA)}\Psi(\h)\left(\sum_{v\in V}(\rho(u(x)\g)\phi)(\h^{-1}\x)\right)\,d\h\,dx.
		\intertext{
			Since $\AA/L$ and $O_Q(L)\backslash O_Q(\AA)$ are both compact and $\phi$ is a Schwartz function, we can interchange the integrals and sum:}
		\calW_0(\g)&=\int_{O_Q(L)\backslash O_Q(\AA)}\Psi(\h)\sum_{\x\in V}\left(\int_{\AA/L} (\rho(u(x)\g)\phi)(\h^{-1}\x)\,dx\right)\,d\h.
	\end{align*}
	We now examine the inner integral with $\g,\h,\x$ fixed. Using the fact that $\rho$ is a representation, we can apply Equation (\ref{eq:weilrep on unipotent}) to obtain
	\begin{align*}
		\int_{\AA/L} (\rho(u(x)\g)\phi)(\h^{-1}\x)\,dx&=\int_{\AA/L} (\rho(u(x))\cdot(\rho(\g)\phi))(\h^{-1}\x)\,dx\\
		&=(\rho(\g)\phi)(\h^{-1}\x)\int_{\AA/L} e_\AA(xQ(\x))\,dx
	\end{align*}
	(Recall that $\beta\in O_Q(\AA)$ so $Q(\beta_v s)=Q(s)$ for all places $v$).
	For $\x\neq 0$, we have $Q(\x)\in L^\times$, and so $x\mapsto e_\AA(xQ(\x))$ is a nontrivial character on $\AA/L$; thus this integral equals $0$. Thus only the $\x=0$ summand survives, and we have
	\begin{align*}
		\calW_0(\g)&=\int_{O_Q(L)\backslash O_Q(\AA)}\Psi(\h)(\rho(\g)\phi)(0)\,d\h.
	\end{align*}
	Now $(\rho(\g)\phi)(0)$ is constant in $\h$, so $\calW_0(\g)$ is some constant times the integral of $\Psi(\h)$ over $O_Q(L)\backslash O_Q(\AA)$, which is $0$ by assumption. Therefore $\Theta(\Psi)$ is a cusp form on $\SL_2(\AA)$.
\end{proof}

\begin{rmk}
	In \cite{Rallis} Rallis gives a criterion under which the theta correspondence takes cusp forms to cusp forms in a much more general setting.
\end{rmk}

Combining Lemmas \ref{lem:theta series from theta lift} and \ref{lem:cusp form from theta lift} we obtain the following (proven previoulsy in \cite[Lemma 2.2(4)]{Shimura-inhomogeneous}; see also \cite{Walling2} for the case $M=\Lambda$).

\begin{prop}
	For all $1\leq i\leq h(\gerg)$, $\theta_{M_i}-\theta_\gerg$ is a cusp form.
\end{prop}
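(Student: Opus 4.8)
The plan is to combine the two lemmas established just above with the linearity of the theta correspondence and of unadelization, exactly along the lines already set up in the preceding paragraph.

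First I would recall the decomposition $\Psi_i=\frac{1}{w_i\,m(\gerg)}\mathbb{1}+\Psi_i^{({\rm cusp})}$, where by construction $\Psi_i^{({\rm cusp})}$ has integral $0$ over $O_Q(L)\backslash O_Q(\AA)$ (this is forced since the integrals of $\Psi_i$ and of $\mathbb{1}$ are $\frac{1}{w_i}$ and $m(\gerg)$ respectively). Applying the theta correspondence $\Theta$ and then unadelization $(\cdot)^\flat$, both of which are linear, gives
\[
\Theta(\Psi_i)^\flat=\frac{1}{w_i\,m(\gerg)}\Theta(\mathbb{1})^\flat+\Theta(\Psi_i^{({\rm cusp})})^\flat.
\]
By Lemma~\ref{lem:theta series from theta lift} the left-hand side is $\frac{1}{w_i}\theta_{M_i}$ and $\Theta(\mathbb{1})^\flat=m(\gerg)\theta_\gerg$, so the displayed identity becomes $\frac{1}{w_i}\theta_{M_i}=\frac{1}{w_i}\theta_\gerg+\Theta(\Psi_i^{({\rm cusp})})^\flat$, i.e.
\[
\theta_{M_i}-\theta_\gerg=w_i\,\Theta(\Psi_i^{({\rm cusp})})^\flat.
\]
Finally, since $\Psi_i^{({\rm cusp})}$ integrates to $0$ on $O_Q(L)\backslash O_Q(\AA)$, Lemma~\ref{lem:cusp form from theta lift} tells us that $\Theta(\Psi_i^{({\rm cusp})})^\flat$ is a cusp form, hence so is the scalar multiple $w_i\,\Theta(\Psi_i^{({\rm cusp})})^\flat=\theta_{M_i}-\theta_\gerg$. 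This holds for each $1\leq i\leq h(\gerg)$, which is the assertion.

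There is essentially no obstacle here: the proposition is a bookkeeping consequence of the two lemmas, and the only thing to be careful about is the normalization of the Haar measure on $O_Q(\AA)$ (chosen so that $K_\AA^M$ has measure $1$) and of the weights $w_i$, so that the constants match up as above. One might add a sentence remarking that $\theta_\gerg$ is therefore the common Eisenstein component of all the $\theta_{M_i}$ (the Eisenstein part being one-dimensional in the relevant space once $L\neq\QQ$, by the Koecher principle), which is the use to which this proposition will be put in \S\ref{subsec: eisenstein}, but strictly for the statement as given the three-line argument above suffices.
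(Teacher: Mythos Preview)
Your argument is correct and is exactly the approach the paper intends: the proposition is stated as an immediate consequence of combining Lemmas~\ref{lem:theta series from theta lift} and~\ref{lem:cusp form from theta lift} via the decomposition $\Psi_i=\frac{1}{w_i m(\gerg)}\mathbb{1}+\Psi_i^{(\mathrm{cusp})}$, and you have written out those details accurately. (One small caveat: your closing aside that the Eisenstein part is one-dimensional ``by the Koecher principle'' is not quite right---Koecher controls Fourier coefficients at non-totally-positive indices, not the dimension of the Eisenstein subspace---but as you note this is not needed for the proposition itself.)
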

\id
We can therefore bound the Fourier coefficients of $\theta_{M_i}-\theta_\gerg$ using Theorem~\ref{thm:deligne}. If we can prove a sufficiently large \textit{lower bound} for the Fourier coefficients of $\theta_\gerg$, we will obtain a \textit{nontrivial lower bound} for the Fourier coefficients of $\theta_{M_i}$ for all $i$.
 
\subsubsection{The Eisenstein part}\label{subsec: eisenstein}

The key idea of this section is that the Fourier coefficients of the theta function of a genus, $\theta_\gerg$, can be written as a product of local densities. A version of this claim is expressed in \cite[\S 5]{Hanke1} in the case that $M$ is a lattice. A product formula of this kind can be done by combining the {\bf Siegel-Weil formula} --- a special case of which says that the theta function of a genus (\ref{eq:theta genus}) is equal to an appropriately defined Eisenstein series --- with a {\bf product formula} for the Fourier coefficients of an Eisenstein series. Both the Siegel-Weil formula and the product formula are originally due to Siegel~\cite{Siegel} but we shall use more general versions of these results below. These steps were carried out by Shimura in order to get an explicit formula for the Fourier coefficients of the theta function of a lattice coset genus, \cite[Theorem 1.5]{Shimura-inhomogeneous}. For convenience and ease of exposition we will want to use a slightly different form for the coefficients.

Let $\Lambda$ be an \textit{integral} $\ol$-lattice. Breaking with the notation of the previous sections, we will assume throughout this section that $M$ is a lattice coset {\it contained in} $\Lambda$; that is, for some sublattice $\Lambda'\subset\Lambda$ and some $t\in\Lambda$, we have $M=t+\Lambda'$.

Fix an isomorphism $V\simeq L^n$. For each finite place $v$ of $L$, let $\ell_v:V_v\to\CC$ denote the characteristic function of the lattice coset $M_v$. Using this setup, we can define an {\bf Eisenstein series} $E_M(x, s)$ for $x\in\AA$ and $s\in\CC$ as in \cite[(2.3.4)]{Shimura-representations}.  This Eisenstein series has a Fourier expansion
\begin{equation}\label{eqn: eisenstein expansion}
E_M(x,s)=\sum_{\mu\in L}c_\mu(E;s)e_\AA(\mu x),
\end{equation}
and by the Siegel--Weil formula, we have
\[E_M(x,0)=\Theta(\mathbb{1})(u(x))\]
for all $n\geq 4$. When $M$ is a lattice this is identical to \cite[(5.3.1)]{Shimura-representations}. For $n\geq 5$ this uses the version of the Siegel--Weil formula proven by Weil \cite[Th\'eor\`eme 5]{Weil}. For $n=4$ this uses the version of the Siegel--Weil formula proven by Kudla and Rallis \cite{KudlaRallis}, though it does not follow immediately as they use a different definition of Eisenstein series from Shimura; see \cite[\S 5.3]{Shimura-representations} for how to deduce this identity from \cite{KudlaRallis} (the argument is given for lattices but holds equally well for lattice cosets). We can therefore compute the Fourier coefficients of $\Theta(\mathbb{1})$ using (\ref{eqn: eisenstein expansion}): 
\[\calW_\mu(I_2,\Theta(\mathbb{1}))=\int_{\AA/L} \left(\sum_{\mu\in L}c_\mu(E_M;0)e_\AA(\mu x)\right)e_\AA(\mu x)^{-1}d_\AA x=c_\mu(E_M;0),\]
so by Lemma~\ref{lem:theta series from theta lift} and (\ref{eqn: adelic to classical coefficient}),
\begin{equation}\label{eqn: classical coefficient of theta genus}
	c_\mu(\theta_{\gerg(M)}) 	=\frac{e^{2\pi\Tr\mu}}{m(\gerg(M))}c_\mu(E_M;0).
\end{equation}
The Fourier coefficients $c_\mu(E_M;s)$ of $E_M(x,s)$ can each be expressed as an infinite product of local factors 
\begin{equation}\label{eqn: product formula}
c_\mu(E_M;s)=d_{L}^{\sfrac12}\prod_{v} c_{\mu,v}(E_M;s)
\end{equation}
as in \cite[p.~71]{Shimura-representations}, where $d_L$ denotes the discriminant of $L/\QQ$ as above.

For $v$ a finite place of $L$, $c_{\mu,v}(E_M;s)$ is a {\bf local density}, measuring the $v$-adic volume of the level set $Q^{-1}(\mu)$ inside the lattice coset $M_v$. To be more precise, let $\mathfrak p$ denote the maximal ideal of $\calO_{L,v}$, and $q:=[\calO_{L,v}:\mathfrak p]$ the residue degree. For $\mu\in L$ and $k\geq 0$, set  
\begin{equation}\label{eqn: solution set}
A_{\mu,k}(M_v):=\{x\in M_v/\gerp^k \Lambda_v:Q(x)\equiv \mu\pmod{\mathfrak p^k}\}.
\end{equation}
Recall that the Eisenstein series $E_M$ was defined using a choice of isomorphism $V\simeq L^n$; this induces a measure on $V_v\simeq L_v^n$ normalized so that $\calO_{L,v}^n$ has measure $1$.
\begin{lem}\label{lem: local density}
	For all sufficiently large $k$, $c_{\mu,v}(E_M;0)=\tau_v\cdot \sharp A_{\mu,k}(M)/q^{k(n-1)}$, where $\tau_v$ is the volume of $\Lambda_v$ according to the measure defined above.
\end{lem}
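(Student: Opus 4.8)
The statement asserts that the local factor $c_{\mu,v}(E_M;0)$ of the Eisenstein series $E_M$ at a finite place $v$ is a stabilized local density, i.e.\ equals $\tau_v \cdot \sharp A_{\mu,k}(M)/q^{k(n-1)}$ for all large $k$. My plan is to follow the standard route of unwinding the definition of the $v$-adic Fourier coefficient of the Eisenstein series as a local orbital integral over $L_v$, and then to compute that integral by stratifying the domain according to the $\gerp$-adic valuation of $x Q(v) - \mu$ (or the relevant Whittaker integrand), which produces exactly the counting function $\sharp A_{\mu,k}(M_v)$ in the limit. Concretely, the first step is to recall from \cite[\S 2.3 and \S 5]{Shimura-representations} the explicit expression for $c_{\mu,v}(E_M;s)$ as an integral of the form $\int_{L_v} \ell_v(\cdot)\,e_v(-\mu x)\, |x|_v^{\,?}\,d_vx$ twisted by the local Weil-representation action; evaluating at $s=0$ and using that $n\geq 4$ so everything converges, this integral is absolutely convergent.

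The second step is the stabilization itself. One partitions $L_v$ (or rather the support of the integrand) by the value $\ord_v(\text{argument})$; on each annulus $\ord_v(x)= -m$ the integrand is a Gauss-type sum over $M_v/\gerp^m\Lambda_v$ of $e_v$ applied to $x(Q(\cdot)-\mu)$. These Gauss sums stabilize: once $m$ exceeds a bound depending only on $\mu$ and the level of $M_v$ (essentially, once $\gerp^m$ kills the conductor of $M$ and $Q$ is ``smooth enough'' mod $\gerp^m$ — this is where the hypothesis $\gcd(\mu,\disc\Lambda)$-type conditions and large $k$ enter), the contributions from $\ord_v(x) \leq -(k+1)$ telescope and the whole integral collapses to the single term $q^{-k}\cdot\sharp\{x \in M_v/\gerp^k\Lambda_v : Q(x)\equiv \mu \bmod \gerp^k\}$, up to the normalizing volume factor $\tau_v = \vol(\Lambda_v)$ coming from the chosen measure on $V_v\simeq L_v^n$ normalized so $\calO_{L,v}^n$ has volume $1$. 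The factor $q^{k(n-1)}$ in the denominator is exactly the index $[\gerp^{-k}\Lambda_v : \Lambda_v] / q^k = q^{kn}/q^k$ that appears when one passes from counting solutions in $\Lambda_v/\gerp^k\Lambda_v$ to the normalized $v$-adic measure of the solution set, i.e.\ it records that we count in $n$ variables but impose one congruence. I would phrase this as: the $v$-adic volume of $\{x\in M_v : Q(x)\in \mu + \gerp^k\}$ equals $\tau_v\cdot q^{-k(n-1)}\sharp A_{\mu,k}(M_v)$, and $c_{\mu,v}(E_M;0)$ is the limit of this quantity as $k\to\infty$, which is attained for $k$ large.

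The main obstacle I anticipate is bookkeeping the normalizations: Shimura's Eisenstein series \cite{Shimura-representations} carries its own measure and power-of-$|x|$ conventions, and matching his $c_{\mu,v}(E_M;0)$ to the naive local density $\lim_k q^{-k(n-1)}\sharp A_{\mu,k}(M_v)$ requires tracking the $d_L^{1/2}$ in \eqref{eqn: product formula}, the local different contributions, and the factor $\tau_v$ precisely — a single misplaced power of $q$ invalidates the formula. The cleanest way to handle this is to cite the lattice case directly (where the identity $c_{\mu,v}=\tau_v\lim_k \sharp A_{\mu,k}/q^{k(n-1)}$ is, up to normalization, exactly what appears in \cite[\S 5]{Hanke1}) and then observe that replacing the lattice $\Lambda_v$ by the lattice coset $M_v = t + \Lambda'_v$ changes neither the convergence nor the stabilization argument: the coset translation only shifts the Gauss sums by a character, and the defining integral and its annular decomposition go through verbatim with $M_v$ in place of $\Lambda_v$. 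Thus the bulk of the work is verifying that the lattice-case computation is translation-insensitive, after which the stabilization for large $k$ follows from the same vanishing-of-higher-Gauss-sums estimate. The secondary difficulty — ensuring ``sufficiently large $k$'' is uniform enough to be useful downstream — is in fact automatic, since the bound depends only on $v$, $\mu$, and the level of $M$, all of which are fixed.
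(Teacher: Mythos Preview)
Your approach is essentially the same as the paper's: both unwind Shimura's integral expression for $c_{\mu,v}(E_M;s)$, stratify the $\sigma$-integral over $L_v$ by $\gerp$-adic valuation so that on each shell the character integral picks out the congruence condition $Q(x)\equiv\mu\pmod{\gerp^k}$, and then reduce the coset case to the lattice case by noting that the argument goes through one congruence class at a time. The only notable difference in execution is that the paper does not evaluate at $s=0$ and argue telescoping directly; instead it keeps the variable $s$, writes $(1-q^{-s})^{-1}|\beta|_v^{-s}c_{\mu,v}(E_M;s)=\sum_{k\geq 0} q^{k+e-ks}\vol(W_{e+k})$, observes that stabilization of $q^k\vol(W_k)$ at $d\tau_v$ makes this differ from $d\tau_v\sum_{k\geq 0}q^{-ks}$ by a polynomial in $q^{-s}$, and then multiplies by $(1-q^{-s})$ before setting $s=0$. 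This generating-function maneuver sidesteps the convergence bookkeeping you flagged as the main obstacle, and is a bit cleaner than tracking cancellation among annuli at $s=0$; but the mathematical content is identical to what you outlined.
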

\begin{proof}
	This is proven in the case $M=\Lambda$ in \cite[Lemma 2.5]{Shimura-representations}; we will merely point out the changes needed for our context. The fact that $\sharp A_{\mu,k}(M)/q^{k(n-1)}$ stabilizes for sufficiently large $k$ follows from the proof of \cite[Proposition 14.3]{Shimura-Euler-products}: the proposition is stated for lattices, but the proof proceeds by working modulo a sufficiently large power of $\gerp$ and considering one congruence class at a time (it is essentially an argument by Hensel lifting), and therefore holds even if we impose congruence conditions such as those defining the coset $M_v$. Let $d$ denote the limiting value of $\sharp A_{\mu,k}(M)/q^{k(n-1)}$ as $k\arr \infty$.
	
	As in \cite{Shimura-representations} we can compute
	\[(1-q^{-s})^{-1}|\beta|_v^{-s}c_{\mu,v}(E_M;s)=\sum_{k=0}^\infty q^{-ks}\int_{M_v}\int_{\gerp^{-k-e}}e_v(\delta_v^{-1}\sigma(Q(x)-\mu))\;d\sigma\;dx\]
	where $\beta\in L_v^\times$ is a parameter used to define the Eisenstein series \cite[(2.2.3)]{Shimura-representations}, $\delta_v$ is a generator of the different ideal of $L_v/\QQ_p$, $e$ is the $v$-adic valuation of $\beta/\delta$, $e_v$ is the $v$-adic exponential defined in \S\ref{subsec: adelic classical fourier}, and $dx$ is the Haar measure on $V_v\simeq L_v^n$ under which $\calO_{L,v}^n$ has measure $1$. Observe that $e_v(\delta_v^{-1}x)$ is identically $1$ on $\calO_{L,v}$ but is a nontrivial character on $\gerp^{-\ell}$ for any $\ell>0$, so the inner integral over $\gerp^{-k-e}$ is $q^{k+e}$ if $Q(x)\equiv\mu\pmod{\gerp^{k+e}}$ and $0$ otherwise. Setting \[W_k:=\{x\in M_v:Q(x)\equiv \mu\pmod{\mathfrak p^k}\},\] we can therefore write
	\[(1-q^{-s})^{-1}|\beta|_v^{-s}c_{\mu,v}(E_M;s)=\sum_{k=0}^\infty q^{k+e-ks}\vol(W_{e+k}).\]
	The equality $\vol(W_k)/\vol(\Lambda_v)=\sharp A_{\mu,k}(M_v)/q^{kn}$ holds for sufficiently large $k$, namely for all $k$ such that the modulus of $M$ contains $\gerp^k\Lambda$ (so that $M$ is a union of cosets of $\gerp^k\Lambda$).  Because of this, and the fact that $\sharp A_{\mu,k}(M_v)/q^{k(n-1)}=d$ for sufficiently large $k$, we have $d\cdot \tau_v=q^k\vol(W_k)$ for all but finitely many $k$, and so
	\[(1-q^{-s})^{-1}|\beta|_v^{-s}c_{\mu,v}(E_M;s)-d\tau_v\sum_{k=0}^\infty q^{-ks}\]
	is a polynomial in $q^{-s}$. Multiplying by $1-q^{-s}$ and evaluating at $s=0$ we obtain the desired equality $c_{\mu,v}(E_M;0)=d\cdot \tau_v$.
\end{proof}

In light of (\ref{eqn: classical coefficient of theta genus}), our next goal is to find a lower bound for $c_\mu(E_M;0)$ as a function of $\mu$; if we can show the growth rate is sufficiently large (in particular, fast enough that it overtakes the growth rate of any cusp form), then we will be able to conclude that the $\mu^{\rm th}$ Fourier coefficient of the theta function of $M$ is nonzero. Obtaining an explicit formula for this growth rate requires a fairly involved computation. Since this has already been done in \cite{Hanke1} in the case $M$ is a lattice, we will avoid this computation by relating the Fourier coefficients coming from lattice cosets to the Fourier coefficients coming from lattices. 

All the above setup holds for the lattice $\Lambda$ as well: that is, there is an Eisenstein series $E_\Lambda$ for which 
\[c_\mu(\theta_{\gerg(\Lambda)})=\frac{e^{2\pi\Tr\mu}}{m(\gerg(\Lambda))}c_\mu(E_\Lambda;0)\qquad\text{and}\qquad c_\mu(E_\Lambda;s)=d_L^{\sfrac12}\prod_{v} c_{\mu,v}(E_\Lambda;s).\]
For finite places $v$, defining
\begin{equation}\label{eqn: solution set}
A_{\mu,k}(\Lambda_v):=\{x\in \Lambda_v/\gerp^k \Lambda_v:Q(x)\equiv \mu\pmod{\mathfrak p^k}\}
\end{equation}
we have $c_{\mu,v}(E_\Lambda;0)=\tau_v (\sharp A_{\mu,k}(\Lambda))/q^{k(n-1)}$.

For further simplicity we now restrict our attention to cosets with modulus $\gerp\Lambda$ for a single prime $\gerp$ of $\ol$.

\begin{lem}\label{lem: bound coset coeff by lattice coeff}
	Let $\Lambda$ be an $\ol$-integral lattice of rank $n\geq 4$, and let $\gerp$ be a prime ideal of $\ol$ with ideal norm $q$. Consider the lattice coset $M:=t+\gerp \Lambda$ for some $t\in\Lambda\setminus \gerp\Lambda$ (so $M$ has modulus $\gerp\Lambda$). Suppose $\mu\in\ol$ is coprime to $\disc \Lambda$, and that $\mu$ is locally represented at $v$ by $M$. Then
	\[c_\mu(\theta_{\gerg(M)})> \frac{m(\gerg(\Lambda))}{2q^{\lambda n}m(\gerg(M))}c_\mu(\theta_{\gerg(\Lambda)}),\]
	where $\lambda=2\ord_\gerp(2)+1$.
\end{lem}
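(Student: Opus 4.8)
\textbf{Proof strategy for Lemma~\ref{lem: bound coset coeff by lattice coeff}.}

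The plan is to compare the two quantities $c_\mu(\theta_{\gerg(M)})$ and $c_\mu(\theta_{\gerg(\Lambda)})$ place by place, using the product formulas (\ref{eqn: product formula}) and the identification of local densities provided by Lemma~\ref{lem: local density}. By (\ref{eqn: classical coefficient of theta genus}) and the analogous identity for $\Lambda$, and since the Archimedean factor $e^{2\pi\Tr\mu}$ and the global factor $d_L^{\sfrac12}$ are common to both, it suffices to compare $\prod_v c_{\mu,v}(E_M;0)$ with $\prod_v c_{\mu,v}(E_\Lambda;0)$, then reintroduce the factors $m(\gerg(\Lambda))/m(\gerg(M))$. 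At every finite place $v\neq\gerp$ the coset $M_v$ equals $\Lambda_v$, so $c_{\mu,v}(E_M;0)=c_{\mu,v}(E_\Lambda;0)$, and the Archimedean factors also agree since $M$ and $\Lambda$ have the same underlying quadratic space. Hence the entire comparison is localized at the single prime $\gerp$: I would reduce the lemma to the inequality
\[
c_{\mu,\gerp}(E_M;0)\ >\ \frac{1}{2q^{\lambda n}}\, c_{\mu,\gerp}(E_\Lambda;0),
\]
with $\lambda = 2\ord_\gerp(2)+1$.

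\textbf{The local estimate at $\gerp$.} Here I would use Lemma~\ref{lem: local density}: both local densities are limits (for $k$ large) of $\tau_{\gerp}\cdot\sharp A_{\mu,k}(\star_\gerp)/q^{k(n-1)}$, where $\star$ is $M$ or $\Lambda$, and $\tau_\gerp$ is the same volume of $\Lambda_\gerp$ in both cases. So I must compare, for $k$ large, the count of solutions of $Q(x)\equiv\mu\pmod{\gerp^k}$ inside $M_\gerp = t+\gerp\Lambda_\gerp$ versus inside $\Lambda_\gerp$. The key input is Hensel-type lifting: since $\mu$ is coprime to $\disc\Lambda$, any solution $x$ of $Q(x)\equiv\mu\pmod{\gerp^{\lambda}}$ with $\lambda = 2\ord_\gerp(2)+1$ has $\nabla Q(x) = 2(x,\cdot)$ nonvanishing modulo $\gerp^{\lceil\lambda/2\rceil}$ (this is the standard nonsingularity threshold for quadratic forms, accounting for the prime $2$), hence lifts uniquely modulo every higher power. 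Thus $\sharp A_{\mu,k}(\Lambda_\gerp)/q^{k(n-1)}$ stabilizes to $\sharp A_{\mu,\lambda}(\Lambda_\gerp)/q^{\lambda(n-1)}$ for $k\geq\lambda$, and similarly for $M$. The essential point is then: a nonsingular solution modulo $\gerp^\lambda$ that happens to lie in the residue class $t\bmod\gerp$ contributes to $A_{\mu,\lambda}(M_\gerp)$; I would show that the number of solutions of $Q(x)\equiv\mu\pmod{\gerp^\lambda}$ in $\Lambda_\gerp/\gerp^\lambda\Lambda_\gerp$ that are congruent to $t$ modulo $\gerp$ is, crudely, at least a $\tfrac12 q^{-\lambda(n-1)}\cdot q^{-(n-1)}$-fraction — or more carefully a $\tfrac12 q^{-\lambda n}$-fraction after accounting for the $q^n$ residue classes modulo $\gerp$ — of the total count in $\Lambda_\gerp$. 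The factor $\tfrac12$ absorbs the possible two-to-one behavior at solutions with $2(x,x)\equiv 0$, i.e.\ at the prime $2$; this is exactly why $\lambda$ carries the $2\ord_\gerp(2)$ term.

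\textbf{Main obstacle.} The delicate step is the counting argument showing that solutions in the prescribed residue class $t\bmod\gerp$ are not too rare — i.e.\ that they form at least a $\tfrac12 q^{-\lambda n}$ fraction of all solutions. One cannot simply say ``solutions are equidistributed among residue classes,'' because that is false in general; instead one leverages the hypothesis that $\mu$ is \emph{locally represented at $\gerp$ by $M$} to guarantee $A_{\mu,k}(M_\gerp)$ is nonempty for all $k$, then uses unique Hensel lifting from level $\lambda$ upward to propagate \emph{each} such solution. So the correct statement is: $A_{\mu,k}(M_\gerp)$ is nonempty $\Rightarrow$ it has a nonsingular point at level $\lambda$ $\Rightarrow$ $\sharp A_{\mu,k}(M_\gerp)/q^{k(n-1)} \geq q^{-\lambda(n-1)}$ for $k$ large, while $\sharp A_{\mu,k}(\Lambda_\gerp)/q^{k(n-1)}$ is bounded above by $q^{\lambda}\cdot q^{-\lambda(n-1)}\cdot(\text{something} \leq 2q^{\lambda(n-1)})$ — more precisely bounded above by $2q^{\lambda n}$ times the lower bound for $M$ after tracking the exponents of $q$. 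Assembling these exponents correctly to land on exactly $\tfrac12 q^{-\lambda n}$, and verifying that the factor of $2$ (rather than a worse power of $q$) genuinely suffices at $p=2$, is where the real care is needed; the rest is bookkeeping with the product formula and the masses $m(\gerg(M))$, $m(\gerg(\Lambda))$, which pass through as the stated ratio because $K_\AA^M$ has finite index in $K_\AA^\Lambda$ as noted in \S\ref{sec:lattices cosets}.
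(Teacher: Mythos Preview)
Your overall reduction is exactly right and matches the paper: everything cancels away from $\gerp$, so the whole lemma comes down to showing
\[
c_{\mu,\gerp}(E_M;0)\ >\ \tfrac{1}{2q^{\lambda n}}\, c_{\mu,\gerp}(E_\Lambda;0).
\]
The lower bound for $M$ that you sketch (local representability $\Rightarrow$ a good solution at level $\lambda$ $\Rightarrow$ $c_{\mu,\gerp}(E_M;0)\geq \tau_\gerp q^{-\lambda(n-1)}$) is also correct. The gap is in the upper bound for $\Lambda$, and it stems from a false assertion.

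You write that ``since $\mu$ is coprime to $\disc\Lambda$, any solution $x$ of $Q(x)\equiv\mu\pmod{\gerp^{\lambda}}$ has $\nabla Q(x)=2(x,\cdot)$ nonvanishing''. This is not true. The hypothesis $\gcd(\mu,\disc\Lambda)=1$ does not force $\gerp\nmid\mu$; it only says that \emph{if} $\gerp\mid\mu$ then $\gerp\nmid\disc\Lambda$. When $\mu\in\gerp$, solutions $x\in\gerp\Lambda_\gerp$ to $Q(x)\equiv\mu\pmod{\gerp^k}$ can exist (e.g.\ whenever $\ord_\gerp(\mu)\geq 2$), and such $x$ are \emph{never} good: if $x=\pi x'$ then $(x,y)=\pi(x',y)\in\gerp$ for every $y$. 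These bad solutions do not Hensel-lift at the expected rate, so $\sharp A_{\mu,k}(\Lambda_\gerp)/q^{k(n-1)}$ need not stabilize at $k=\lambda$, and your trivial bound $\sharp A_{\mu,\lambda}(\Lambda_\gerp)\leq q^{\lambda n}$ is no longer the relevant one. The paper handles this by splitting into the two cases $\mu\notin\gerp$ (where your argument works verbatim and gives ratio $\geq q^{-\lambda n}$) and $\mu\in\gerp$. In the latter case one stratifies $A_{\mu,k}(\Lambda_\gerp)$ by the valuation $r=\ord_\gerp(x)$, rescales by $\pi^{-r}$ to reduce to good solutions of $Q\equiv\mu/\pi^{2r}$, and sums: this yields
\[
c_{\mu,\gerp}(E_\Lambda;0)\ <\ \frac{\tau_\gerp\, q^{\lambda}}{1-q^{2-n}}\ <\ 2\,\tau_\gerp\, q^{\lambda},
\]
and combining with the lower bound for $M$ gives the ratio $>\tfrac12 q^{-\lambda n}$.

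This also corrects your explanation of the constant $\tfrac12$: it has nothing to do with the prime $2$ or ``two-to-one behavior''. It is the crude bound $\sum_{r\geq 0} q^{r(2-n)}=\frac{1}{1-q^{2-n}}<2$ on the geometric series over valuation strata, valid because $n\geq 4$. The $2\ord_\gerp(2)$ in $\lambda$ is purely the Hensel threshold and is a separate issue.
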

\begin{proof}
	Let $E_M$ and $E_\Lambda$ denote the Eisenstein series corresponding to $M$ and $\Lambda$ respectively. Let~$v$ denote the place corresponding to $\gerp$. At all places $v'\neq v$ (including infinite places) we have $M_{v'}=\Lambda_{v'}$ and so $c_{\mu,v'}(E_M;s)=c_{\mu,v'}(E_\Lambda;s)$.
	By (\ref{eqn: product formula}) and its analogue for $\Lambda$,
	\[c_\mu(E_M;0)c_{\mu,v}(E_\Lambda;0) = c_\mu(E_\Lambda;0)c_{\mu,v}(E_M;0).\]
	By (\ref{eqn: classical coefficient of theta genus}) and its analogue for $\Lambda$ we can then write
	\[c_\mu(\theta_{\gerg(M)})c_{\mu,v}(E_\Lambda;0) = \frac{m(\gerg(\Lambda))}{m(\gerg(M))}c_\mu(\theta_{\gerg(\Lambda)})c_{\mu,v}(E_M;0).\]
	So it suffices to show $c_{\mu,v}(E_\Lambda;0)\neq 0$ and $c_{\mu,v}(E_M;0)/c_{\mu,v}(E_\Lambda;0)> \frac12 q^{-\lambda n}$.
	
	First consider the case $\mu\notin\gerp$. We say $x\in\Lambda_v$ is ``good'' (cf.~\cite[Remark 3.1.1]{Hanke1}) if there exists $y\in\Lambda_v$ with $(x,y)\notin\gerp$, where $(\cdot,\cdot)$ is the inner product associated to $Q$ as defined in \S\ref{sec: quat alg basics}. If $x\in\Lambda_v$ satisfies $Q(x)\equiv\mu$, then $x$ is good as we can take $y=x$. Thus $A_{\mu,k}(M_v)$ and $A_{\mu,k}(\Lambda_v)$ both consist only of good elements. We can therefore use Hensel lifting to show that for any $\ell\geq 0$, the number of solutions mod $\gerp^{\lambda+\ell}\Lambda_v$ to $Q(x)\equiv \mu\pmod{\gerp^{\lambda+\ell}}$ is $q^{\ell(n-1)}$ times the number of solutions mod ${\gerp^{\lambda}}\Lambda_v$ to $Q(x)\equiv \mu\pmod{\gerp^{\lambda}}$ (see for instance \cite[Lemma 3.2]{Hanke1}). We can conclude that the equality in Lemma~\ref{lem: local density} holds already for $k=\lambda$, as does the corresponding equality for $c_{\mu,v}(E_\Lambda;0)$.
	By assumption there exists $z\in M$ with $Q(z)=\mu$, so $\sharp A_{\mu,\lambda}(\Lambda_v)\geq \sharp A_{\mu,\lambda}(M_v)\geq 1$. From this we can conclude that $c_{\mu,v}(E_\Lambda;0)\neq 0$ and
\begin{align*}
	\frac{c_{\mu,v}(E_M;0)}{c_{\mu,v}(E_\Lambda;0)}&=\frac{\sharp A_{\mu,\lambda}(M_v)}{\sharp A_{\mu,\lambda}(\Lambda_v)}
	\geq\frac{1}{\sharp\Lambda_v/\gerp^\lambda\Lambda_v}
	=\frac{1}{q^{\lambda n}}> \frac{1}{2q^{\lambda n}}.
\end{align*}

	Now suppose instead that $\mu\in\gerp$. Then the assumption that $\mu$ is coprime to $\disc\Lambda$ implies that $\gerp\nmid \disc \Lambda$. In particular, this implies that the quadratic form $Q$ is non degenerate on $\Lambda_v/\gerp\Lambda_v$, so that every element of $\Lambda_v\setminus\gerp\Lambda_v$ (and in particular every element of $M$) is good. As above, we can conclude that 
	\[c_{\mu,v}(E_M;0)=\tau_v \frac{\sharp A_{\mu,\lambda}(M_v)}{q^{\lambda(n-1)}}\geq \frac{\tau_v }{q^{\lambda(n-1)}}\]
and that $c_{\mu,v}(E_\Lambda;0)\neq 0$. To compute an upper bound for $c_{\mu,v}(E_\Lambda;0)$ will take a bit more work, as there may be elements $x\in\gerp\Lambda_v$ with $Q(x)=\mu$. These are not good solutions, and therefore the solutions mod $\gerp^\lambda\Lambda_v$ may have more than the expected number of lifts to $\gerp^{\lambda+\ell}\Lambda_v$. 
	
	To resolve this, we will partition the solutions to $Q(x)=\mu$ based on their $\gerp$-adic valuation. Let $k$ denote the $\gerp$-adic valuation of $\mu$. Every $x\in \Lambda_v$ with $Q(x)\equiv\mu\pmod{\gerp^{k+1}}$ has valuation at most $k/2$, because if $x\in\gerp^r$ for $r>k/2$, then $Q(x)\in\gerp^{k+1}$ while $\mu\notin\gerp^{k+1}$. For each $0\leq r\leq k/2$, the set of $x\in\gerp^r\Lambda_v\setminus\gerp^{r+1}\Lambda_v$ with $Q(x)\equiv\mu\pmod{\gerp^{2r+\lambda}}$ is in bijection (by dividing by a power of a uniformizer $\pi$ of $\gerp$) with the set of $x\in\Lambda_v\setminus\gerp\Lambda_v$ with $Q(x)\equiv\mu/\pi^{2r}\pmod{\gerp^\lambda}$. This latter set consists only of good elements. 	
	Thus for all $\ell\geq k$,
	\begin{align*}
	\sharp A_{\mu, \ell+\lambda}(\Lambda_v)&=\#\{x\in\Lambda_v/\gerp^{\ell+\lambda}\Lambda_v:Q(x)\equiv \mu\pmod{\gerp^{\ell+\lambda}}\}\\
		& =\sum_{r=0}^{k/2}\#\{x\in(\gerp^r\Lambda_v\setminus \gerp^{r+1}\Lambda_v)/\gerp^{\ell+\lambda}\Lambda_v:Q(x)\equiv \mu\pmod{\gerp^{\ell+\lambda}}\}\\
		& =\sum_{r=0}^{k/2}\#\{x\in(\Lambda_v\setminus \gerp\Lambda_v)/\gerp^{\ell-r+\lambda}\Lambda_v:Q(x)\equiv \mu/\pi^{2r}\pmod{\gerp^{\ell-2r+\lambda}}\}\\
		& \leq \sum_{r=0}^{k/2}q^{rn}\#\{x\in(\Lambda_v\setminus \gerp\Lambda_v)/\gerp^{\ell-2r+\lambda}\Lambda_v:Q(x)\equiv \mu/\pi^{2r}\pmod{\gerp^{\ell-2r+\lambda}}\}\\
		& = \sum_{r=0}^{k/2}q^{rn}q^{(\ell-2r)(n-1)}\#\{x\in(\Lambda_v\setminus \gerp\Lambda_v)/\gerp^{\lambda}\Lambda_v:Q(x)\equiv \mu/\pi^{2r}\pmod{\gerp^{\lambda}}\}.
	\end{align*}
	The second last line follows because each coset of $\gerp^{\ell-r+\lambda}\Lambda_v$ contains $q^{rn}$ cosets of $\gerp^{\ell-2r+\lambda}\Lambda_v$, and the last line follows by Hensel lifting of good solutions as above.
	Since the set in the last line is a subset of $\Lambda_v/\gerp^\lambda\Lambda_v$ which has $q^{\lambda n}$ elements, we have
	\[\sharp A_{\mu, \ell+\lambda}(\Lambda_v) \leq \sum_{r=0}^{k/2}q^{rn+(\ell-2r)(n-1)+\lambda n}< q^{\ell(n-1)+\lambda n}\sum_{r=0}^{\infty}q^{r(2-n)}=\frac{q^{(\ell+\lambda)(n-1)+\lambda}}{1-q^{2-n}},\]
	so that by Lemma~\ref{lem: local density},
	\[c_{\mu,v}(E_\Lambda;0)< \frac{\tau_v q^{\lambda}}{1-q^{2-n}}< 2q^{\lambda}\tau_v .\]
	Therefore $c_{\mu,v}(E_M;0)/c_{\mu,v}(E_\Lambda;0)> \frac12 q^{-\lambda n}$ as desired.
\end{proof}

\subsubsection{Proof of Theorem~\ref{thm: coset local global}}\label{sec: thm proof}
	We first recall the theorem statement. We have $n\geq 4$ even, and $\Lambda$, $\gerp$, and $M$ as in the statement of Proposition~\ref{lem: bound coset coeff by lattice coeff}. Assuming $\mu\in\ol$ is coprime to $\disc \Lambda$, $N_{L/\QQ}(\mu)$ is sufficiently large, and $\mu$ and is everywhere locally represented by $M$, we wish to show that $\mu$ is (globally) represented by $M$. We use the notation $f(\mu)\gg g(\mu)$ to mean that there exists a positive constant $c$ (not depending on $\mu$, though it may depend on $L,\Lambda,M,\varepsilon,\gerp$, etc.) such that for all $\mu$ with $N_{L/\QQ}(\mu)$ sufficiently large we have $f(\mu)> cg(\mu)$.
	
	We begin by bounding the Fourier coefficients of $\theta_{\gerg(\Lambda)}$ from below; this was done by Hanke \cite{Hanke1}. If $n\geq 5$ then by \cite[Theorem 5.7(c)]{Hanke1} we have
	\[c_\mu(\theta_{\gerg(\Lambda)})\gg N_{L/\QQ}(\mu)^{(n-2)/2}.\]
	Now assume $n=4$. The assumption that $\mu$ is coprime to $\disc \Lambda$ implies that $Q$ is isotropic on $V_v$ at all places $v$ dividing $\mu$ \cite[Remark 3.8.1]{Hanke1}. So by \cite[Theorem 5.7(b)]{Hanke1} we have
	\[c_\mu(\theta_{\gerg(\Lambda)})\gg N_{L/\QQ}(\mu)\prod_{\gerp\mid \mu,\;\chi(\gerp)=-1}\frac{N_{L/\QQ}(\gerp)-1}{N_{L/\QQ}(\gerp)+1}\]
	where $\chi$ is the Hecke character on $L$ defined by $\chi(\gerp):=\left(\frac{\disc\Lambda_\gerp}{\gerp}\right)$.  Noting that $\frac{x-1}{x+1}\geq \left(1-\frac1x\right)^2$ for all $x\geq 1$, we can bound
	\[\prod_{\gerp\mid \mu,\;\chi(\gerp)=-1}\frac{N_{L/\QQ}(\gerp)-1}{N_{L/\QQ}(\gerp)+1}\geq \prod_{\gerp:N_{L/\QQ}(\gerp)\leq N_{L/\QQ}(\mu)}\left(1-\frac{1}{N_{L/\QQ}(\gerp)}\right)^2\gg \log(N_{L/\QQ}(\mu))^{-2}\]
	by Merten's third theorem for number fields \cite[Theorem 5]{Leb}.
	In particular, for all $n\geq 4$ and all $\varepsilon>0$ we have 
	\[c_\mu(\theta_{\gerg(\Lambda)})\gg N_{L/\QQ}(\mu)^{(n-2)/2-\varepsilon}.\]
	By Lemma~\ref{lem: bound coset coeff by lattice coeff}, the ratio of $c_\mu(\theta_{\gerg(M)})$ to $c_\mu(\theta_{\gerg(\Lambda)})$ is bounded below by a positive constant (not depending on $\mu$), so this bound still holds if we replace $\gerg(\Lambda)$ by $\gerg(M)$.
	
	Now assume $n$ is even; then by Theorem~\ref{thm:deligne}, $|c_\mu(\theta_{\gerg(M)})-c_\mu(\theta_M)|\ll N_{L/\QQ}(\mu)^{(n/4-1/2)-\varepsilon'}$ for all $\varepsilon'>0$. Since $n/2-1 > n/4-1/2$ for all $n\geq 4$, we see that $c_\mu(\theta_{\gerg(M)})$ grows faster than $|c_\mu(\theta_{\gerg(M)})-c_\mu(\theta_M)|$, and therefore $c_\mu(\theta_M)>0$ for $N_{L/\QQ}(\mu)$ sufficiently large. This concludes the proof.
			
\begin{rmk}\label{rmk: why n even}
	The theorem statement should hold for odd $n\geq 5$ as well (and potentially also for $n=3$ if additional constraints are imposed). The main missing piece is a bound on the Fourier coefficients of the cusp form $\theta_\gerg-\theta_M$. When $n$ is odd, this cusp form is a modular form of half-integral weight. It is certainly possible to bound the Fourier coefficients of cusp forms of half-integral weight, by relating a cusp form of weight $w\in\ZZ+\frac12$ to its ``Shimura lift'' of weight $2w-1$ as described in \cite{Shimura-half-integral}. This is worked out explicitly for theta functions of lattices with $L=\QQ$ in \cite[\S 4]{Hanke1}, but to reproduce such bounds for theta functions of lattice cosets over totally real fields would take us too far afield for this paper, and is not necessary for our main application.
\end{rmk}

\begin{rmk}
	The condition that $\mu$ be coprime to $\disc\Lambda$ is slightly stronger than necessary, but for $n=4$ it cannot be removed completely. In general, the size of the Fourier coefficient $c_\mu(\theta_{\gerg(\Lambda)})$ depends not on $N_{L/\QQ}(\mu)$, but rather on  $N_{L/\QQ}(\mu_{\rm Iso})$, where $\mu_{\rm Iso}$ is the greatest divisor of $\mu$ that is coprime to all primes $v$ such that $Q$ is anisotropic on $\Lambda_v$. For $n\geq 5$ no such primes exist and so $\mu=\mu_{\rm Iso}$ always; in general, such primes always divide $\disc\Lambda$, and so taking $\mu$ coprime to $\disc\Lambda$ ensures $\mu=\mu_{\rm Iso}$. 
	
	However, if $n=4$, $Q$ is anisotropic on $\Lambda_v$, and $\mu$ is divisible by a very large power of the prime at $v$, then $c_\mu(\theta_{\gerg(\Lambda)})$ can be extremely small even if $N_{L/\QQ}(\mu)$ is large; thus it is possible to have $\mu$ of arbitrarily large norm that is everywhere locally representable by $M$ and yet $c_\mu(\theta_M)=0$. An example of this behavior is given by Watson, who showed that for the lattice $\Lambda=\ZZ^4$ with quadratic form $w^2+x^2+7y^2+7z^2$, and for $\mu:=3\cdot 7^{2u}$ with any $u\geq 0$, the Fourier coefficient $c_{\mu}(\theta_\Lambda)$ is zero (there is no element of $\Lambda$ with norm $\mu$) even though $\mu$ is everywhere locally represented \cite[Section 7.7]{Watson}. In this example $\mu$ can be arbitrarily large, but $\mu_{\rm Iso}=3$ since the lattice is anisotropic at $7$.
\end{rmk}

\subsection{Generating lattices over totally real fields by elements of specified norm} \label{sec:inhomogenous theta functions}

\begin{thm}\label{thm: totally real local-global generators}
	Let $\Lambda$ be an integral positive-definite $\ol$-lattice of even rank $n\geq 4$. Let $S\subset\ol$ be a set of totally positive elements coprime to $\disc\Lambda$ such that $\{N_{L/\QQ}(s): s\in S\}$ is infinite. Suppose that for all $s\in S$ and all finite places $v$ of $L$, $\Lambda_v$ is generated as an $\calO_{L,v}$-module by elements of norm $s$. Then~$\Lambda$ is generated as an $\ol$-module by elements with norm in $S$.
\end{thm}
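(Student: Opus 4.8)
The plan is to mimic the proof of Theorem~\ref{thm: special generators for a lattice} over $\ZZ$, replacing Sardari's strong approximation input by the local--global principle for lattice cosets of Theorem~\ref{thm: coset local global}. Let $\Lambda'$ be the $\ol$-submodule of $\Lambda$ generated by all $x\in\Lambda$ with $Q(x)\in S$; clearly $\Lambda'\subseteq\Lambda$ and it contains a basis of $V:=\Lambda\otimes_\ol L$ (since for each finite $v$, the hypothesis guarantees $\Lambda_v$ is spanned by norm-$s$ vectors, so $\Lambda'_v=\Lambda_v$ for all but finitely many $v$, forcing $\Lambda'$ to be full rank). I want to show $\Lambda'=\Lambda$. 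It suffices to show $\Lambda'_\gerp=\Lambda_\gerp$ for every prime $\gerp$ of $\ol$, and by the hypothesis the only primes where this could possibly fail are those finitely many $\gerp$ dividing the index $[\Lambda:\Lambda']$; so fix such a $\gerp$ and argue that $\Lambda'$ contains some element of $\Lambda\setminus\gerp\Lambda$ that lies in each coset class appropriately — more precisely, it suffices to show that for every $t\in\Lambda$, the coset $M:=t+\gerp\Lambda$ contains an element of $\Lambda'$, since then $\Lambda=\Lambda'+\gerp\Lambda$ and Nakayama's lemma (applied to the finitely generated $\ol$-module $\Lambda/\Lambda'$, localized at $\gerp$) gives $\Lambda'_\gerp=\Lambda_\gerp$.

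So fix $\gerp$ and $t\in\Lambda$; we may assume $t\notin\gerp\Lambda$ (if $t\in\gerp\Lambda$ then $0\in M\cap\Lambda'$ trivially, or rather one reduces to representatives of $\Lambda/\gerp\Lambda$, handling the zero class separately — actually the zero class needs a small separate remark, see below). Set $M=t+\gerp\Lambda$, a lattice coset of rank $n$ with modulus $\gerp\Lambda$. The goal is to find $s\in S$ with $N_{L/\QQ}(s)$ large and such that $M$ represents $s$; such a representing vector $x\in M\subset\Lambda$ has $Q(x)=s\in S$, hence $x\in\Lambda'$, and $x\equiv t\pmod{\gerp\Lambda}$, which is what we need. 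To invoke Theorem~\ref{thm: coset local global} with this $M$ and $\mu=s$, I must check: (i) $s$ is coprime to $\disc\Lambda$ — true for all $s\in S$ by hypothesis; (ii) $N_{L/\QQ}(s)$ can be taken arbitrarily large — true since $\{N_{L/\QQ}(s):s\in S\}$ is infinite; (iii) $M$ represents $s$ everywhere locally. For the local condition at a place $w\neq v$ (where $v$ is the place of $\gerp$), we have $M_w=\Lambda_w$, and the hypothesis says $\Lambda_w$ is generated by norm-$s$ elements, so in particular $\Lambda_w$ contains an element of norm $s$; at the infinite places $Q$ is positive-definite and $s$ is totally positive, so $s$ is represented over $\RR$ under each embedding (a positive-definite real quadratic form in $n\geq 4\geq 1$ variables represents every positive real number). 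The remaining — and I expect the only genuinely delicate — point is (iii) at the place $v$ itself: I need $x\in M_v=t+\gerp\Lambda_v$ with $Q(x)=s$. Here I use that $\Lambda_v$ is generated as an $\calO_{L,v}$-module by elements of norm $s$: write $t=\sum a_i x_i$ with $Q(x_i)=s$ and $a_i\in\calO_{L,v}$; since the $x_i$ span $\Lambda_v$ one of them, say $x_1$, is a unimodular vector with $x_1\equiv t\pmod{\gerp\Lambda_v}$ after adjusting — more carefully, because the $x_i$ generate $\Lambda_v$ they generate $\Lambda_v/\gerp\Lambda_v$, so some $\calO_{L,v}$-combination of them is $\equiv t$; but I actually just need one norm-$s$ vector in the given coset, and a Hensel/approximation argument inside $\Lambda_v$ (using that $n\geq 4$ makes $Q$ isotropic enough over $L_v$, as $\gerp\nmid\disc\Lambda$, so the level set $Q^{-1}(s)$ is smooth and one can adjust any vector of the correct class by an element of $\gerp\Lambda_v$ while fixing its norm to be $s$) produces the required $x\in M_v$. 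This smoothness step — showing the norm-$s$ locus meets the coset $t+\gerp\Lambda_v$, i.e. that the congruence condition mod $\gerp$ does not obstruct representing $s$ — is the technical crux, and it is exactly where one uses both $n\geq 4$ and coprimality of $s$ to $\disc\Lambda$.

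Granting the local verification, Theorem~\ref{thm: coset local global} yields $x\in M$ with $Q(x)=s$ for every sufficiently large-norm $s\in S$ satisfying the local conditions; picking any one such $s$ (which exists since $\{N_{L/\QQ}(s)\}$ is infinite and the local conditions hold for \emph{all} $s\in S$) gives the desired element of $M\cap\Lambda'$. This handles every nonzero coset class $t+\gerp\Lambda$; the zero class is handled by noting $\Lambda'$ is full rank, so it contains \emph{some} vector, and in fact one can take $t$ ranging over a set of representatives of $(\Lambda/\gerp\Lambda)\setminus\{0\}$ — these together with $\gerp\Lambda'\subseteq\gerp\Lambda$ already show $\Lambda'+\gerp\Lambda=\Lambda$, which is all Nakayama needs. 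Running this over the finitely many bad $\gerp$ gives $\Lambda'=\Lambda$, completing the proof. The main obstacle, as indicated, is the clean verification of the local representability of $s$ by the coset $M_v$ at the distinguished prime $\gerp$; everything else is a formal descent via Nakayama combined with citing Theorem~\ref{thm: coset local global} as a black box.
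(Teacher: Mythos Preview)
Your overall Nakayama strategy --- showing that the sublattice $\Lambda'$ generated by norm-$S$ elements surjects onto $\Lambda/\gerp\Lambda$ for every prime $\gerp$ --- matches the paper's, and your reduction to Theorem~\ref{thm: coset local global} is the right move. But there is a genuine gap at exactly the point you flag as the ``technical crux'': the local representability of $s$ by the coset $M_v = t + \gerp\Lambda_v$ at the distinguished place $v$.

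The obstruction is elementary and cannot be Hensel-lifted away. For any $x \in t + \gerp\Lambda_v$ one has $Q(x) \equiv Q(t) \pmod{\gerp}$, since $Q$ is $\ol$-integral and $Q(x)-Q(t)=Q(x-t)+2(t,x-t)$ with $x-t\in\gerp\Lambda_v$. So $M_v$ can represent $s$ only if $s \equiv Q(t) \pmod{\gerp}$. This is a congruence condition on $s$, and nothing in the hypotheses guarantees that $S$ meets the residue class $Q(t) \bmod \gerp$ with arbitrarily large norm --- for instance, with $S = \{\ell^k : k \geq 0\}$ the residues of $S$ mod $\gerp$ lie in the cyclic subgroup of $(\ol/\gerp)^\times$ generated by $\ell$, which typically misses most classes. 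Your appeal to ``$\gerp \nmid \disc\Lambda$'' is also unjustified by the hypotheses, but the argument fails even when that holds: isotropy or smoothness of the norm-$s$ locus over $L_v$ is irrelevant once the residue of $s$ is wrong.

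The paper sidesteps this by a pigeonhole argument: rather than hitting \emph{every} coset of $\gerp\Lambda$, it hits only a \emph{generating} subset $\overline{C} \subset \Lambda/\gerp\Lambda$. For each $s_i$ in a sequence with $N_{L/\QQ}(s_i)\to\infty$, choose a generating set $C_i \subset \Lambda_v$ of norm-$s_i$ vectors; since $\Lambda_v/\gerp\Lambda_v$ has only finitely many subsets, some image $\overline{C}$ occurs for an infinite subsequence $(s_{i_j})$. Then for each $\overline{t} \in \overline{C}$ and every $j$, the coset $t + \gerp\Lambda_v$ contains a norm-$s_{i_j}$ vector \emph{by construction}, so the local condition at $v$ is satisfied for infinitely many $s\in S$, and Theorem~\ref{thm: coset local global} kicks in for some large $j$. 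Since $\overline{C}$ generates $\Lambda/\gerp\Lambda$, this is enough for the Nakayama step.
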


\begin{rmk}
	The condition that $n$ be even can likely be removed; see Remark \ref{rmk: why n even}.
\end{rmk}

\begin{proof}
	Let $s_1,s_2,\ldots$ be a sequence of elements of $S$ with $N_{L/\QQ}(s_i)\to \infty$. Let $\gerp$ be a prime of $\ol$ with corresponding place $v$. For each $i=1,2,\ldots$, let $C_i\subset \Lambda_v$ be a generating set of $\Lambda_v$ consisting of elements $x$ with $Q(x) =s_i$. Since there are only finitely many subsets of $\Lambda_v/\gerp\Lambda_v$ but infinitely many $C_i$, there must exist an infinite sequence $i_1,i_2,\ldots$ such that $C_{i_j}$ have the same image $\overline{C}\subset\Lambda_v/\gerp\Lambda_v\simeq \Lambda/\gerp\Lambda$ for all $j$.
	
	Fix some element $\overline{t}\in\overline{C}$, and let $t\in\Lambda$ be a preimage of $\overline{t}$ under reduction mod $\gerp\Lambda$. Then $M= t + \gerp \Lambda$ is a lattice coset with modulus $\gerp\Lambda$, and $M_v$ is the preimage of $\overline{t}$ under the map $\Lambda_v\to\Lambda_v/\gerp\Lambda_v$. For all $j$, there exists an element of  $C_{i_j}$ in $M_v$; it has norm $s_{i_j}$. Further, for all finite places $w\neq v$, we have $M_w=\Lambda_w$, and $\Lambda_w$ has an element of norm $s_{i_j}$ by assumption. Thus by Theorem~\ref{thm: coset local global}, there exists an element of $M$ with norm $s_{i_j}$ for some sufficiently large $j$.   
	Repeating this for each $\overline{t}\in\overline{C}$, we find that there exists a set $D\subset \Lambda$ consisting of elements with norm in~$S$ such that $D$ maps surjectively onto $\overline{C}$ under reduction mod $\gerp\Lambda$. Since $\overline{C}$ generates $\Lambda/\gerp\Lambda$, the lattice generated by $D$ maps surjectively onto $\Lambda/\gerp\Lambda$.
	
	We have shown that for all primes $\gerp$, the set of elements in $\Lambda$ with norm in $S$ generate a lattice that surjects onto $\Lambda/\gerp\Lambda$. This implies that the set of elements in $\Lambda$ with norm in $S$ generate~$\Lambda$.
\end{proof}

\id As a consequence we obtain the desired theorem for endomorphism rings of ssAVs with RM.

\begin{thm}\label{thm: End generation totally real}
	Let $\uA$ be a ssAV with RM by $\ol$ defined over a finite field $k$ with characteristic $p$. Let $\ell\neq p$ be a rational prime that is inert in $L$. Then $\End(\uA)$ is generated as an $\ol$-module by elements with norm in $\{\ell^k:k\geq 0\}$.
\end{thm}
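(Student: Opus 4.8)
The plan is to deduce Theorem~\ref{thm: End generation totally real} from Theorem~\ref{thm: totally real local-global generators} applied to the quadratic $\ol$-lattice $(\End(\uA), \deg_L)$, exactly as Theorem~\ref{thm: generation of orders} was deduced from Theorem~\ref{thm: special generators for a lattice}. First I would reduce to the case $\uA = E\otimes_\ZZ\ol$ of Example~\ref{ex:E tensor OL}: by the bootstrapping argument already given on page~\pageref{eqn: 9793}, for any ssAV with RM $\uA$ there is an isogeny $h\colon \uA\to \underline{E\otimes\ol}$ and a return isogeny $\tilde h$ with $\tilde h h = [\ell^r]$, and conjugation $f\mapsto \tilde h f h$ sends $\End(E\otimes\ol)$ into $\End(\uA)$ with $\deg_L(\tilde h f h) = \ell^{2r}\deg_L(f)$; so a generating set of $\End(E\otimes\ol)$ by elements of $\ell$-power degree maps to a finite-index sublattice of $\End(\uA)$ generated by elements of $\ell$-power degree. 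But to get \emph{all} of $\End(\uA)$, rather than just a finite-index sublattice, I would instead apply Theorem~\ref{thm: totally real local-global generators} directly to $\Lambda = \End(\uA)$ once the local hypotheses are verified — the lattice is $\ol$-integral of rank $4$ (an Eichler order of discriminant $p\ol$ in $B_{p,L}$ by \cite{Nicole}), the norm form $\deg_L(f) = f^\dagger f$ is totally positive-definite as explained on page~\pageref{further discussion}, and $n=4$ is even, so the hypotheses of the theorem are of the right shape.

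The substantive point is to check the local generation condition: for $S = \{\ell^k : k\geq 0\}$, for every $k$ and every finite place $v$ of $L$, the completion $\End(\uA)_v$ should be generated as an $\calO_{L,v}$-module by elements of norm $\ell^k$, and moreover each $\ell^k$ must be coprime to $\disc\Lambda$ and totally positive — the latter two are immediate since $\ell\neq p$ is a rational integer, $\ell$ is a unit at all places above $p$ (the only primes dividing the discriminant of an Eichler order of level dividing $p$), and positive rational integers are totally positive. For the generation at a finite place $v$ lying over a rational prime $q$: when $q\neq p$ the algebra $B_{p,L}\otimes_L L_v \cong M_2(L_v)$ is split and $\End(\uA)_v$ is conjugate to $M_2(\calO_{L,v})$ with $\deg_L = \det$ — one writes down an explicit $\calO_{L,v}$-basis of matrices of determinant $\ell^k$, e.g. adapting the matrix $\diag(1,\ell^k)$ together with elementary matrices (this is the totally-real analogue of the "obvious" check in \S\ref{subsec:connected and expansion}A and in the deduction of Theorem~\ref{thm: generation of orders}, only now over $\calO_{L,v}$ rather than $\ZZ_r$). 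When $q = p$, since $p$ is unramified in $L$, the completion $L_v$ is an unramified extension of $\QQ_p$ and $B_{p,L}\otimes L_v$ is the ramified quaternion algebra over $L_v$ precisely when $[L_v:\QQ_p]$ is odd (by the base-change rule recalled in \S\ref{sec: quat alg basics}); in the ramified case $\End(\uA)_v$ is the unique maximal order, of the explicit shape \eqref{eq:ramified_maxorder}, and one solves $aa^\sigma - \pi_v b b^\sigma = \ell^k$ for a basis — which has a solution because $\ell^k$ is a norm (it is even a unit times a square in $L_v$ if one wishes, but more robustly, solving $aa^\sigma = \ell^k$ with $b=0$ already works since $\ell^k\in\calO_{L_v}^\times$ and the norm map from the units of the quadratic unramified extension is surjective onto the units), and in the split case over $L_v$ one proceeds as for $q\neq p$. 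In all cases one produces four elements of norm $\ell^k$ forming an $\calO_{L,v}$-basis; since $B_{p,L}$ may force $\End(\uA)$ to be a non-maximal Eichler order, I would double-check the generation at the (possibly) ramified-but-split places $v\mid p$ where $\End(\uA)_v$ is a proper Eichler suborder of $M_2(\calO_{L,v})$, adjusting the explicit basis of determinant-$\ell^k$ matrices to lie in the Eichler order (this is a finite, mechanical adjustment of the kind done in \S\ref{subsubsec: ideal classes}).

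The main obstacle, then, is not conceptual but is the explicit local computation at the primes above $p$ in the Eichler (non-maximal) case — making sure that one genuinely has an $\calO_{L,v}$-\emph{basis} of norm-$\ell^k$ elements inside the Eichler order, not merely a generating set of a proper sublattice. Everything else (total positivity, coprimality to the discriminant, the $n=4$ even hypothesis, the split-place computation, and the passage from $E\otimes\ol$ to general $\uA$) is routine given the machinery already in place. I would organize the write-up as: (1) record that $(\End(\uA),\deg_L)$ is an integral, totally positive-definite $\ol$-lattice of even rank $4$ with discriminant dividing $p\ol$; (2) verify the local generation hypothesis place-by-place as above; (3) invoke Theorem~\ref{thm: totally real local-global generators} with $S=\{\ell^k:k\geq 0\}$ to conclude.
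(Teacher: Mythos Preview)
Your overall strategy coincides with the paper's: apply Theorem~\ref{thm: totally real local-global generators} directly to $\Lambda=\End(\uA)$ with $S=\{\ell^k:k\geq 0\}$, after verifying total positivity, coprimality of $S$ to $\disc\Lambda=p\ol$, and the local generation hypothesis. The difference lies entirely in the local verification. You attempt a place-by-place case analysis (split $v\nmid p$; ramified $v\mid p$ when $[L_v:\QQ_p]$ is odd; Eichler $v\mid p$ when $[L_v:\QQ_p]$ is even), and you correctly flag the Eichler case as the nontrivial one, though you leave it as a ``mechanical adjustment'' rather than actually writing down the basis. The paper sidesteps this case analysis altogether: since $\End(\uA)$ and $\End(\underline{E\otimes\ol})=\calO\otimes_\ZZ\ol$ lie in the same genus (\S\ref{sec: ssg for tot real}), for every finite $v$ above a rational prime $q$ one has $\End(\uA)_v\cong\calO_q\otimes_{\ZZ_q}\calO_{L,v}$, where $\calO_q$ is the completion at $q$ of a maximal order $\calO\subset B_{p,\infty}$. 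The paper then invokes \cite[\S 2.3]{GL}, which already produced a $\ZZ_q$-basis $\{v_1,\dots,v_4\}$ of $\calO_q$ with each $\Nm(v_i)=s$; tensoring with $1\in\calO_{L,v}$ immediately yields an $\calO_{L,v}$-basis of $\End(\uA)_v$ of norm~$s$. This single move handles all finite places uniformly --- including the Eichler places above $p$ that you singled out --- because the tensor-product description is insensitive to whether $\calO_q\otimes\calO_{L,v}$ happens to be maximal in $B_{p,L}\otimes L_v$. So your route would succeed with some additional explicit work, but the genus-plus-tensor observation is precisely what reduces the local check to the $\ZZ_q$-computation already done for Theorem~\ref{thm: generation of orders} and removes the obstacle you identified.
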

\begin{proof}
	As discussed in \S\ref{subsec: structure of RM isog graph}, $\uA$ is isogenous to $\underline{E\otimes\calO}_L$ for some supersingular elliptic curve $E/k$. As explained in \S\ref{sec: ssg for tot real}, this implies that the rank $4$ $\ol$-lattices $\End(\underline{E\otimes\calO}_L)$ and $\End(\uA)$ are in the same genus, so for all finite places $v$ of $L$ lying above a finite prime $q$, $\End(\uA)_v\simeq \calO_q\otimes_{\ZZ_q}\calO_{L,v}$. In \cite[Section 2.3]{GL} we proved that for all $s\in S$, $\calO_q$ has a $\ZZ$-basis consisting of elements of norm $s$. These same elements (tensored with $1\in \calO_{L,v}$) form a $\calO_{L,v}$-basis of $\End(\uA)_v$ consisting of elements with norm $s\otimes 1^2=s$. As every element of $S$ is coprime to $\disc \End(\uA)=p\ol$, we may apply Theorem~\ref{thm: totally real local-global generators} to $\Lambda=\End(\uA)$.
\end{proof}

\

\

\

\end{document}